\theoremstyle{plain} 
\newtheorem{theoremintro}{Theorem}
\newtheorem{theoremcounter}{theoremcounter}[section]
\newtheorem{corollary}[theoremcounter]{Corollary}
\newtheorem{lemma}[theoremcounter]{Lemma}
\newtheorem{proposition}[theoremcounter]{Proposition}
\newtheorem{theorem}[theoremcounter]{Theorem}
\theoremstyle{definition}
\newtheorem{definition}[theoremcounter]{Definition}
\newtheorem{remark}[theoremcounter]{Remark}
\numberwithin{equation}{section}
\renewcommand{\theta}{\vartheta}
\renewcommand{\phi}{\varphi}
\renewcommand{\epsilon}{\varepsilon}
\renewcommand{\subset}{\subseteq}
\renewcommand{\supset}{\supseteq}
\newcommand{\N}{\mathbb N}
\newcommand{\NN}{\mathbb N}
\newcommand{\Z}{\mathbb Z}
\newcommand{\C}{\mathbb C}
\newcommand{\CC}{\mathcal C}
\newcommand{\cC}{\mathcal C}
\newcommand{\idpart}{|}
\newcommand{\paarpart}{\sqcap}
\newcommand{\baarpartbaustein}{\rotatebox{180}{$\sqcap$}}
\newcommand{\baarpart}{
\mathrel{\vcenter{\offinterlineskip \hbox{$\baarpartbaustein$}}}}
\newcommand{\upsubset}{\begin{rotate}{90}$\subset$\end{rotate}}
\newcommand{\downsubset}{\begin{turn}{270}$\subset$\end{turn}}
\newcommand{\singleton}{\uparrow}
\newcommand{\vierpart}{
\mathrel{\offinterlineskip
\hskip0ex\hbox{$\sqcap$}\hskip -.35ex\hbox{$\sqcap$} \hskip -0.35ex\hbox{$\sqcap$}}}
\newcommand{\vierpartrot}{
\mathrel{\vcenter{\offinterlineskip
\hbox{$\baarpart$} \vskip -.1ex \hbox{$\shortmid$} \vskip -.1ex \hbox{$\paarpart$}}}}
\newcommand{\crosspart}{
\mathrel{\offinterlineskip
\hskip 0.1ex \hbox{$/$}\hskip -.95ex\hbox{$\backslash$}}}
\newcommand{\fatcrosspart}{
\mathrel{\vcenter{\offinterlineskip
\hbox{$\baarpart\baarpart$} \vskip -0.2ex \hbox{\hskip .75ex $\times$} \vskip -.2ex \hbox{$\paarpart\paarpart$}}}}
\newcommand{\primarypart}{
\mathrel{\vcenter{\offinterlineskip
\hbox{$\baarpart$} \vskip -1.3ex \hbox{\hskip1.3ex$/$\hskip-1.2ex$-$} \vskip -1.2ex \hbox{\hskip2.2ex $\paarpart$}}}}
\newcommand{\midmid}{
\mathrel{\vcenter{\offinterlineskip
\vskip -0.2ex \hbox{$\shortmid$} \vskip -0.75ex \hbox{$\shortmid$}}}}
\newcommand{\halflibpart}{
\mathrel{\offinterlineskip
\hbox{$\bigtimes$}\hskip -1.55ex \hbox{$\midmid$} }}
\newcommand{\liegeblkn}{\begin{turn}{270}$[$\end{turn}}
\newcommand{\liegebalken}{
\mathrel{\vcenter{\offinterlineskip
\vskip -1.3ex \hbox{$\liegeblkn$}}}}
\newcommand{\longpr}{
\mathrel{\offinterlineskip
\hbox{$\shortmid$} \hskip -.7ex \hbox{$\liegebalken$} \hskip -.75ex\hbox{$\shortmid$}}}
\newcommand{\hochlongpr}{
\mathrel{\vcenter{\offinterlineskip
\vskip -.5ex \hbox{$\longpr$}}}}
\newcommand{\longpair}{
\mathrel{\offinterlineskip
\hskip0ex\hbox{$\shortmid$} \hskip -1.45ex\hbox{$\hochlongpr$} \hskip -1.4ex\hbox{$\shortmid$}}}
\newcommand{\legpart}{
\mathrel{\offinterlineskip
\hbox{$\shortmid$} \hskip -0.5ex \hbox{$\longpair$} \hskip -2.5ex \hbox{$\shortmid$} \hskip 0.8ex}}
\newcounter{PartitionDepth}
\newcounter{PartitionLength}
\newcommand{\partii}[3]{
 \begin{picture}(#3,#1)
 \setcounter{PartitionLength}{#3-#2}
 \setcounter{PartitionDepth}{-1-#1}
 \put(#2,\thePartitionDepth){\line(0,1){#1}}     
 \put(#3,\thePartitionDepth){\line(0,1){#1}}
 \put(#2,\thePartitionDepth){\line(1,0){\thePartitionLength}}
 \end{picture}}
\newcommand{\partiii}[4]{
 \begin{picture}(#4,#1)
 \setcounter{PartitionLength}{#4-#2}
 \setcounter{PartitionDepth}{-1-#1}
 \put(#2,\thePartitionDepth){\line(0,1){#1}}
 \put(#3,\thePartitionDepth){\line(0,1){#1}}
 \put(#4,\thePartitionDepth){\line(0,1){#1}}
 \put(#2,\thePartitionDepth){\line(1,0){\thePartitionLength}} 
 \end{picture}}
\newcommand{\upparti}[2]{
 \begin{picture}(#2,#1)
 \setcounter{PartitionDepth}{#1}
 \put(#2,0){\line(0,1){#1}}
 \end{picture}}
\newcommand{\uppartii}[3]{
 \begin{picture}(#3,#1)
 \setcounter{PartitionLength}{#3-#2}
 \setcounter{PartitionDepth}{#1}
 \put(#2,0){\line(0,1){#1}}     
 \put(#3,0){\line(0,1){#1}}
 \put(#2,\thePartitionDepth){\line(1,0){\thePartitionLength}}
 \end{picture}}
\newcommand{\uppartiii}[4]{
 \begin{picture}(#4,#1)
 \setcounter{PartitionLength}{#4-#2}
 \setcounter{PartitionDepth}{#1}
 \put(#2,0){\line(0,1){#1}}
 \put(#3,0){\line(0,1){#1}}
 \put(#4,0){\line(0,1){#1}}
 \put(#2,\thePartitionDepth){\line(1,0){\thePartitionLength}} 
 \end{picture}}
\newcommand{\uppartiv}[5]{
 \begin{picture}(#5,#1)
 \setcounter{PartitionLength}{#5-#2}
 \setcounter{PartitionDepth}{#1}
 \put(#2,0){\line(0,1){#1}}
 \put(#3,0){\line(0,1){#1}}
 \put(#4,0){\line(0,1){#1}}
 \put(#5,0){\line(0,1){#1}}
 \put(#2,\thePartitionDepth){\line(1,0){\thePartitionLength}} 
 \end{picture}}
\newcommand{\Hnfc}{H_n^\diamond}
\DeclareMathOperator{\wdepth}{wdepth}
\DeclareMathOperator{\Hom}{Hom}
\DeclareMathOperator{\mini}{min}
\DeclareMathOperator{\rl}{rl}
\DeclareMathOperator{\Span}{span}
\begin{document}
\title{The full classification of orthogonal easy quantum groups}
\author[Sven Raum and Moritz Weber]{Sven Raum$^{(1)}$ and Moritz Weber}
  \setcounter{footnote}{1}
  \footnotetext{Supported by KU Leuven BOF research grant OT/08/032 and by ANR Grant NEUMANN}
\address{Sven Raum, \'Ecole Normale Sup\'erieure de Lyon, Unit\'e de Math\'ematiques Pures et Appliqu\'ees, UMR CNRS 5669,  69364 Lyon Cedex 07, France}
\email{sven.raum@ens-lyon.fr}
\address{Moritz Weber, Saarland University, Fachbereich Mathematik, Postfach 151150,
66041 Saarbr\"ucken, Germany}
\email{weber@math.uni-sb.de}
\date{\today}
\subjclass[2010]{46L65 (Primary); 46L54, 05E10, 16T30 (Secondary)}
\keywords{Quantum groups, easy quantum groups, noncrossing partitions, free probability}

\begin{abstract}
In 1987, Woronowicz gave a definition of compact matrix quantum groups generalizing compact Lie groups $G\subset M_n(\C)$ in the   setting of noncommutative geometry.
About twenty years later, Banica and Speicher isolated a class of compact matrix quantum groups with an intrinsic combinatorial structure. These so called easy quantum groups are determined by categories of partitions. They have been proven useful in order to understand various aspects of quantum groups, in particular linked with Voiculescu's free probability theory. Furthermore, they exhibit a way to find examples of compact quantum groups besides $q$-deformations and quantum isometry groups.
These characteristics naturally motivated attempts to fully classify them. This is completed in the present article.
\end{abstract}

\maketitle

\section*{Introduction}

In the realm of noncommutative operator algebras, quantum groups are the right notion of ``symmetries''. In 1987, Woronowicz \cite{Woronowicz87} gave a definition of compact matrix quantum groups based on the theory of C$^*$-algebras generalizing compact Lie groups $G\subset M_n(\C)$.
In this setting, 
Wang \cite{Wang95, Wang98} defined quantum versions of the permutation group $S_n\subset M_n(\C)$ and the orthogonal group $O_n\subset M_n(\C)$. The idea is that the free symmetric quantum group $S_n^+$ is given by permutation matrices with noncommutative entries. Likewise, we have operator-valued orthogonal matrices in the free orthogonal quantum group $O_n^+$.
This ``liberation of groups'' to a quantum group setting was further extended and refined by Banica and Speicher in 2009 \cite{BanicaSpeicher09}. They began to study compact matrix quantum groups $G$ with $S_n\subset G\subset O_n^+$ focusing on those with an intrinsic combinatorial structure, the so called easy quantum groups or partition quantum groups. They appear in this liberation framework as very natural objects. By Woronowicz's \cite{Woronowicz88} Tannaka-Krein duality, compact matrix quantum groups in general are determined by their intertwiner spaces. In the case of  easy quantum groups, these intertwiner spaces can be given in terms of partitions of sets of a finite number of points. 

Given $k$ upper and $l$ lower points, we partition this set into several disjoint subsets (called blocks) by drawing lines connecting some of the points. These objects play a central role in Voiculescu's free probability \cite{VoiculescuDykemaNica92} due to the work of Speicher \cite{NicaSpeicher09}.

It quickly turned out that the approach by Banica and Speicher is quite powerful due to several reasons.
\begin{itemize}
 \item[(1)] Easy quantum groups give rise to many new examples (see below) of compact matrix quantum groups $S_n\subset G\subset O_n^+$ besides $q$-deformations and quantum  isometry groups. They can be understood in a very concrete way due to their combinatorics. For instance, their fusion rules may be expressed using partitions \cite{FreslonWeber13, Freslon13}. 
 \item[(2)] Easy quantum groups provide useful symmetries in the context of Voiculescu's free probability \cite{VoiculescuDykemaNica92, NicaSpeicher09}. In fact, while invariance under the group $S_n$ corresponds to (classical) independence in (classical) probability theory, invariance under the quantum group $S_n^+$ gives rise to free independence. This follows from a free de Finetti result by K\"ostler and Speicher \cite{KoestlerSpeicher09} and it has been extended by Banica, Curran and Speicher in a couple of articles \cite{BanicaCurranSpeicher12, Curran09, Curran10, Curran11, CurranSpeicher11a, CurranSpeicher11b}.
 Furthermore, like in free probability the noncrossing partitions play an important role in the theory of easy quantum  groups. For instance, the liberation from $S_n$ to $S_n^+$ (as well as from $O_n$ to $O_n^+$) is reflected by restricting to noncrossing partitions.  Hence, we have a kind of a synthesis of ideas in free probability and easy quantum groups which give natural explanations of certain phenomena.
 \item[(3)] By Woronowicz \cite{Woronowicz87}, compact matrix quantum groups admit a Haar state, analogous to the Haar measure on compact groups. This yields a canonical GNS-representation  and we obtain von Neumann algebra versions of compact matrix quantum groups. Von Neumann algebras associated with easy quantum groups have been studied by many authors, see for instance \cite{VaesVergnioux07, Brannan12a, Brannan12b, Freslon12, Isono12}.
 Other operator algebraic aspects such as $K$-theory \cite{Voigt11, VergniouxVoigt11} or C$^*$-algebraic properties \cite{Banica96, Banica97}
 have been investigated.
\end{itemize}
The work on easy quantum groups has been done by Banica, Curran, Speicher and many others. In addition to the above articles, we would like to mention \cite{BanicaCurranSpeicher10, BanicaCurranSpeicher11, Raum11, BanicaBichonCollinsCurran12, RaumWeberSimpl, RaumWeberSemiDirect, Weber13}
and related articles
\cite{BanicaVergnioux09, BanicaVergnioux10, VanDaeleWang96, BanicaSkalski11, BanicaBelinschiCapitaineCollins07, BanicaCollinsZinn-Justin09, BichonDubois-Violette13}.
Let us also mention the important preparatory work by Banica, Bichon and Collins \cite{Bichon03, BanicaCollins07, BanicaBichonCollins07}. These lists are not guaranteed to be complete, but they might help to get an overview on the subject.
For first steps to extend the easy quantum group setting from quantum subgroups of $O_n^+$ to quantum  subgroups of $U_n^+$, we refer to \cite{TarragoWeber13}.

It is desirable to have a complete list of all easy quantum groups due to the following reasons. Via the de Finetti theorems -- as well as  by their laws of characters --, they shed a light onto aspects of noncommutative distributions in free probability. Furthermore, while classifying easy quantum groups, the authors discovered a natural class of compact quantum groups (not only easy ones!) which can be completely classified and understood as semi-direct products, \cite{RaumWeberSemiDirect}. We hope that this success can be repeated for another class continuing the work of the present article.

Most of the classification process can be performed on the level of combinatorics. The question is to find all categories $\CC$ of partitions, i.e. all sets of partitions which are closed under ``intertwiner space like'' operations. In the initial article of Banica and Speicher \cite{BanicaSpeicher09} and later in \cite{Weber13} all free easy quantum groups were classified, i.e. all categories of noncrossing partitions. There are exactly seven of them, containing $S_n^+$, $O_n^+$ and the free hyperoctahedral quantum group $H_n^+$ of Banica, Bichon and Collins \cite{BanicaBichonCollins07}.
The case of those easy quantum groups which are actually groups was treated in \cite{BanicaSpeicher09}. They correspond to categories containing the partition $\crosspart\in\CC$ (which reflects that the underlying C$^*$-algebra is commutative), and there are exactly six. In \cite{BanicaSpeicher09} and \cite{BanicaCurranSpeicher10}, halfliberated quantum groups were introduced. Here, the commutativity relations $ab=ba$ on the generators of their C$^*$-algebras are replaced by $abc=cba$, corresponding to the partition $\halflibpart$. They were completely classified in \cite{Weber13}.

When aiming at a full classification of all easy quantum groups, it turns out that they split into three classes. We say that a category $\CC$ of partitions is hyperoctahedral, if the four block partition $\vierpart$ is contained in $\CC$ but the double singleton $\singleton\otimes\singleton$ is not. The non-hyperoctahedral case was treated in \cite{BanicaCurranSpeicher10} and \cite{Weber13}, whereas the class of so called group-theoretical hyperoctahedral categories has been studied in \cite{RaumWeberSimpl} and \cite{RaumWeberSemiDirect}. For the remaining case, we obtain a complete classification in the present article using the following partitions $\pi_k$  given by $k$ four blocks on $4k$ points:
\[\pi_k=a_1\ldots a_ka_k\ldots a_1a_1\ldots a_ka_k\ldots a_1\]

\begin{theoremintro}[Theorem \ref{ThMain}]
 Let $\cC$ be a non group-theoretical hyperoctahedral category. Then $\CC=\langle\pi_l,l\leq k\rangle$ for some $k\in\NN$ or $\CC=\langle\pi_l,l\in \NN\rangle$. 
\end{theoremintro}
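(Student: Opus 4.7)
The overall strategy is to first pin down which $\pi_k$ lie in $\CC$, and then show that these alone generate $\CC$. Set $K = \{k \in \NN : \pi_k \in \CC\}$. Since $\CC$ is hyperoctahedral, $\pi_1 = \vierpart \in \CC$, so $1 \in K$. The theorem amounts to showing (a) $K$ is of the form $\{1, \dots, k_0\}$ for some $k_0$, or $K = \NN$, and (b) $\CC = \langle \pi_l : l \in K \rangle$.

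For (a), the plan is to show that $K$ is closed downward: if $\pi_k \in \CC$ then $\pi_{k-1} \in \CC$. This should follow from a direct computation with the standard category operations — rotating $\pi_k$ appropriately and then capping two adjacent legs that belong to the innermost block via the pair partition $\paarpart$ will collapse one of the four-blocks of $\pi_k$ and, after further rotation, produce $\pi_{k-1}$. Such a ``reduction by capping'' is a routine manipulation.

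For (b), the inclusion $\langle \pi_l : l \in K\rangle \subseteq \CC$ is immediate. The hard containment is $\CC \subseteq \langle \pi_l : l \in K\rangle$, and this is where the non group-theoretical hypothesis must enter. By the classification of group-theoretical hyperoctahedral categories in \cite{RaumWeberSimpl} and \cite{RaumWeberSemiDirect}, such categories are characterized by a specific closure/group-like condition on the block structure of their partitions. Non group-theoreticality of $\CC$ therefore gives a witness $p_0 \in \CC$ violating this condition, which in turn furnishes an additional ``mixing'' move (a rearrangement of letters inside blocks) that is unavailable in group-theoretical categories. The plan is to use this mixing move, combined with hyperoctahedrality (all blocks have even size, as $\singleton\otimes\singleton\notin\CC$) and the $\pi_l$'s already in $\CC$, to normalize an arbitrary $p \in \CC$.

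The core combinatorial lemma then is that every $p\in\CC$ can be rewritten, via the category operations applied to elements of $\{\pi_l : l \in K\}$, into its palindromic normal form, which by inspection is a composition and tensor product of the $\pi_l$'s. One would proceed by induction on a suitable complexity measure (e.g.\ total number of points, or number of ``non-palindromic'' block patterns), using the mixing move to repeatedly reduce the local structure around each block to the signature $a_1\dots a_k a_k\dots a_1 a_1\dots a_k a_k\dots a_1$.

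The main obstacle will be the last reduction, and specifically the fact that the induction must not generate a $\pi_l$ with $l \notin K$: one needs to show that the maximal $k$ with $\pi_k \in \CC$ genuinely bounds the size of every four-block pattern occurring in any $p\in\CC$. In other words, the largest $\pi_k$ present controls the combinatorial complexity of the entire category. Establishing this bound — rather than the existence of \emph{some} decomposition — will require careful tracking of how the non group-theoretical mixing move interacts with the palindromic structure of the $\pi_k$'s, and is the delicate heart of the argument.
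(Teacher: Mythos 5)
Your high-level outline --- determine which $\pi_k$ lie in $\CC$, check downward closure, then show these generate --- matches the shape of the paper's argument, and part (a) is indeed routine. But there is a genuine gap in part (b), and it starts with a misreading of how the non group-theoretical hypothesis enters. Group-theoretical means $\primarypart\in\CC$, so \emph{non} group-theoretical is a \emph{forbidding} hypothesis, not a source of an extra ``mixing move'' or of a witness partition $p_0$: there is no additional move available in $\CC$; rather, certain patterns are outlawed. Concretely, the paper shows (Lemma \ref{LemABA}) that if $\CC$ contains any partition of the form $ab^kaX$ with $k$ odd and $a\neq b$, then already $\primarypart\in\CC$; hence in a non group-theoretical hyperoctahedral category no partition may contain such a pattern. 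This yields the Doubling Lemma (Lemma \ref{LemABA2}): in any $p\in\CC$, every letter occurring strictly between two occurrences of a letter $a$ occurs there an even number of times. It is this rigidity --- every partition in $\CC$ is a system of nested pairs, i.e.\ carries a Dyck-path structure in which some pairs are linked into larger blocks --- that makes normalization possible at all. Your proposed mechanism (a witness violating a ``group-like closure condition'' furnishing a rearrangement move) runs in the opposite direction and would not produce these constraints.

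The second, and decisive, missing ingredient is the complexity measure you correctly identify as the delicate heart but do not supply. The paper introduces $\wdepth(p)$, the maximal $k$ such that a rotation of $p$ contains a ``W of depth $k$'': four runs $a_1\cdots a_k$, $a_k\cdots a_1$, $a_1\cdots a_k$, $a_k\cdots a_1$ interleaved with noise subject to parity conditions. Two lemmas then do all the work: (i) if some $p\in\CC$ has $\wdepth(p)=k$, then $\pi_k\in\CC$ (take a length-minimal such $p$ and use the Doubling Lemma to strip away all noise, showing $p=\pi_k$); and (ii) if $p\in\CC$ is in single-double leg form with $\wdepth(p)\le k$, then $p\in\langle\pi_l,l\le k\rangle$ (induction on length, commuting a pair $aa$ past ``skew V's'' of bounded depth using rotated copies of the $\pi_{t+1}$). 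Lemma (i) is exactly what guarantees that your normalization never needs a $\pi_l$ with $l\notin K$: any partition whose nesting structure is ``as deep as $\pi_k$'' already forces $\pi_k\in\CC$. Without such an invariant, and without the correct use of $\primarypart\notin\CC$ as a structural restriction, the reduction you sketch cannot be carried out.
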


We also write $\langle\pi_l,l\leq \infty\rangle$ for $\langle\pi_l,l\in \NN\rangle$.
We deduce the following full classification of all (orthogonal) easy quantum groups.

\begin{theoremintro}[Theorem \ref{ThMain2}]
If $G$ is an orthogonal easy quantum group, its corresponding category of partitions
\begin{itemize}
\item[(i)] either is non-hyperoctahedral (and hence it is one of the 13 cases of \cite{Weber13}),
\item[(ii)] or it is hyperoctahedral and contains $\primarypart$ and hence is group-theoretical (see \cite{RaumWeberSimpl}, \cite{RaumWeberSemiDirect}).
\item[(iii)] or it coincides with $\langle\pi_l,l\leq k\rangle$ for some $k\in\{1,2,\ldots,\infty\}$.
\end{itemize}
\end{theoremintro}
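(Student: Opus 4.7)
The plan is to combine the trichotomy on categories of partitions introduced in the paper with three existing classification results: the non-hyperoctahedral classification of \cite{Weber13}, the group-theoretical hyperoctahedral classification of \cite{RaumWeberSimpl, RaumWeberSemiDirect}, and the main combinatorial theorem (Theorem \ref{ThMain}) that the excerpt has just stated. By Woronowicz's Tannaka--Krein duality, orthogonal easy quantum groups are in bijection with categories of partitions, so it is enough to classify the latter.

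First I would split the argument along the presence or absence of the four-block partition $\vierpart$ and of the double singleton $\singleton \otimes \singleton$. If $\vierpart \notin \CC$, or if $\vierpart \in \CC$ and $\singleton\otimes\singleton \in \CC$, then $\CC$ is by definition non-hyperoctahedral. In the first subcase I would quote the non-hyperoctahedral classification of \cite{Weber13}; in the second I would first observe that $\singleton\otimes\singleton$ together with $\vierpart$ allows one to generate $\singleton\otimes\singleton$ through any block, reducing this situation to the previously treated part of the non-hyperoctahedral list. This takes care of case (i).

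Assume now that $\CC$ is hyperoctahedral, i.e.\ $\vierpart \in \CC$ and $\singleton\otimes\singleton \notin \CC$. I would then branch on whether $\primarypart \in \CC$. If yes, $\CC$ is group-theoretical hyperoctahedral and is covered by the classification in \cite{RaumWeberSimpl, RaumWeberSemiDirect}, giving case (ii). If no, then $\CC$ is a non group-theoretical hyperoctahedral category and Theorem \ref{ThMain} directly identifies it as $\langle \pi_l, l \leq k\rangle$ for some $k \in \{1,2,\ldots,\infty\}$, which is case (iii).

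The main obstacle is not in the branching itself, which is a clean case distinction, but in making sure the three branches really partition the space of orthogonal easy quantum groups without overlaps or gaps. I would have to check: that the non-hyperoctahedral case is indeed captured in its entirety by \cite{Weber13} (13 categories), that ``group-theoretical hyperoctahedral'' as used in \cite{RaumWeberSimpl, RaumWeberSemiDirect} coincides with the condition $\primarypart \in \CC$ (plus hyperoctahedrality), and that the hypothesis ``non group-theoretical hyperoctahedral'' in Theorem \ref{ThMain} is exactly the remaining case. Once these identifications are in place, the three parts of the statement are obtained by concatenating the respective classifications, and passing back from categories of partitions to easy quantum groups via Tannaka--Krein.
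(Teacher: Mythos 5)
Your proposal is correct and follows essentially the same route as the paper: Theorem \ref{ThMain2} is obtained there precisely as a corollary of Theorem \ref{ThMain} by the trichotomy non-hyperoctahedral / group-theoretical hyperoctahedral / non group-theoretical hyperoctahedral, with the first two branches delegated to \cite{Weber13} and \cite{RaumWeberSimpl, RaumWeberSemiDirect}. The consistency checks you list are settled by the paper's definitions (hyperoctahedral means $\vierpart\in\CC$ and $\singleton\otimes\singleton\notin\CC$; group-theoretical means $\primarypart\in\CC$), so the three cases are exhaustive as required.
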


The idea of the proof is the following. Let $\CC$ be a hyperoctahedral category of partitions
with $\primarypart\notin\CC$. If $\CC$ contains a partition which looks like a ``disturbed'' version of $\pi_k$, then we already have $\pi_k\in\CC$ and hence $\langle\pi_l,l\leq k\rangle\subset \CC$. On the other hand, every partition in $\CC$ actually \emph{is} such a disturbed version -- and all these partitions may be constructed inside of $\langle\pi_l,l\leq k\rangle$. Thus, $\CC=\langle\pi_l,l\leq k\rangle$ for some $k$.

Section \ref{SectEasyQG} is devoted to give a concise introduction into easy quantum groups. In Section \ref{SectHistory} we review the history of the classification of easy quantum groups. Section \ref{SectDefWeak} provides some technical background and the information that there is no category in between $\langle\vierpart\rangle$ and $\langle\pi_2\rangle$. In Section \ref{SectStructure}, the structure of categories $\CC$ in the hyperoctahedral case with $\primarypart\notin\CC$ is studied. This provides the main tools for Section \ref{SectWeakSimpl}, where we actually prove our classification result by purely combinatorial means.
We end with a study of the C$^*$-algebras associated to the quantum groups coming from the categories $\langle\pi_l,l\leq k\rangle$.
Note that passing to the C$^*$-algebras is necessary for distinguishing the found categories of partitions.

\section*{Acknowledgements}

The authors thank Stephen Curran for discussions on categories of partitions. He pointed out the existence of the category $\langle\fatcrosspart\rangle$ which led us to the discovery of the series $\langle \pi_l,l\leq k\rangle$.

\section{Easy quantum groups}\label{SectEasyQG}

In this section, we give a complete introduction into easy quantum groups. Further details may be found in \cite{BanicaSpeicher09} or in \cite{Weber13}.

\subsection{Compact matrix quantum groups}\label{SectCMQG}

In 1987, Woronowicz (\cite{Woronowicz87} and \cite{Woronowicz91}) gave a definition of a \emph{compact matrix pseudogroup}, today also called \emph{compact matrix quantum group}.

\begin{definition}\label{DefCMQG}
 A \emph{compact matrix quantum group} (of dimension $n\in\N$) is given by
 \begin{itemize}
  \item a unital $C^*$-algebra $A$,
  \item elements $u_{ij}\in A$, $1\leq i,j\leq n$ that generate $A$ as a $C^*$-algebra,
  \item and a *-homomorphism $\Delta: A\to A\otimes_{\mini} A$, mapping $u_{ij}\mapsto \sum_{k=1}^n u_{ik}\otimes u_{kj}$.
  \item Furthermore, we require the $n\times n$-matrices $u=(u_{ij})$ and $u^t=(u_{ji})$ to be invertible.
 \end{itemize}
\end{definition}

Note that every compact matrix quantum group is a compact quantum group in the sense of \cite{Woronowicz98}. If $G\subset M_n(\C)$ is a compact group, the algebra of continuous functions $C(G)$ on $G$ can be viewed as a compact  matrix quantum group. The generators $u_{ij}$ are given by the coordinate functions evaluating the matrix entries. The map $\Delta$ is given by the above formula which dualizes matrix multiplication. More explicitly:
\begin{align*}
&\Delta: C(G)\to C(G)\otimes  C(G)\cong  C(G\times G)\\
&\Delta(f)(g,h):=f(gh), \quad \forall g,h\in G
\end{align*}
Hence the notion of a compact matrix quantum group generalizes compact matrix groups. It is therefore convenient to write $C(G)$ for the $C^*$-algebra $A$ in Definition~\ref{DefCMQG} -- even in the case when $A$ is noncommutative -- and to think of $G$ as the quantum group. Note that the object $G$ is defined in a precise way only via its associated C$^*$-algebra $C(G)$ and the comultiplication $\Delta$.

In our setting, we will treat compact matrix quantum groups which are given by universal $C^*$-algebras generated by self-adjoint elements $u_{ij}$ and the relations that $u$ and $u^t$ are orthogonal matrices plus some further relations.

If $G$ is a compact matrix quantum group of dimension $n$ and $u$ is the matrix imposed by the generators $u_{ij}$, the matrix $u^{\otimes k}\in  M_{n^k}(C(G))$ gives rise to a corepresentation of the quantum group. We associate the \emph{intertwiner spaces} to $G$, given by the collection of all sets:
\[\Hom_G( k,l)=\{T:(\C^n)^{\otimes k}\to (\C^n)^{\otimes l} \textnormal{ linear} \; | \; Tu^{\otimes k}=u^{\otimes l}T\},\quad k,l\in\N_0\]
It is an important consequence of Woronowicz's Tannaka-Krein duality (\cite{Woronowicz88}) that a compact matrix quantum group can be reconstructed from its intertwiner space (which is a tensor category). Hence, there is a duality between compact matrix quantum groups and tensor categories, which will be used in the sequel.

\subsection{Partitions}\label{SectPart}

In order to describe intertwiner spaces of quantum groups combinatorially, Banica and Speicher \cite{BanicaSpeicher09} used the concept of partitions.  A \emph{partition} $p$ is given by $k\in\N_0$ upper points and $l\in\N_0$ lower points which may be connected by lines. By this, the set of $k+l$ points is partitioned into several \emph{blocks}. We write a partition as a diagram in the following way using lines to represent the blocks:
\setlength{\unitlength}{0.5cm}
\begin{center}
  \begin{picture}(14,5)    
    \put(0,1){$\cdot$}
    \put(1,1){$\cdot$}
    \put(2,1){$\cdot$}
    \put(3,1){$\cdot$}
    \put(4.5,1){$\dotsc$}
    \put(6,1){$\cdot$}

    \put(3,2.5){$p$}

    \put(0,4){$\cdot$}
    \put(1,4){$\cdot$}
    \put(2,4){$\cdot$}
    \put(3,4){$\cdot$}
    \put(4.5,4){$\dotsc$}
    \put(6,4){$\cdot$}
    
    \put(8,2.5){\begin{minipage}{3.5cm} $k$ upper points and \\ $l$ lower points \end{minipage}}
  \end{picture}
\end{center}

Three examples of such partitions are the following diagrams:
\setlength{\unitlength}{0.5cm}
\begin{center}
  \begin{picture}(10,3)
    \put(0,0){\line(0,1){2}}
    \put(1,0){\line(0,1){2}}
    \put(0,1){\line(1,0){1}}

    \put(4,0){\line(0,1){0.5}}
    \put(5,0){\line(0,1){1}}
    \put(4,2){\line(0,-1){1}}
    \put(5,2){\line(0,-1){0.5}}
    \put(4,1){\line(1,0){1}}

    \put(8,0){\line(1,2){1}}
    \put(8,2){\line(1,-2){1}}
  \end{picture}
\end{center}
In the first example, all four points are connected, and the partition consists only of one block.  In the second example, the left upper point and the right lower point are connected, whereas none of the two remaining points is connected to any other point. The third partition consists of two blocks connecting the left upper and the right lower point, and similarly the right upper and the left lower point, respectively.

The set of partitions on $k$ upper and $l$ lower points is denoted by $P(k,l)$, and the set of all partitions is denoted by $P$. If $k = l = 0$, then $P(0, 0)$ consists only of the \emph{empty partition}  $\emptyset$. 
A partition $p \in P(k,l)$ is called \emph{noncrossing}, if it can be drawn in such a way that none of its lines cross; we then write $p\in NC(k,l)$. In the above examples, the first two partitions are noncrossing whereas the third is not.

When we need to refer to points of a partition, we may label the upper points by the numbers $1, \dots, k$ from left to right and likewise $1', \dots, l'$ for the lower points.

A few partitions play a special role in this article, and they are listed here:
\begin{itemize}
 \item The \emph{singleton partition} $\singleton$ is the partition in $P(0,1)$ on a single lower point.
 \item The \emph{double singleton partition} $\singleton\otimes\singleton$ is the partition in $P(0,2)$ on two non-connected lower points.
 \item The \emph{pair partition} (also called \emph{duality partition}) $\paarpart$ is the partition in $P(0,2)$ on two connected lower points.
 \item The \emph{identity partition} (also called \emph{unit partition}) $\idpart$ is the partition in $P(1,1)$ connecting one upper with one lower point.
 \item The \emph{four block partition} $\vierpart$ is the partition in $P(0,4)$ connecting four lower points.
 \item The \emph{s-mixing partition} $h_s$ is the partition in $P(0,2s)$ for $s\in \N$ given by two blocks connecting the $2s$ points in an alternating way:
\setlength{\unitlength}{0.5cm}
\begin{center}
\begin{picture}(13,4)
\put(-1,1.5){$h_s\;=$}
\put(-0.1,1){\uppartiii{2}{1}{3}{5}}
\put(5,3){\line(1,0){1}}
\put(6.75,2.95){$\ldots$}
\put(-0.1,1){\uppartiii{1}{2}{4}{6}}
\put(6,2){\line(1,0){1}}
\put(7.75,1.95){$\ldots$}
\put(-0.1,1){\uppartii{1}{10}{12}}
\put(8.65,3){\line(1,0){0.5}}
\put(-0.1,1){\uppartii{2}{9}{11}}
\put(9.5,2){\line(1,0){1}}
\end{picture}
\end{center}
 \item The \emph{crossing partition} (also called \emph{symmetry partition}) $\crosspart$ is the partition in $P(2,2)$ connecting the upper left with the lower right point, as well as the upper right point with the lower left one. It is the partition of two crossing pair partitions.
 \item The \emph{fat crossing partition} $\fatcrosspart\in P(4,4)$ is given by two crossing  four blocks $\{1,2,3',4'\}$ and $\{3,4,1',2'\}$:
\setlength{\unitlength}{0.5cm}
 \begin{center}
 \begin{picture}(5,7)
 \put(1,5.5){1}
 \put(2,5.5){2}
 \put(3,5.5){3}
 \put(4,5.5){4} 
 \put(1,0){$1'$}
 \put(2,0){$2'$}
 \put(3,0){$3'$}
 \put(4,0){$4'$}
 \thicklines
 \put(0,6){\partii{1}{1}{2}}
 \put(0,6){\partii{1}{3}{4}}
 \put(1.8,2){\line(1,1){2}}
 \put(1.8,4){\line(1,-1){2}}
 \put(0,1){\uppartii{1}{1}{2}}
 \put(0,1){\uppartii{1}{3}{4}}
 \end{picture}
 \end{center}
 \item The \emph{half-liberating partition} $\halflibpart$ is the partition in $P(3,3)$ given by three blocks  $\{1,3'\}$, $\{2,2'\}$ and $\{3,1'\}$.
 \item The \emph{pair positioner partition} $\primarypart\in P(3,3)$ is given by a four block $\{1,2,2',3'\}$ and a pair $\{3,1'\}$.
\end{itemize}

We will also label partitions with letters and use a correspondence of partitions and words. To a partition $p\in P(0,n)$ with $k$ blocks we assign a word in the letters $a_1,\ldots, a_k$ by labelling all points of the first block by $a_1$, all points of the second block by $a_2$ and so on. Conversely, a word of length $n$ in $k$ letters gives rise to a partition $p\in P(0,n)$ given by connecting all equal letters by lines. As an example, the above partition $h_s$ can be seen as the word $w=ababab\ldots ab$ of length $2s$.
We say that $X$ is a \emph{subword} of the partition $p$, if $X$ is a word
\[X=a_{i(r)}a_{i(r+1)}\ldots a_{i(r+s)}\]
and $p$ is of the form:
\[p=a_{i(1)}a_{i(2)}\ldots a_{i(r-1)} X a_{i(r+s+1)}\ldots a_{i(n)}\]

\subsection{Operations on partitions}

There are several operations on the set $P$ of partitions:
\begin{itemize}
\item  The \emph{tensor product} of two partitions $p\in P(k, l)$ and $q\in P(k', l')$ is the partition $p\otimes q\in P(k+k', l+l')$ obtained by \emph{horizontal concatenation} (putting $p$ and $q$ side by side), i.e. the first $k$ of the $k+k'$ upper points are connected by $p$ to the first $l$ of the $l+l'$ lower points, whereas $q$ connects the remaining $k'$ upper points with the remaining $l'$ lower points.
\item The \emph{composition} of two partitions $p\in P(k, l)$ and $q\in P(l, m)$ is the partition  $qp\in P(k, m)$ obtained by \emph{vertical concatenation} (putting $p$ above $q$). Connect $k$ upper points by $p$ to $l$ middle points and then continue the lines by $q$ to $m$ lower points. This yields a partition, connecting $k$ upper points with $m$ lower points. The $l$ middle points and all lines neither connected to the $k$ upper nor to the $m$ lower points are removed.
\item The \emph{involution} of a partition $p\in P(k, l)$ is the partition $p^{*}\in P(l, k)$ obtained by turning $p$ upside down.
\item We also have a \emph{rotation} on partitions. Let $p\in P(k, l)$ be a partition connecting $k$ upper points with $l$ lower points. Shifting the very left upper point to the left of the lower points -- hereafter it still belongs to the same block as before -- gives rise to a partition in $P(k-1, l+1)$, called a \emph{rotated version} of $p$. We may also rotate the left lower point to the upper line, and we may rotate on the right-hand side of the two lines of points as well. In particular, for a partition $p\in P(0, l)$, we may rotate the very left point to the very right and vice-versa.
\end{itemize}

These operations (tensor product, composition, involution and rotation) are called the \emph{category operations}. 
A collection $\CC$ of subsets $\CC(k, l)\subseteq P(k, l)$ (for every $k, l\in\N_{0}$) is a \emph{category of partitions} if it is invariant under the category operations and if the identity partition $\idpart\in P(1, 1)$ is in $\CC$ (by rotation, we then also have $\paarpart\in\CC$). Rotation may be deduced from the other category operations if $\paarpart\in\CC$ (see \cite{BanicaSpeicher09}).
Examples of categories of partitions include the set of all partitions $P$, the set of all \emph{pair partitions} $P_{2}$ (all blocks have size two), the set of all \emph{noncrossing partitions} $NC$ (the lines may be drawn such that they do not cross) and $NC_{2}$ (\cite{BanicaSpeicher09}). We write $\CC=\langle p_1,\ldots, p_n\rangle$, if $\CC$ is the smallest category of partitions containing the partitions $p_1,\ldots, p_n$. We say that $\CC$ is \emph{generated} by $p_1,\ldots,p_n$.

We will often speak of ``\emph{applying the pair partition}'' to a partition $p\in\CC$ or simply ``\emph{using the pair partition}''. By this, we mean that we compose $p$ with the partition $\idpart^{\otimes\alpha}\otimes\baarpart\otimes\idpart^{\otimes\beta}$ for suitable $\alpha$ and $\beta$ (maybe iteratively) -- an operation which can be done inside the category, i.e. the partition resulting from this procedure is in $\CC$ again. This ``erases'' or ``couples'' two points of $p$. As an example consider the partitions $\pi_2\in P(0,8)$ and $\pi_3\in P(0,12)$, written as words:
\[\pi_2=abbaabba,\qquad \pi_3=abccbaabccba\]
Composing $\pi_3$ with $\idpart^{\otimes 2}\otimes\baarpart\otimes \idpart^{\otimes 4}\otimes\baarpart\otimes \idpart^{\otimes 2}$ yields that $\pi_2\in\langle\pi_3\rangle$.
Similarly we see that $\langle\vierpart\rangle\subset\langle\fatcrosspart\rangle$.

\subsection{Linear maps associated to partitions}

Given a partition $p \in P(k,l)$ and two multi-indices $(i(1), \dotsc, i(k))$, $(j(1), \dotsc, j(l))$, we can label the diagram of $p$ with these numbers (the upper and the lower row both are labelled from left to right, respectively) and we put
\[\delta_p(i,j)
  =
  \begin{cases}
    1 & \text{if } p \text{ connects only equal indices,} \\
    0 & \text{otherwise}
  \end{cases}\]
For every $n \in \NN$, we fix a basis $e_1,\ldots, e_n$ of $\C^n$ and define a map $T_p: (\C^n)^{\otimes k} \to (\C^n)^{\otimes l}$  associated with $p$ by:
\[T_p(e_{i(1)} \otimes \dotsm \otimes e_{i(k)}) = \sum_{1 \leq j(1), \dotsc, j(l) \leq n} \delta_p(i, j) \cdot e_{j(1)} \otimes \dotsm \otimes e_{j(l)} \]

By \cite{BanicaSpeicher09}, we know that the category operations on partitions may be translated to the maps $T_p$ in the following way:
\begin{itemize}
 \item $T_{p\otimes q}=T_p\otimes T_q$
 \item $T_{qp}=n^{-\rl(p,q)}T_qT_p$, where $\rl(p,q)$ denotes the number of removed loops when composing $p$ and $q$.
 \item $T_{p^*}=(T_p)^*$
\end{itemize}

The maps $T_p$ can be normalized in such a way that they become partial isometries, see \cite{FreslonWeber13}.

\subsection{Definition of easy quantum groups}\label{SubsectDefEasy}

In the 1990's, S. Wang (\cite{Wang95}, \cite{Wang98}) introduced two important examples of compact matrix quantum groups: the \emph{free orthogonal quantum group} $O_n^+$ and the \emph{free symmetric quantum group} $S_n^+$. For $n\in\N$, they are given by the following universal C$^*$-algebras (endowed with a comultiplication as in Definition \ref{DefCMQG}):
\begin{align*}
A_{o}(n) &= C^*\big(u_{ij}, 1\leqslant i, j\leqslant n\;\big|\; u_{ij}^{*} = u_{ij}, 
\sum_k u_{ik}u_{jk} = \sum_k u_{k i}u_{k j} = \delta_{ij}\big) \\
A_{s}(n) & = C^*\big(u_{ij}, 1\leqslant i, j\leqslant n \;\big|\; u_{i j}^{*} = u_{i j} = u_{i j}^{2},  \sum_k u_{i k} = \sum_k u_{k j} = 1\big)
\end{align*}
Note that if we add the relations $u_{ij}u_{kl} = u_{kl}u_{ij}$ to $A_{o}(n)$ (resp. $A_{s}(n)$), the resulting quotient C$^*$-algebra is isomorphic to the algebra of continuous functions $C(O_{N})$ (resp. $C(S_{N})$). Hence, $O_n^+$ is a ``liberated version'' of $O_n$, likewise for $S_n^+$ and $S_n$.

Using the linear maps $T_p$ indexed by partitions $p\in P$, a basis of the intertwiner spaces of $O_n^+$ and $S_n^+$ can be given explicitly.
\begin{align*}
\Hom_{O_n^+}(k, l) & = \Span\{T_{p} \;|\; p\in NC_{2}(k, l)\} \\
\Hom_{S_n^+}(k, l) & = \Span\{T_{p} \;|\; p\in NC(k, l)\}
\end{align*}
Viewing the groups $O_n$ and $S_n$ as quantum groups (see Section \ref{SectCMQG}), their intertwiner spaces are given by:
\begin{align*}
\Hom_{O_n}(k, l) & = \Span\{T_{p} \;|\; p\in P_{2}(k, l)\} \\
\Hom_{S_n}(k, l) & = \Span\{T_{p} \;|\; p\in P(k, l)\}
\end{align*}
Hence the liberation procedure is given by passing from all partitions to the noncrossing ones.

By functoriality of the intertwiner spaces, we know that for any compact matrix quantum group $S_n\subset G \subset O_n^+$, its intertwiner spaces fulfill:
\[\Span\{T_{p} \;|\; p\in P(k, l)\}\supset \Hom_G(k, l)\supset  \Span\{T_{p} \;|\; p\in NC_{2}(k, l)\}\]

In 2009, Banica and Speicher came up with the following natural definition \cite[Def 6.1]{BanicaSpeicher09} refining the liberation of compact Lie groups but also going far beyond it.

\begin{definition}
A compact matrix quantum group $S_n\subset G\subset O_n^+$ is called \emph{easy} (or: \emph{partition quantum group}) if
there is a category of partitions $\CC$ such that:
\[\Hom_G(k, l) = \Span\{T_{p} \;|\; p\in \CC(k, l)\},\qquad \textnormal{for all } k, l\in\N_0\]
\end{definition}

In principle, we do not need to require that $\CC$ is a category of partitions. If the spaces $\Hom_G(k,l)$ are spanned by maps $T_p$ indexed by some arbitrary subsets of $P(k,l)$, then we can find a unique category of partitions such that the spaces $\Hom_G(k,l)$ are given by this category in the above sense. This follows automatically from the fact that the intertwiner spaces form a tensor category and that the relations $T_{p\otimes q}=T_p\otimes T_q$, $T_{pq}=n^{-b(p,q)}T_pT_q$ and $T_{p^*}=T_p^*$ hold (see \cite{BanicaSpeicher09}).
Examples of easy quantum groups include $S_n, O_n, S_n^+$ and $O_n^+$. By Woronowicz's Tannaka-Krein duality, categories of partitions and intermediate easy quantum groups in between $S_n$ and $O_n^+$ are in one-to-one correspondence; to be more precise, to each category of partitions we assign a unique family $(G_n)_{n\in\N}$ of easy quantum groups, namely one quantum group $G_n$ for every dimension $n\in\N$. 
Note that small dimensions might not distinguish between different categories. For instance, for $n=3$, the quantum groups $S_3$ and $S_3^+$ coincide.
By definition, easy quantum groups carry a lot of combinatorial data. We refer to \cite{BanicaSpeicher09}, \cite{BanicaCurranSpeicher10} and \cite{Weber13} for further details concerning easy quantum groups. 

The above definition only covers \emph{orthogonal} easy quantum groups. Wang also defined a liberation $U_n^+$ of the unitary group $U_n$. A notion of \emph{unitary} easy quantum groups $S_n\subset G\subset U_n^+$ is defined in \cite{TarragoWeber13}. In the present article we only consider orthogonal easy quantum groups.

\section{The history of the classification of easy quantum groups}\label{SectHistory}

In this section we give a brief overview on the classification of orthogonal easy quantum groups following the chronological order. It began in \cite{BanicaSpeicher09}, and was continued in \cite{BanicaCurranSpeicher10}, \cite{Weber13},\cite{RaumWeberSimpl} and \cite{RaumWeberSemiDirect}. In Section \ref{SectWeakSimpl} we complete the classification.

\subsection{The free case}

Since the liberations of $O_n$ to $O_n^+$ and of $S_n$ to $S_n^+$ are the first examples of easy quantum groups, it is natural to consider categories of noncrossing partitions first.
By \cite[Theorem 3.16]{BanicaSpeicher09} and \cite[Corollary 2.10]{Weber13}, there are exactly seven \emph{free easy quantum groups} (also called \emph{free orthogonal quantum groups}), namely:
\begin{align*}
 B_n^+ &\qquad\subset &{B_n'}^{\!+} &\qquad\subset &B_n^{\#+} &\qquad\subset &O_n^+\\
\upsubset\; & &\upsubset\; & & & &\upsubset\;\\
 S_n^+ &\qquad\subset &{S_n'}^{\!+} & &\subset &  &H_n^+ &.
\end{align*}

The corresponding seven categories of partitions are described as follows. They are subclasses of $NC$.
\begin{align*}
 \langle\singleton\rangle &\qquad\supset &\langle\legpart\rangle &\qquad\supset &\langle\singleton\otimes\singleton\rangle &\qquad\supset &\langle\emptyset\rangle=NC_2\\
\downsubset\; & &\downsubset\; & & & &\downsubset\;\\
 \langle\singleton, \vierpart\rangle=NC &\qquad\supset &\langle\singleton\otimes\singleton, \vierpart\rangle & &\supset &  &\langle\vierpart\rangle & .
\end{align*}

\subsection{The group case}

Besides the noncrossing categories, there are many categories which contain partitions that have some crossing lines. The most prominent partition which involves a crossing is the crossing partition $\crosspart\in P(2,2)$. Every category containing the crossing partition corresponds to a group. By \cite[Theorem~2.8]{BanicaSpeicher09} we know that there are exactly six easy groups.
\begin{align*}
 B_n &\qquad\subset &B_n'  &\qquad\subset &O_n\\
\upsubset\; & &\upsubset\; &  &\upsubset\;\\
 S_n &\qquad\subset &S_n' &\qquad\subset  &H_n&.
\end{align*}

Accordingly, there are exactly six categories of partitions containing the crossing partition $\crosspart$.

\begin{align*}
 \langle\crosspart,\singleton\rangle  &\qquad\supset &\langle\crosspart,\singleton\otimes\singleton\rangle &\qquad\supset &\langle\crosspart\rangle=P_2\\
\downsubset\; & &\downsubset\; &  &\downsubset\;\\
 \langle\crosspart,\singleton, \vierpart\rangle=P &\qquad\supset &\langle\crosspart,\singleton\otimes\singleton, \vierpart\rangle &\qquad\supset  &\langle\crosspart,\vierpart\rangle &.
\end{align*}

Note that  the two categories $\langle\crosspart,\legpart\rangle$ and $\langle\crosspart,\singleton\otimes\singleton\rangle$ coincide. 

\subsection{The halfliberated case}

\emph{Half-liberated} easy quantum groups were introduced in \cite{BanicaSpeicher09} and \cite{BanicaCurranSpeicher10}. They arise as a kind of a partial liberation -- the commutativity relations $u_{ij}u_{kl}=u_{kl}u_{ij}$ are replaced by the weaker relation $u_{ij}u_{kl}u_{st}=u_{st}u_{kl}u_{ij}$ (cf. Subsection \ref{SubsectDefEasy}).
The corresponding categories contain the half-liberating partition $\halflibpart$ but not the crossing partition $\crosspart$.
By \cite[Theorem 4.13]{Weber13}, there are exactly the following half-liberated easy quantum groups, containing the \emph{hyperoctahedral series} $H_n^{(s)}$, $s\geq 3$ of \cite[Definition 3.1]{BanicaCurranSpeicher10}.

\begin{align*}
 B_n^{\#*} &&\qquad\subset &&O_n^*\\
 &&\quad &&\upsubset\;\\
 &&  &&H_n^*\\
 && &&\upsubset\;\\
 &&  &&H_n^{(s)}, s\geq 3&.
\end{align*}

The corresponding  categories of partitions are described as follows.

\begin{align*}
 \langle\halflibpart,\singleton\otimes\singleton\rangle &&\qquad\supset &&\langle\halflibpart\rangle\\
 && &&\downsubset\;\\
 && &&\langle\halflibpart, \vierpart\rangle\\
 && &&\downsubset\;\\
 && &&\langle\halflibpart, \vierpart, h_s\rangle&.
\end{align*}

\subsection{The non-hyperoctahedral case}

To begin with the full classification of all orthogonal easy quantum groups, we recall some facts from \cite[Corollary 3.7, Lemma 3.9, Lemma 3.10]{Weber13}. Let $\CC$ be a category of partitions.
\begin{itemize}
 \item If $\vierpart\notin\CC$, then all blocks of partitions $p\in\CC$ have length at most two.
 \item If $\vierpart\notin\CC$ and $\CC\not\subset NC$, then $\crosspart\in\CC$ or $\halflibpart\in\CC$.
 \item If $\vierpart\in\CC$, $\singleton\otimes\singleton\in\CC$ and $\CC\not\subset NC$, then $\crosspart\in\CC$.
\end{itemize}
Using the results from the previous subsections, we infer that there are exactly nine categories $\CC$ such that $\vierpart\notin\CC$ (four noncrossing ones, three containing $\crosspart$, two containing $\halflibpart$ but not $\crosspart$). If $\vierpart\in\CC$ and $\singleton\otimes\singleton\in\CC$, then there are four cases (two noncrossing, two containing $\crosspart$).

\begin{definition}
 A category of partitions $\mathcal C\subset P$ is called \emph{hyperoctahedral}, if $\vierpart\in\mathcal C$ but $\singleton\otimes\singleton\notin\mathcal C$. Otherwise, it is called \emph{non-hyperoctahedral}.
\end{definition}

By the above considerations (see also \cite{BanicaCurranSpeicher10} and \cite[Theorem 3.12]{Weber13}), we know that there are exactly 13 non-hyperoctahedral categories of partitions.
As for the case of hyperoctahedral categories the situation is more complicated.

\subsection{The group-theoretical hyperoctahedral case}

An easy quantum group is called hyperoctahedral, if the corresponding category of partitions is hyperoctahedral.
An alternative definition is the following. A quantum group $S_n\subset G\subset O_n^+$ is hyperoctahedral, if $G$ is a quantum subgroup of $H_n^+$ (this is equivalent to $\vierpart\in\CC$ if $G$ is easy) and the fundamental representation $u$ of $G$ is irreducible (equivalent to $\singleton \otimes\singleton\notin \mathcal C$ for easy quantum groups). This definition has the advantage that we do not need any easiness assumption.

In \cite{RaumWeberSimpl} and \cite{RaumWeberSemiDirect}, we investigated hyperoctahedral categories of a special form, which we review here.

\begin{definition}
 A category $\mathcal C\subset P$ of partitions is called \emph{group-theoretical}, if $\primarypart\in\mathcal C$.
\end{definition}

Note that most of the group-theoretical categories are hyperoctahedral (applying the pair partition to $\primarypart$, we immediately see that group-theoretical categories contain the four block  partition $\vierpart$). The only two exceptions (i.e. group-theoretical categories containing $\singleton\otimes\singleton$) are $\langle\crosspart,\vierpart,\singleton\otimes\singleton\rangle$
and $\langle\crosspart,\vierpart,\singleton\rangle$.

The structure of group-theoretical quantum groups is quite algebraic: To a group-theoretical category $\CC$, we associate a certain subgroup $F(\CC)$ of the infinite free product of the cyclic group $\Z_2$. This subgroup contains the full information about $\CC$, since the category is completely determined by its subset of those partitions which look like elements in $\Z_2^{*\infty}$. Recall from Subsection \ref{SectPart} that we may view partitions as words.

\begin{definition}[{\cite{RaumWeberSimpl}}]
 A partition $p\in P$ is in \emph{single leg form}, if $p$ is -- as a word -- of the form $p=a_{i(1)}a_{i(2)}\ldots a_{i(n)}$, 
where $a_{i(j)}\neq a_ {i(j+1)}$ for $j=1,\ldots,n-1$.
The letters $a_1,\ldots,a_k$ correspond to the points connected by the partition $p$.
In other words, in a partition in single leg form no two consecutive points belong to the same block.
\end{definition}

A group-theoretical category $\CC$ is generated by its partitions in single leg form and the partition $\primarypart$.
The property $\primarypart\in\mathcal C$ translates to the fact that the generators $u_{ij}$ of a group-theoretical easy quantum group $G\subset O_n^+$ are partial isometries fulfilling  $u_{ij}u_{kl}^2=u_{kl}^2u_{ij}$, i.e. the squares of the generators are central projections. 

In \cite{RaumWeberSemiDirect} and \cite{RaumWeberSimpl} we prove that the map $\CC\mapsto F(\CC)$ is a lattice isomorphism between group-theoretical categories and a large class of subgroups of $\Z_2^{*\infty}$ and we deduce that there are uncountably many pairwise non-isomorphic easy quantum groups. This proves that the class of easy quantum groups is quite rich. Moreover, the map $F$ translates the problem of classifying group-theoretical categories to a problem in group theory, a problem in fact which is far from being solvable. In \cite{RaumWeberSemiDirect} we show that it contains the problem of understanding all varieties of groups.
Nevertheless, the map $F$ helps to explain the structure of group-theoretical easy quantum groups in a satisfying way. Denote by $F_n(\CC)$ the restriction of $F(\CC)$ to $\Z_2^{*n}$.

\begin{theorem}[{\cite{RaumWeberSemiDirect}}]
 Let $S_n\subset G\subset O_n^+$ be an easy quantum group with associated category $\CC$ of partitions. If $\CC$ is group-theoretical, then $G$ may be written as a semi-direct product:
 \[C(G)\cong C^*(\Z_2^{*n}/F_n(\mathcal C)) \Join C(S_n)\]
\end{theorem}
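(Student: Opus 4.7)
The plan is to exploit the single leg decomposition provided by $\primarypart \in \CC$ in order to split the generators $u_{ij}$ of $C(G)$ into a ``commutative'' part, producing $C(S_n)$, and a ``free'' part, producing $C^*(\Z_2^{*n}/F_n(\CC))$. First I would translate the partition relation $\primarypart \in \CC$ into algebraic relations on $C(G)$: using that $T_{\primarypart}$ intertwines $u^{\otimes 3}$ with itself, one obtains $u_{ij}u_{kl}^{2} = u_{kl}^{2}u_{ij}$ for all $i,j,k,l$, so that $p_{ij} := u_{ij}^{2}$ is a central projection (after checking that $u_{ij}^{3} = u_{ij}$, which follows from $\vierpart \in \CC$ together with $\primarypart$ via the pair partition). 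Orthogonality of $u$ then gives $\sum_k p_{ik} = \sum_k p_{kj} = 1$, so that $(p_{ij})_{i,j}$ is a magic unitary with commutative image, providing a surjection $\pi_S \colon C(S_n) \twoheadrightarrow C^{*}(p_{ij})\subseteq C(G)$.

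Next I would produce the group algebra factor. Because the $p_{ij}$ are central, one may write $u_{ij} = s_{ij} p_{ij}$ where $s_{ij} = u_{ij} + (1-p_{ij})$ is a self-adjoint unitary in $C(G)$, i.e. a copy of $\Z_{2}$. The orthogonality relations reduce to $s_{ik}^{2} = 1$, while additional commutation constraints between the $s_{ij}$'s are controlled only by partitions of $\CC$ in single leg form, because the relations deduced from any partition $p\in\CC$ involving a repeated letter at neighbouring points are automatically absorbed by the centrality of the $p_{ij}$. Hence the $C^{*}$-algebra generated by the $s_{ij}$ is a quotient of $C^{*}(\Z_{2}^{*n})$; by definition of $F_n(\CC)$ as the subgroup encoding precisely the relations coming from the single leg form partitions of $\CC$, this quotient is $C^{*}(\Z_{2}^{*n}/F_n(\CC))$, yielding a second surjection $\pi_{\Gamma}$ into $C(G)$.

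The third step would be to assemble the semi-direct (or Bichon) product. The symmetric group $S_n$ acts on $\Z_{2}^{*n}$ by permuting the canonical generators, and this action preserves $F_n(\CC)$ because categories of partitions are invariant under relabelling; hence one obtains a natural action of $S_n$ on $C^{*}(\Z_{2}^{*n}/F_n(\CC))$, which is exactly the data needed to form the Bichon-style product $C^{*}(\Z_{2}^{*n}/F_n(\CC)) \Join C(S_n)$. The maps $\pi_S$ and $\pi_{\Gamma}$ combine into a surjective $*$-homomorphism from this product onto $C(G)$ by sending $[x_i] \otimes \chi_{\sigma} \mapsto s_{\sigma(i),i}\, p_{\sigma(i),i}$ and one checks on generators that the defining relations of the Bichon product are satisfied.

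The main obstacle is the converse inclusion: showing that this surjection is an isomorphism, i.e.\ that no extra relation hides in $C(G)$. This requires, via Woronowicz's Tannaka--Krein duality, to match the intertwiner spaces on both sides. Concretely, I expect to argue that any $T_p$ for $p\in\CC$ already lives in the intertwiner spaces of the semi-direct product: decomposing $p$ into its single leg form and using $\primarypart$ to recover repeated neighbouring letters, one sees that the corresponding intertwiner relation factors through the known intertwiners of $C^{*}(\Z_{2}^{*n}/F_n(\CC))$ and of $C(S_n)$. A parallel dimension count of fixed vectors of $u^{\otimes k}$ on both sides, using the bijection between partitions in single leg form in $\CC$ and elements of $\Z_{2}^{*n}/F_n(\CC)$, then forces equality and completes the identification.
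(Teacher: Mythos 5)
This theorem is not proved in the present paper; it is quoted from \cite{RaumWeberSemiDirect}, so your proposal has to be measured against the argument given there. Your overall strategy is the correct one: use $\primarypart\in\CC$ (together with $\vierpart\in\CC$, which gives $u_{ij}^3=u_{ij}$ and $u_{ij}u_{ik}=u_{ji}u_{ki}=0$ for $j\neq k$) to see that the $p_{ij}:=u_{ij}^2$ form a magic unitary of \emph{central} projections generating a quotient of $C(S_n)$, split off self-adjoint unitaries carrying the $\Z_2^{*n}$-part, and then prove injectivity of the resulting surjection from the Bichon product via Tannaka--Krein. But your choice of sign unitaries is flawed. Setting $s_{ij}:=u_{ij}+(1-p_{ij})$ produces $n^2$ self-adjoint unitaries, and nothing identifies $s_{ij}$ with $s_{ij'}$ for $j\neq j'$; the algebra they generate is a priori a quotient of $C^*(\Z_2^{*n^2})$, not of $C^*(\Z_2^{*n})$ as you assert. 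Worse, this ``group part'' is not independent of the permutation part: already for $G=H_n=\Z_2\wr S_n$ one computes that $s_{ij}(\epsilon,\sigma)$ equals $\epsilon_i$ when $\sigma(j)=i$ and equals $1$ otherwise, so $C^*(s_{ij}\,;\,i,j)$ genuinely involves the $S_n$-coordinates and cannot serve as the tensor factor $C^*(\Z_2^n)$. The correct choice is the row sums $s_i:=\sum_j u_{ij}$, which are self-adjoint unitaries by the orthogonality relations and satisfy $u_{ij}=s_ip_{ij}$; these $n$ elements are what represent the generators of $\Z_2^{*n}/F_n(\CC)$.

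The second genuine gap is the injectivity step, which is the actual content of the theorem and which you only outline. That the $s_i$ satisfy the relations of $F_n(\CC)$ follows as you say from the single-leg-form partitions of $\CC$; but to conclude one must also show that \emph{no further} relations hold, i.e.\ that a word $g_{i_1}\cdots g_{i_k}$ is trivial in $\Z_2^{*n}/F_n(\CC)$ exactly when the associated partition lies in $\CC$, and that the intertwiner spaces of $C^*(\Z_2^{*n}/F_n(\CC))\Join C(S_n)$ are spanned by precisely the maps $T_p$ with $p\in\CC$. This rests on the fact (recorded in Section 2 of the paper) that a group-theoretical category is generated by $\primarypart$ together with its partitions in single leg form, and it requires an explicit computation of the fixed-point spaces of $u^{\otimes k}$ for the semi-direct product; a ``parallel dimension count'' must moreover contend with the maps $T_p$ being linearly dependent for small $n$. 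None of this is fatal to the approach, but as written the proposal asserts, rather than proves, the two identifications that carry the theorem.
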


This decomposition contains a lot of information. See \cite{RaumWeberSemiDirect} for consequences of this picture.


\section{The fat crossing partition in hyperoctahedral categories}\label{SectDefWeak}

Let us now consider the remaining cases in the classification of orthogonal easy quantum groups. 
In this section, the fat crossing partition $\fatcrosspart$ will play a special role. 

\begin{lemma}\label{LemBezBasePart}
 The following partitions may be generated inside the following categories using the category operations.
\begin{itemize}
 \item[(i)] $\vierpart\in\langle\fatcrosspart\rangle$
 \item[(ii)] $\fatcrosspart\in\langle\primarypart\rangle$
 \item[(iii)] $\primarypart\in\langle h_s\rangle$ for all $s\geq 3$
\end{itemize}
\end{lemma}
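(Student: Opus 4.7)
For (i), I would rotate $\fatcrosspart\in P(4,4)$ to its form in $P(0,8)$, where it appears cyclically as the word $aabbaabb$. Since each of the two $bb$-subwords consists of cyclically adjacent points in the same block, successively composing with $\idpart^{\otimes 2}\otimes\baarpart\otimes\idpart^{\otimes 4}\in P(8,6)$ and then with $\idpart^{\otimes 4}\otimes\baarpart\in P(6,4)$ caps off both $bb$-pairs; the second cap turns the remaining two $b$-points into an internal loop which is removed. What is left is a single four-block on four points, namely $\vierpart$. This is essentially the remark $\langle\vierpart\rangle\subset\langle\fatcrosspart\rangle$ already made just before the lemma.

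For (iii), I would rotate $h_s$ to its form in $P(s,s)$, whose two blocks are determined by the parity of the cyclic index: one block consists of $\{u_i : i \text{ odd}\}\cup\{l_j : j \text{ even}\}$ and the other is its complement. A direct block-merge calculation for the composition $h_s\circ h_s$ (in $P(s,s)$) shows that it has exactly two blocks, $\{u_i,l_k: i,k \text{ odd}\}$ and $\{u_i,l_k: i,k \text{ even}\}$. For $s=3$ these have sizes $4$ and $2$, respectively, and reading them on the cyclic boundary produces the word $abaaba$, which is $\primarypart$ after a cyclic shift of two. For $s\geq 4$ the blocks are larger, but inspection of the cyclic word of $h_s\circ h_s$ reveals adjacent same-block pairs at the upper--lower junction (for every $s$) and at the wrap-around (additionally for $s$ odd); iteratively capping such pairs shrinks the two blocks by two points at a time, each cap creating a further same-block adjacency. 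A short parity-based accounting shows that one can always reach the sizes $4$ and $2$ in precisely the pattern of $\primarypart$.

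For (ii), the conceptual content is that $\primarypart$ algebraically encodes the centrality of $u_{ij}^2$; iterating this relation with $v=u_{kl}^2$ yields $u_{ab}^2u_{cd}^2=u_{cd}^2u_{ab}^2$, which is precisely the relation encoded by $\fatcrosspart$. To realize this combinatorially I set
\[
  \tilde r := (\primarypart\otimes\idpart)\circ(\idpart\otimes\primarypart)\in P(4,4).
\]
A straightforward block-merge calculation shows that $\tilde r$ has the two four-blocks $\{u_1,u_4,l_2,l_3\}$ and $\{u_2,u_3,l_1,l_4\}$, and a second identical calculation for $\tilde r\circ\tilde r$ yields the two four-blocks $\{u_1,u_4,l_1,l_4\}$ and $\{u_2,u_3,l_2,l_3\}$. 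Reading the latter on the cyclic boundary of $P(4,4)$ gives the word $abbaabba$, which is $aabbaabb=\fatcrosspart$ after a cyclic shift of three, so $\fatcrosspart\in\langle\primarypart\rangle$. The main obstacle is finding the right composition: $\tilde r$ by itself already gives a crossing four-block partition, but one cyclically inequivalent to $\fatcrosspart$; iterating the construction exactly one more time is what rotates the block pattern into the correct cyclic class.
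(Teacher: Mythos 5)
Your proposal is correct. Part (i) is exactly the paper's argument: two applications of the pair partition to the rotated word $aabbaabb$. For (ii) and (iii) you take a genuinely different, more symbolic route. In (ii) the paper starts from $\vierpart\otimes\vierpart=aaaabbbb$ and uses $\primarypart$ as a ``pair positioner'' to shift the middle $aa$ past a $b$ twice, arriving at $aabbaabb$; you instead self-compose $\primarypart$. I checked your block computations: with the convention that in $q\circ p$ the partition $p$ sits on top (consistent with $T_{qp}=T_qT_p$), $(\primarypart\otimes\idpart)\circ(\idpart\otimes\primarypart)$ indeed has the two four-blocks $\{1,4,2',3'\}$ and $\{2,3,1',4'\}$ -- note the opposite stacking order would give a three-block partition, so the order matters, but your stated blocks are the correct ones -- and its square has blocks $\{1,4,1',4'\}$ and $\{2,3,2',3'\}$, whose cyclic word $abbaabba$ is a rotation of $\fatcrosspart$. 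In (iii) the paper composes $h_s\otimes\idpart^{\otimes 3}$ with a rotated copy of itself so that only three strands survive, whereas you square $h_s$ inside $P(s,s)$ and cap down; your description of the blocks of $h_s$ and of $h_s\circ h_s$ is correct. The only place you wave your hands, ``a short parity-based accounting,'' can be made crisp in one line: the cyclic word of $h_s\circ h_s$ is the mountain $c_1\cdots c_sc_s\cdots c_1$ with $c_i=a$ for $i$ odd and $c_i=b$ for $i$ even, so capping the innermost doubled letter repeatedly passes through $c_1\cdots c_jc_j\cdots c_1$ for $j=s,s-1,\dots$ without ever emptying a block and stops at $c_1c_2c_3c_3c_2c_1=abaaba$, a rotation of $\primarypart$. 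Your constructions for (ii) and (iii) are easier to verify by pure block-merge bookkeeping; the paper's are more pictorial and, in the case of (ii), foreshadow the later use of $\primarypart$ as a device for shifting consecutive pairs past single legs.
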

\begin{proof}
(i) Simply apply the pair partition twice to $\fatcrosspart$.

(ii) Compose the tensor product $\vierpart\otimes\vierpart$ with $\primarypart$ in the following way:
\newsavebox{\primary}
\savebox{\primary}
{ \begin{picture}(5,6)
 \thicklines
 \put(0,6){\partii{1}{1}{2}}
 \put(1.2,1){\line(1,2){2}}
 \put(1.8,4){\line(1,-2){1}}
 \put(0,1){\uppartii{1}{2}{3}}
 \end{picture}}

\setlength{\unitlength}{0.5cm}
\begin{center}
\begin{picture}(19,12)
\put(0,10){\uppartiv{1}{1}{2}{3}{4}}
\put(0,10){\uppartiv{1}{5}{6}{7}{8}}
\put(0,5.5){\upparti{4}{1}}
\put(0,5.5){\upparti{4}{2}}
\put(1.8,4.4){\usebox{\primary}}
\put(0,5.5){\upparti{4}{6}}
\put(0,5.5){\upparti{4}{7}}
\put(0,5.5){\upparti{4}{8}}
\put(0,1){\upparti{4}{1}}
\put(0,1){\upparti{4}{2}}
\put(0,1){\upparti{4}{3}}
\put(2.8,0){\usebox{\primary}}
\put(0,1){\upparti{4}{7}}
\put(0,1){\upparti{4}{8}}
\put(10,6){$=$}
\put(10.5,5.5){\uppartiv{1}{1}{2}{5}{6}}
\put(10.5,5.5){\uppartiv{2}{3}{4}{7}{8}}
\end{picture}
\end{center}
Using rotation, we infer $\fatcrosspart\in\langle\primarypart\rangle$.

(iii) We construct a rotated version  of $\primarypart$ using $h_s\otimes \idpart^{\otimes 3}$ and its rotated version:
\setlength{\unitlength}{0.5cm}
\begin{center}
\begin{picture}(21,9)
\put(0,5){\uppartiii{2}{1}{3}{5}}
\put(5,7){\line(1,0){1}}
\put(8.25,6.95){$\ldots$}
\put(0,5){\uppartiii{1}{2}{4}{6}}
\put(6,6){\line(1,0){1}}
\put(8.25,5.95){$\ldots$}
\put(12,5){\upparti{2}{1}}
\put(12,5){\upparti{2}{2}}
\put(12,5){\upparti{2}{3}}
\put(10.8,7){\line(1,0){.5}}
\put(10.8,6){\line(1,0){1.5}}
\put(10,5){\upparti{2}{1}}
\put(11,5){\upparti{1}{1}}

\put(3,5){\partiii{1}{1}{3}{5}}
\put(8,3){\line(1,0){1}}
\put(10.4,2.95){$\ldots$}
\put(3,5){\partiii{2}{2}{4}{6}}
\put(9,2){\line(1,0){1}}
\put(10.4,1.95){$\ldots$}
\put(0,2){\upparti{2}{1}}
\put(0,2){\upparti{2}{2}}
\put(0,2){\upparti{2}{3}}
\put(11.8,3){\line(1,0){.5}}
\put(11.8,2){\line(1,0){1.5}}
\put(11,5){\partii{2}{2}{4}}
\put(11,5){\partii{1}{1}{3}}

\put(16.5,4.5){$=$}
\put(16.8,3.7){\upparti{2}{1}}
\put(16.8,3.7){\upparti{2}{2}}
\put(16.8,3.7){\upparti{2}{3}}
\put(19.1,3.7){\oval(2,2)[tl]}
\put(19.1,5.7){\oval(2,2)[br]}

\put(1.1,1){a}
\put(2.1,1){b}
\put(3.1,1){a}
\put(13.1,7.5){a}
\put(14.1,7.5){b}
\put(15.1,7.5){a}

\put(17.9,2.8){a}
\put(18.9,2.8){b}
\put(19.9,2.8){a}
\put(17.9,6){a}
\put(18.9,6){b}
\put(19.9,6){a}
\end{picture}
\end{center}
\end{proof}

Analoguous to our article on group-theoretical categories \cite{RaumWeberSimpl} we can restrict to partitions of a special form. This is developed in the sequel.

\begin{definition}\label{DefSingleDouble}
Let $p\in P(0,n)$ be a partition with $k$ blocks and let the letters $a_1,\ldots,a_k$ correspond to these blocks.
Then $p$ is -- as a word -- of the form
\[p=a_{i(1)}^{k(1)}a_{i(2)}^{k(2)}\ldots a_{i(m)}^{k(m)}\]
for some $k(1),\ldots,k(m)\in\NN$.
We say that $p$ is in \emph{single-double leg form}, if  $a_{i(j)}\neq a_ {i(j+1)}$ for $j=1,\ldots,m-1$ and $k(j)\in\{1,2\}$ for all $j$.
We call $a_{i(j)}^{k(j)}$ a \emph{single leg} if $k(j)=1$, and  a \emph{double leg} if $k(j)=2$.
\end{definition}

In hyperoctahedral categories, the partitions in single-double leg form determine the category completely, as can be seen in the following.

\begin{lemma}\label{LemBlockVerbinden}
 Let $\cC$ be a category of partitions containing the four block partition $\vierpart$, and let $p\in\cC$.
In $\cC$ we can \emph{connect neighboring blocks} of $p$, i.e. the partition $p'$ obtained from $p$ by combining two blocks of $p$ which have at least two neighbouring points is again in $\cC$.
\end{lemma}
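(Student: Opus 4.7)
The strategy is to use the four block partition $\vierpart \in \CC$ as a local ``connector'' that merges two blocks of $p$ along a neighbouring pair of points.

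The plan runs as follows. First, I would use rotation (a category operation) to transport the two neighbouring points to the top row: if they lie in blocks $A$ and $B$ of $p$, a suitable sequence of rotations yields a partition $\tilde p \in \CC$ having exactly two upper points, one in $A$ and the other in $B$, while all other blocks of $\tilde p$ coincide with those of $p$. Second, I would rotate $\vierpart \in P(0,4)$ into a partition $\vierpartrot \in P(2,2) \cap \CC$ whose four points still form a single block. Third, I would compose $\vierpartrot$ on top of $\tilde p$: the two lower points of $\vierpartrot$ are identified with the two upper points of $\tilde p$, and since all four points of $\vierpartrot$ lie in a single block, this identification forces the blocks $A$ and $B$ of $\tilde p$ to merge with one another and with the two surviving upper points of the composite.

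Finally, rotating back to the original type $P(k,l)$ of $p$ yields exactly the partition $p'$ obtained from $p$ by merging $A$ and $B$ while leaving every other block unchanged. Since every step of the construction uses only category operations (rotation and composition), the whole construction remains inside $\CC$, so $p' \in \CC$.

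The only mildly delicate point is the first step, namely that any two neighbouring points can be jointly rotated into positions $1$ and $2$ of the upper row. This is essentially bookkeeping: iterating rotation cyclically shifts the boundary of $p$ one point at a time without altering block memberships, so an adjacent pair can always be moved together to the upper-left corner.
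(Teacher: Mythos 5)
Your proposal is correct and is essentially the paper's own argument: both merge the two blocks by composing with a rotated four block $\vierpartrot\in P(2,2)\cap\CC$. The only cosmetic difference is that the paper first rotates $p$ so that it has no upper points and then pads the connector as $\idpart^{\otimes\alpha}\otimes\vierpartrot\otimes\idpart^{\otimes\beta}$, whereas you rotate the two neighbouring points to the upper row and compose with $\vierpartrot$ directly before rotating back.
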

\begin{proof}
By rotation, we may assume that $p$ has no upper points. Composing $p$ with $\idpart^{\otimes \alpha}\otimes \vierpartrot\otimes\idpart^{\otimes\beta}$ for suitable $\alpha$ and $\beta$ yields the result.
\end{proof}

\begin{lemma}\label{LemReduzieren1}
 Let $\cC$ be a category of partitions. Let $p\in P(0,n)$ be a partition, seen as the word $p=a_{i(1)}^{k(1)}a_{i(2)}^{k(2)}\ldots a_{i(m)}^{k(m)}$.
 We put $k(j)':= \begin{cases} 1 &\textnormal{if $k(j)$ is odd}\\ 2 &\textnormal{if $k(j)$ is even}\end{cases}$, and  $p':=a_{i(1)}^{k(1)'}a_{i(2)}^{k(2)'}\ldots a_{i(m)}^{k(m)'}$.
 
 If $\cC$ contains the four block partition $\vierpart$, then $p\in\mathcal C$ if and only if $p'\in\mathcal C$. 
\end{lemma}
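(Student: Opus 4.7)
\emph{Plan.} I will prove the two implications separately; the forward direction uses only the pair partition (which lies in every category), while the converse uses the four-block partition $\vierpart$ to \emph{insert} points into existing blocks. The key arithmetic observation is that $k(j) - k(j)'$ is always even, so we only ever need to add or erase pairs of points in a single block. By rotation, I may assume throughout that $p$ and $p'$ lie in $P(0,n)$.

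For $p \in \cC \Rightarrow p' \in \cC$: whenever a run $a_{i(j)}^{k(j)}$ has length $k(j) \geq 3$, two of its consecutive points belong to the same block of $p$. Composing $p$ with $\idpart^{\otimes\alpha} \otimes \baarpart \otimes \idpart^{\otimes\beta}$ at the appropriate position caps off those two points and thereby erases them from the partition, while leaving the remaining points of that block connected to one another. The result is a partition of the same word-shape as $p$ with $k(j)$ decreased by $2$. Iterating over $j$ until $k(j) = k(j)'$ in each run produces $p' \in \cC$. Note that this direction does not require $\vierpart \in \cC$.

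For $p' \in \cC \Rightarrow p \in \cC$: the converse requires inserting points rather than erasing them, which is where $\vierpart$ enters. Let $v \in P(1,3) \cap \cC$ be the rotation of $\vierpart$ whose unique upper point and three lower points all lie in one block. Given $q \in \cC \cap P(0,n)$ with a distinguished lower point at position $j$ belonging to some block $B$, form
\[ r \;=\; \idpart^{\otimes(j-1)} \otimes v \otimes \idpart^{\otimes(n-j)} \;\in\; \cC \cap P(n,n+2). \]
Then $rq \in \cC \cap P(0,n+2)$ is the partition obtained from $q$ by replacing the point at position $j$ by three consecutive points all lying in $B$, i.e.\ by inserting two new points of $B$ immediately after position $j$; all other block relations of $q$ are preserved, since outside position $j$ the composition runs through identity strands. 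Iterating this operation inserts any even number of additional points into any chosen block at any chosen position. Growing the $j$-th run of $p'$ from length $k(j)'$ to length $k(j)$ in this way, for each $j$, produces $p \in \cC$.

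The main obstacle is the careful bookkeeping in the reverse direction, namely verifying that the composition $rq$ behaves exactly as claimed: all block relations of $q$ outside position $j$ must be preserved, and the three new points must end up in precisely the block $B$. This follows once one unpacks how composition interacts with tensor products of identity strands, but it is the crux of the argument. With this in hand, both implications follow by straightforward induction on $\sum_{j}(k(j) - k(j)')$.
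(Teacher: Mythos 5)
Your proof is correct and follows essentially the same route as the paper: erase pairs of consecutive points via the pair partition in one direction, and grow a block by two points using a rotated version of $\vierpart$ in the other. The only cosmetic difference is that you perform each insertion in a single composition with the $P(1,3)$ rotation of $\vierpart$, whereas the paper first inserts a disconnected pair via $\paarpart$ and then fuses it to the neighbouring block using Lemma~\ref{LemBlockVerbinden}.
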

\begin{proof}
Use the pair partition to infer that $p'\in\cC$ whenever $p\in\cC$. Conversely, compose $p'$ iteratively with partitions $\idpart^{\otimes \alpha}\otimes \paarpart\otimes\idpart^{\otimes\beta}$  and use Lemma \ref{LemBlockVerbinden} to increase $k(j)'$ by two, eventually obtaining $p$. 
\end{proof}

If $p\in P(0,n)$ is a partition, we can associate a partition $p'\in P$ in single-double leg form to it, obtained by applying the above procedure iteratively. Note that $p'$ might be the empty partition $\emptyset\in P(0,0)$.

\begin{proposition}\label{LemReduzieren2}
 Let $\cC$ be a category of partitions containing the four block partition $\vierpart$ and let $p\in P$. 
Then $p\in\mathcal C$ if and only if its associated partition in single-double leg form is in $\cC$. 
\end{proposition}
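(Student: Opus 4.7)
The plan is to derive this proposition essentially for free from Lemma~\ref{LemReduzieren1} together with the rotation operation. Since rotation lies among the category operations and does not alter the underlying block structure, I first reduce to the case $p \in P(0,n)$ by rotating all upper points of $p$ down onto the lower row. This reduction is harmless for deciding membership in $\CC$, and the definition of single-double leg form depends only on the word structure of the resulting partition.

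Next, I would write $p \in P(0,n)$ canonically as a word $p = a_{i(1)}^{k(1)} a_{i(2)}^{k(2)} \cdots a_{i(m)}^{k(m)}$ in letters corresponding to its blocks, where consecutive letters are distinct, that is $a_{i(j)} \neq a_{i(j+1)}$ for $1 \leq j \leq m-1$. This canonical presentation is obtained uniquely by grouping maximal runs of equal consecutive letters. A single application of Lemma~\ref{LemReduzieren1} to this decomposition then produces a partition $p'$ with exponents $k(j)'$ equal to $1$ or $2$ according to the parity of $k(j)$, and the lemma itself furnishes the equivalence $p \in \CC \iff p' \in \CC$, using only the hypothesis $\vierpart \in \CC$.

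The last point is to verify that $p'$ is exactly the partition in single-double leg form associated to $p$. The letters are unchanged under the reduction, so $p'$ still satisfies $a_{i(j)} \neq a_{i(j+1)}$, and by construction each exponent $k(j)'$ lies in $\{1,2\}$; both conditions of Definition~\ref{DefSingleDouble} are met. Thus $p'$ is in single-double leg form, and it coincides with the partition obtained by the iterative procedure described immediately after Lemma~\ref{LemReduzieren1}. Combining this with the equivalence of the previous step yields the proposition. There is no genuine obstacle: all the substance already resides in Lemma~\ref{LemReduzieren1}, and the proposition is its clean reformulation as a normal form statement, modulo rotation and the observation that the output of the reduction already sits in single-double leg form.
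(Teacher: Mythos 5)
Your proof is correct and follows essentially the same route as the paper, whose entire argument is ``apply Lemma~\ref{LemReduzieren1} iteratively''; you merely add the accurate observation that a single application to the maximal-run decomposition already lands in single-double leg form, since the exponent reduction never deletes a run and hence never creates a new adjacency of equal letters, so the iteration stabilizes immediately. The initial rotation to reduce to $P(0,n)$ is exactly the step the paper leaves implicit, and your justification of it (rotation is an invertible category operation preserving the block structure) is the intended one.
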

\begin{proof}
Apply Lemma \ref{LemReduzieren1} iteratively. 
\end{proof}

The preceding lemma allows us to restrict our study to partitions in single-double leg form. 
We now show that the category $\langle\fatcrosspart\rangle$ is a base case for all hyperoctahedral categories that contain at least one crossing partition.

\begin{lemma}\label{RemSizeTwo}
Let $\CC$ be a category of partitions such that $\singleton\otimes\singleton\notin\CC$ and let $p\in\CC$. Then $p$ is of even length and every block has size at least two.
\end{lemma}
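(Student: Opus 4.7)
The strategy is to prove the contrapositive: if $p \in \CC$ has odd length or contains a singleton block, then $\singleton \otimes \singleton \in \CC$. The main tool is a \emph{tail erasure} operation. For any $p \in \CC$ viewed as an element of $P(0,n)$ with $n \geq 2$, the partition $\idpart^{\otimes(n-2)} \otimes \baarpart$ lies in $\CC$ (since $\idpart, \baarpart \in \CC$ and categories are closed under tensor products), so its composition with $p$ yields a partition $p' \in \CC$ of length $n-2$ in which the last two points of $p$ are removed. Depending on whether those two points belonged to the same block of $p$ or not, the affected block either shrinks by two or two blocks merge; crucially, any block of $p$ disjoint from positions $n-1, n$ is carried unchanged to $p'$.

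For the even-length claim, suppose $p \in \CC$ has odd length $n$. If $n = 1$ then $p = \singleton$ and tensoring gives $\singleton \otimes \singleton \in \CC$ directly. If $n \geq 3$, iteratively apply tail erasure $(n-1)/2$ times. Each step is a legitimate category operation and reduces the length by two, so the final partition lies in $P(0,1)$ and must equal $\singleton$. Hence $\singleton \in \CC$ and $\singleton \otimes \singleton \in \CC$ follows.

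For the block-size claim, suppose $p \in \CC$ has a singleton block. Using the cyclic rotation of the leftmost lower point to the rightmost position (a category operation explicitly available since $p$ has only lower points), I place this singleton block at position $1$. By the first part we may assume $n = |p|$ is even; the case $n = 2$ forces $p = \singleton \otimes \singleton$ immediately. For $n \geq 4$ apply tail erasure $(n-2)/2$ times. Since each erasure touches only the current rightmost two positions and position $1$ is always disjoint from them, the singleton block $\{1\}$ is never affected. After the last step the partition has length $2$ with $\{1\}$ still a block, which forces the remaining point at position $2$ to form its own singleton block. This produces $\singleton \otimes \singleton \in \CC$, contradicting the hypothesis.

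The only care required is in the bookkeeping: one has to verify that the singleton block at position $1$ genuinely survives every tail erasure and that the terminal length-$2$ partition is forced to be $\singleton \otimes \singleton$ rather than $\paarpart$. Both observations are immediate from the local effect of the tail erasure on positions $n-1, n$, so I do not anticipate any substantial technical obstacle beyond this position-tracking.
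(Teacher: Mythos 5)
Your proposal is correct and follows essentially the same route as the paper: the paper also erases points in pairs via composition with $\idpart^{\otimes\alpha}\otimes\baarpart^{\otimes\beta}$ to reduce an odd-length partition to $\singleton$, and rotates a singleton block to the form $\singleton\otimes p'$ before erasing the rest to land on $\singleton\otimes\singleton$. Your iterated ``tail erasure'' is just a step-by-step version of the paper's one-shot composition, with the same position-tracking observation.
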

\begin{proof}
If $p\in P(0,2k+1)$ is of odd length, we compose it with $\idpart\otimes\baarpart^{\otimes k}$ in order to obtain $\singleton\in\CC$, in contradiction to $\singleton\otimes\singleton\notin\CC$. If $p$ contains a block of length one, it is of the form $p=\singleton\otimes p'$ after rotation. We use the pair partition to infer $\singleton\otimes\singleton\in\CC$.
\end{proof}

\begin{proposition}\label{PropFat}
 Let $\mathcal C$ be a hyperoctahedral category of partitions with $\mathcal C\neq\langle\vierpart\rangle$. Then the fat crossing partition $\fatcrosspart$ is in $\mathcal C$.
\end{proposition}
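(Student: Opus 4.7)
The plan is to combine the existing classification of noncrossing categories with the reduction tools just established. Concretely, I would first locate a crossing partition inside $\cC$, then reduce it via pair partitions and the four-block to a small canonical crossing, and finally show that this canonical form generates $\fatcrosspart$ using $\vierpart \in \cC$.

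By the list of the seven free easy quantum groups in Section \ref{SectHistory}, the only hyperoctahedral category contained in $NC$ is $\langle \vierpart \rangle$ itself. Since $\cC$ is hyperoctahedral with $\cC \neq \langle \vierpart \rangle$, we conclude $\cC \not\subset NC$, so there exists $p \in \cC$ with at least one crossing. Using Proposition \ref{LemReduzieren2} I replace $p$ by its associated partition in single-double leg form (still in $\cC$), and by Lemma \ref{RemSizeTwo} every block has size at least two. A crossing in this single-double leg form is witnessed by two blocks $X, Y$ whose positions interleave as $i_1 < i_2 < i_3 < i_4$ with $i_1, i_3 \in X$ and $i_2, i_4 \in Y$.

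I would then iteratively compose $p$ with pair partitions $\idpart^{\otimes \alpha} \otimes \paarpart \otimes \idpart^{\otimes \beta}$ (using rotation as needed) in order to erase all blocks other than $X, Y$ and to trim $X, Y$ down to the minimal legs needed to witness the crossing. Whenever an erasure would threaten to leave a singleton behind, which is forbidden by $\singleton \otimes \singleton \notin \cC$, I first use Lemma \ref{LemBlockVerbinden} to absorb the offending leg into a neighboring block through a $\vierpart$-merge. The resulting small partition $q \in \cC$ has only two blocks, still in single-double leg form, arranged in a crossing pattern. A short case analysis shows that $q$ is, up to rotation and removal of disjoint pairs, one of $\fatcrosspart$, the partition $\pi_2 = abba\,abba$ (which is a cyclic rotation of $\fatcrosspart$, hence equivalent under the rotation operation), or $\crosspart$. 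In the first two cases we obtain $\fatcrosspart \in \cC$ immediately. In the last case, I construct $\fatcrosspart$ from $\crosspart$ and $\vierpart$: taking the rotated four-block $\vierpart^{*} \in P(2,2)$, forming $\vierpart^{*} \otimes \vierpart^{*} \in P(4,4)$, and composing with the permutation of the lower row that exchanges $(1',2')$ with $(3',4')$, which is built from copies of $\crosspart$ acting as elementary transpositions, yields exactly $\fatcrosspart$.

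The main obstacle is controlling the reduction in the middle step without producing a forbidden singleton at any intermediate stage; the hyperoctahedral condition is rigid and naive applications of $\paarpart$ can isolate a single leg. The interplay of the single-double leg form, which ensures that adjacent legs lie in distinct blocks, with Lemma \ref{LemBlockVerbinden}, which allows absorbing stray legs into $\vierpart$-blocks before erasure, is what keeps every intermediate block of size at least two. Once a short two-block crossing is reached, the identification with one of the canonical cases and the explicit extraction of $\fatcrosspart$ are direct.
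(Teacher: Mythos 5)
Your overall strategy (find a crossing, reduce it with pair partitions and block merges, then identify a small canonical crossing) is the same as the paper's, but your terminal case analysis is incomplete, and this is a genuine gap. The reduction by capping adjacent same-block points cannot ``trim $X,Y$ down to the minimal legs witnessing the crossing'': a cap only removes two \emph{adjacent} points of the same block, and applying it to two adjacent points of \emph{different} blocks merges those blocks and destroys the crossing. Consequently there are two-block crossing partitions in single-double leg form that are irreducible under your moves but are not on your list of $\fatcrosspart$, $\pi_2$, $\crosspart$. Concretely, $aabaab$ (a rotated version of $\primarypart$) has its only adjacent same-block pairs at positions $(1,2)$ and $(4,5)$, and capping either one yields the noncrossing word $baab$; likewise $h_s=abab\cdots ab$ for $s\geq 3$ has no adjacent same-block pair at all. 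Both arise as possible outputs of your reduction (e.g.\ when the crossing partition you start from is already one of these), so the claim that ``a short case analysis shows that $q$ is one of $\fatcrosspart$, $\pi_2$, or $\crosspart$'' fails.

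This is exactly why the paper's proof does not conclude $\fatcrosspart\in\cC$ directly from the reduction: it shows that at least one of $\fatcrosspart$, $\primarypart$, or $h_s$ ($s\geq 3$) lies in $\cC$ (its four-case analysis of the length-six and length-eight words $aababb$, $aabaab$, $aabbab$, $aabbaabb$ rules out the first and third via $\singleton\otimes\singleton$, and identifies the second as $\primarypart$), and then invokes Lemma \ref{LemBezBasePart}, whose parts (ii) and (iii) give the nontrivial diagrammatic constructions $\fatcrosspart\in\langle\primarypart\rangle$ and $\primarypart\in\langle h_s\rangle$. To repair your argument you must add these two cases and these two constructions; they are not ``direct'' consequences of having $\vierpart$ and a crossing. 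Your handling of the $\crosspart$ case (building $\fatcrosspart$ from $\vierpart^*\otimes\vierpart^*$ composed with the permutation exchanging the two pairs of lower legs) is correct and matches what the paper implicitly uses when it says $\crosspart\in\cC$ implies $\fatcrosspart\in\cC$.
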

\begin{proof}
We show that one of the following cases hold for $\cC$:
\begin{itemize}
 \item $\fatcrosspart\in\mathcal C$.
 \item $\primarypart\in\mathcal C$.
 \item $h_s\in\mathcal C$ for some $s\geq 3$.
\end{itemize}
By Lemma \ref{LemBezBasePart} this will complete the proof.

The only hyperoctahedral category of \emph{noncrossing} partitions is $\langle\vierpart\rangle$. Thus, $\mathcal C$ contains a partition $p\in P\backslash NC$ with a crossing. We may assume that $p$ consists only of two blocks (connect all other blocks with one of the crossing blocks, using Lemma~\ref{LemBlockVerbinden}). Furthermore, we may assume that no three points in a row are connected by one of the blocks, by using the pair partition. Hence, we may write $p$ as
\[p=a^{k_1}b^{k_2}a^{k_3}b^{k_4}\ldots a^{k_n} \qquad \textnormal{or} \qquad p=a^{k_1}b^{k_2}a^{k_3}b^{k_4}\ldots b^{k_n}\]
where $k_i\in\{1,2\}$ and $n\geq 4$. Note that the length of $p$ is even by Lemma \ref{RemSizeTwo}.

If all $k_i=1$, then $p=h_s$ for some $s\geq 2$. If $s=2$, we have $\crosspart\in\CC$ and hence $\fatcrosspart\in\CC$. Otherwise, we may assume $k_1=2$ by rotation. If $n\geq 5$, we may erase the two points $a^{k_1}$ using the pair partition and we obtain a partition $p'\in\mathcal C$ which still has a crossing. Iterating this procedure, we either end up with a partition $h_s$ for some $s\geq 2$ or with a partition $p\in\mathcal C$ such that $k_1=2$ and $n=4$. In the latter case, $p$ is of length six or eight. There are exactly four cases of such a partition:
\begin{itemize}
 \item $p=aababb$ -- An application of the pair partition would yield $\singleton\otimes\singleton\in\mathcal C$ which is a contradiction.
 \item $p=aabaab$ -- This is a rotated version of $\primarypart$.
 \item $p=aabbab$ -- Again this would yield $\singleton\otimes\singleton\in\mathcal C$.
 \item $p=aabbaabb$ -- This is $\fatcrosspart$ in a rotated version.
\end{itemize}
\end{proof}

\begin{corollary}
If $\CC$ is a hyperoctahedral category of partitions, then we are in  exactly one of the following cases.
\begin{itemize}
 \item[(i)] $\CC=\langle\vierpart\rangle$
 \item[(ii)] $\CC$ contains $\fatcrosspart$ but not $\primarypart$.
 \item[(iii)] $\CC$ is group-theoretical.
\end{itemize}
\end{corollary}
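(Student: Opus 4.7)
The plan is to assemble Proposition~\ref{PropFat} with the definition of group-theoreticity and the observation that $\langle\vierpart\rangle$ is a subcategory of the noncrossing partitions $NC$. The argument splits into an exhaustiveness part and a mutual-exclusivity part, each of which is short.

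For exhaustiveness, assume $\CC$ is hyperoctahedral. If $\CC = \langle\vierpart\rangle$, we are in case (i). Otherwise Proposition~\ref{PropFat} yields $\fatcrosspart \in \CC$. From here there is a clean dichotomy: either $\primarypart \notin \CC$, placing us in case (ii), or $\primarypart \in \CC$, which is by definition what it means for $\CC$ to be group-theoretical, placing us in case (iii). Note that in case (iii) the partition $\fatcrosspart$ is automatically present as well, by Lemma~\ref{LemBezBasePart}(ii), but this is not needed here since cases (ii) and (iii) are distinguished only by the presence or absence of $\primarypart$.

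For mutual exclusivity between case (i) and the other two cases, the key observation is that $\langle\vierpart\rangle$ is contained in $NC$: the generator $\vierpart$ is noncrossing, and the category operations (tensor product, composition, involution and rotation) preserve noncrossingness. On the other hand, both $\fatcrosspart$ and $\primarypart$ are crossing: in the cyclic ordering $1, 2, 3, 3', 2', 1'$ of the points of $\primarypart$, the pair block $\{3, 1'\}$ interleaves with the four block $\{1, 2, 2', 3'\}$; for $\fatcrosspart$ the crossing of the two four-blocks is built into the definition. Hence $\fatcrosspart, \primarypart \notin \langle\vierpart\rangle$, so case (i) is disjoint from cases (ii) and (iii). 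Mutual exclusivity of (ii) and (iii) is tautological from the defining conditions $\primarypart \notin \CC$ versus $\primarypart \in \CC$.

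I do not anticipate any real obstacle: the corollary is essentially a compact repackaging of Proposition~\ref{PropFat}, combined with the definition of group-theoretical categories and the elementary observation that $\langle\vierpart\rangle$ is noncrossing. The only small verification is that $\primarypart$ is not noncrossing, which is visible from the block structure in the cyclic ordering described above.
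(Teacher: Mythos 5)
Your proposal is correct and follows the same route the paper intends: the corollary is stated as an immediate consequence of Proposition~\ref{PropFat} (exhaustiveness) together with the observations that $\langle\vierpart\rangle\subset NC$ while $\fatcrosspart$ and $\primarypart$ are crossing (mutual exclusivity with case (i)), and that (ii) and (iii) are disjoint by definition. The paper gives no separate proof, and your write-up supplies exactly the routine verifications that were left implicit.
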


Hence, the only open case in the classification of easy quantum groups is the one of hyperoctahedral categories containing $\fatcrosspart$ but not $\primarypart$.

\section{Allowed and forbidden words and the numbers $\wdepth$}\label{SectStructure}

Recall that when studying categories of partitions, we can always restrict to partitions $p\in P(0,n)$, i.e. to partitions which have no upper points, using rotation. These partitions can be written as words in letters $a_1,\ldots a_k$ corresponding to the $k$ blocks of $p$ (see Section 1.2). In the sequel, we will often identify partitions and words. 
The following partitions play a crucial role in the classification of non group-theoretical categories of partitions.

\begin{definition}
For $k\in\NN$ we define the partition $\pi_k\in P(0,4k)$ by:
\[\pi_k:=a_1a_2\ldots a_ka_k\ldots a_2a_1a_1a_2\ldots a_ka_k\ldots a_2a_1\]
Here, the letters $a_i$ are mutually different, hence $\pi_k$ consists of $k$ different blocks, each of size four.
Note, that $\pi_1$ coincides with $\vierpart$ whereas $\pi_2=a_1a_2a_2a_1a_1a_2a_2a_1$ is a rotated version of $\fatcrosspart$.
\end{definition}

Using the pair partition, we infer that $\pi_l\in\langle \pi_k\rangle$ for all $l\leq k$. Hence, we have $\langle\pi_k\rangle=\langle\pi_l,l\leq k\rangle$. Furthermore, $\pi_k\in\langle \primarypart \rangle$ for all $k\in\N$. Indeed, assuming $\pi_k\in\langle \primarypart \rangle$, we infer that $\vierpart\otimes\pi_k$ is in $\langle \primarypart \rangle$, where we label $\vierpart=a_{k+1}a_{k+1}a_{k+1}a_{k+1}$. Composing $\vierpart\otimes\pi_k$ iteratively with partitions $\idpart^{\otimes \alpha}\otimes\primarypart\otimes\idpart^{\otimes\beta}$, we can shift pairs $a_{k+1}a_{k+1}$ in between copies of $a_k$ in order to obtain $\pi_{k+1}\in\langle\primarypart\rangle$. As an example, consider $\vierpart\otimes\pi_2$ seen as the word:
\[a_3a_3a_3a_3a_1a_2a_2a_1a_1a_2a_2a_1\]
Composing this partition with $\idpart^{\otimes 2}\otimes\primarypart\otimes\idpart^{\otimes 7}$ yields:
\[a_3a_3a_1a_3a_3a_2a_2a_1a_1a_2a_2a_1\]
Composition with $\idpart^{\otimes 3}\otimes\primarypart\otimes\idpart^{\otimes 6}$ yields:
\[a_3a_3a_1a_2a_3a_3a_2a_1a_1a_2a_2a_1\]
Now, shifting the first pair $a_3a_3$ in the same way, we obtain $\pi_3$.

We infer that we have the following series of categories:
\[\langle\vierpart\rangle \; \subset \;
\langle \fatcrosspart\rangle\ \; \subset \;
\langle \pi_l, l\leq 3 \rangle \; \subset \;
\langle \pi_l, l\leq 4 \rangle \; \subset \;
\ldots \; \subset \;
\langle \pi_l, l\in \NN \rangle \; \subset \;
\langle \primarypart \rangle\] 
In Section \ref{SectCStar} we will show that all these inclusions are strict. In particular, this proves that the categories $\langle\pi_l, l\leq k\rangle$ and $\langle \pi_l,l\in\N\rangle$ are non group-theoretical and hyperoctahedral. (Note that Section \ref{SectCStar} is independent from Section \ref{SectStructure} and \ref{SectWeakSimpl}.)
We will denote $\langle \pi_l, l\in \NN \rangle$ also by $\langle \pi_l, l\leq k \rangle$ with $k=\infty$ (formally, we might put $\pi_\infty:=\emptyset$).
We will now show that \emph{any} non group-theoretical hyperoctahedral category $\mathcal C$ is of the form $\langle\pi_l, l\leq k\rangle$ for some $k\in \{1,2,3,\ldots,\infty\}$, i.e.  the non group-theoretical hyperoctahedral categories of partitions are exactly given by the one-parameter series $\langle\pi_l, l\leq k\rangle$, $k\in\{1,2,\ldots,\infty\}$. 
Our strategy for the proof is the following. Suppose that $\mathcal C$ is a non group-theoretical hyperoctahedral category containing a partition $p$ which ``looks like a rough version of $\pi_k$''. We show that then $\pi_k\in\mathcal C$. Conversely, the category $\langle\pi_l, l\leq k\rangle$ generates all these distorted versions of $\pi_k$, and \emph{every} partition in $\mathcal C$ is such a distorted version. This proves that $\mathcal C=\langle\pi_l, l\leq k\rangle$ for some $k$.

The essence of the proof boils down to a detailed study of the structure of partitions in non group-theoretical hyperoctahedral categories. 
We begin with a simple observation about vertically reflected partitions.

\begin{definition}
 Let $p\in P(k,l)$ be a partition. Its \emph{vertically reflected version} $\bar p\in P(k,l)$ is given by vertical reflection, i.e. if the upper line of $p$ is of the form $a_{i(1)} a_{i(2)} \ldots a_{i(k)}$ and the lower line $a_{j(1)} a_{j(2)} \ldots a_{j(l)}$, then the upper line of $\bar p$ is $a_{i(k)} \ldots a_{i(2)} a_{i(1)}$ whereas the lower line is $a_{j(l)}\ldots a_{j(2)} a_{j(1)}$.
\end{definition}

\begin{lemma}\label{LemVertRefl}
 Let $\cC$ be a category of partitions and let $p\in P$ be a partition and $\bar p$ its vertical reflected version. Then $p\in \cC$ if and only if $\bar p\in \cC$.
\end{lemma}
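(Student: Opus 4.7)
The plan is to construct $\bar p$ from $p$ using only the four category operations, from which both implications of the equivalence follow (the converse direction uses $\bar{\bar p}=p$).

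Given $p\in P(k,l)$ with upper line $a_{i(1)}\ldots a_{i(k)}$ and lower line $a_{j(1)}\ldots a_{j(l)}$, I would first apply the involution to obtain $p^{*}\in P(l,k)$, whose upper line reads $a_{j(1)}\ldots a_{j(l)}$ and lower line reads $a_{i(1)}\ldots a_{i(k)}$, since involution flips $p$ vertically without reversing either line horizontally.

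Next, I would perform $l$ successive rotations around the left edge, each time pulling the current leftmost upper point down around that edge. A single such rotation inserts the moved point at the very left of the lower line, so after $l$ iterations the upper points $a_{j(1)},\ldots,a_{j(l)}$ of $p^{*}$ have moved to the first $l$ positions of the lower line in reversed order $a_{j(l)},\ldots,a_{j(1)}$. The upper line is empty at this stage, and the partition lies in $P(0,l+k)$ with lower line $a_{j(l)}\ldots a_{j(1)}\, a_{i(1)}\ldots a_{i(k)}$. I would then perform $k$ successive rotations around the right edge, each lifting the current rightmost lower point up. By the symmetric argument, the last $k$ lower points rise in reversed order, producing a partition in $P(k,l)$ whose upper line reads $a_{i(k)}\ldots a_{i(1)}$ and whose lower line reads $a_{j(l)}\ldots a_{j(1)}$; this is exactly $\bar p$. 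Block membership is preserved throughout, since rotation only relocates individual points without altering which block they lie in.

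The only delicate point is bookkeeping about the rotation convention: one has to check that an edge rotation places the moved point at the outer end of the destination line, so that repeatedly rotating on the same side produces a reversal rather than a cyclic shift. Once this is pinned down, the entire argument is a straightforward chain of category operations, and the reverse implication is obtained by applying the same construction to $\bar p$ and invoking $\bar{\bar p}=p$.
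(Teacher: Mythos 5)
Your proposal is correct and is essentially the paper's own argument: the paper also passes to $p^*$ (which lies in $\cC$ since $\cC$ is closed under involution) and then rotates to obtain $\bar p$. You merely spell out the rotation bookkeeping that the paper leaves implicit.
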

\begin{proof}
If $p\in \cC$, then $p^*\in\cC$ (recall that $p^*$ is obtained by turning $p$ upside down) since $\cC$ is a category. Rotating $p^*$  yields $\bar p\in\cC$.
\end{proof}

The next two lemmata look rather technical, but in fact they encode the key observations for the classification of non group-theoretical hyperoctahedral categories: They are the tools to describe ``forbidden'' and ``allowed'' words of these categories.

\begin{lemma}\label{LemABA}
Let $\cC$ be a hyperoctahedral category of partitions and let $p\in P(0,n)$ be a partition of the form $p=abaX$, where $X$ is a subword and $a\neq b$. If $p\in\cC$, then $\primarypart\in\cC$.
More general, $\primarypart\in\cC$ whenever there is a partition $ab^kaX\in\cC$ for $k\in\NN$ odd.
\end{lemma}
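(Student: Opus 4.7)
My plan is to combine $p$ with its vertical reflection $\bar p$ and peel away the symmetric middle to isolate a rotation of $\primarypart$. First I reduce the general statement to the base case $k=1$: in $ab^{k}aX$ with odd $k\geq 3$, positions $2$ and $3$ are two neighbouring points of the single $b$-block, so composing with $\idpart\otimes\baarpart\otimes\idpart^{\otimes(n-3)}$ yields $ab^{k-2}aX\in\cC$, and iterating produces $abaX\in\cC$.

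Now assume $p=abaX\in\cC$ with $X=x_{1}\dotsm x_{n-3}$. By Lemma \ref{LemVertRefl} we have $\bar p=x_{n-3}\dotsm x_{1}\,aba\in\cC$, and therefore the tensor product
\[
  q \;:=\; p\otimes\bar p \;=\; aba\,x_{1}\dotsm x_{n-3}\,x_{n-3}\dotsm x_{1}\,aba
\]
also lies in $\cC$. Because the tensor product makes the blocks of $p$ and the blocks of $\bar p$ disjoint, the two central points of $q$ are labelled by the same letter $x_{n-3}$ but sit in different blocks; composing with $\idpart^{\otimes\alpha}\otimes\baarpart\otimes\idpart^{\otimes\alpha}$ for the appropriate $\alpha$ merges these two blocks and deletes the pair. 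Iterating this peeling step $n-3$ times erases the whole palindromic middle $x_{1}\dotsm x_{n-3}x_{n-3}\dotsm x_{1}$ from the inside out and leaves a partition $q''\in\cC$ on the six remaining points, namely the two outer copies of $aba$. The cumulative effect of the peeling is that, for every letter $c$ occurring in $X$, the $c$-block of $p$ has been merged with the $c$-block of $\bar p$.

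It remains to analyse $q''$. Lemma \ref{RemSizeTwo} forces $|b|\geq 2$ in $p$, so $b$ occurs at least once in $X$; hence one of the peeling steps merged the two $b$-blocks, placing positions $2$ and $5$ of $q''$ into a common block. If $X$ also contains an $a$, then the two $a$-blocks are merged as well, and $q''$ has blocks $\{1,3,4,6\}$ and $\{2,5\}$, which is a rotation of $\primarypart$. Otherwise the two $a$-blocks $\{1,3\}$ and $\{4,6\}$ are still separate but neighbouring at positions $3$ and $4$; since $\vierpart\in\cC$ by hyperoctahedrality, Lemma \ref{LemBlockVerbinden} lets me merge them, once again producing the partition with blocks $\{1,3,4,6\}$ and $\{2,5\}$. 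Thus $\primarypart\in\cC$ in either case. The only delicate point throughout the peeling is that one must never create a singleton; this is precisely what the lower bound $|b|\geq 2$ from Lemma \ref{RemSizeTwo} guarantees, by supplying the $b$-letter inside $X$ which merges the two $b$-blocks before they would otherwise be reduced to singletons.
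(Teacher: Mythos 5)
Your argument is correct and follows essentially the same route as the paper's proof: reduce to $k=1$ by capping adjacent $b$'s, tensor $p$ with its vertical reflection $\bar p$ (Lemma \ref{LemVertRefl}), peel the palindromic middle with the pair partition so that each letter's block merges with its primed copy, use Lemma \ref{RemSizeTwo} to ensure $b$ occurs in $X$ (hence positions $2$ and $5$ of the residual six-point word are joined), and finish by connecting the neighbouring $a$-blocks via Lemma \ref{LemBlockVerbinden} to obtain $abaaba$, a rotation of $\primarypart$. No gaps; this matches the paper's proof in all essential steps.
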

\begin{proof}
By Lemma \ref{RemSizeTwo}, the block consisting of the points $b$ is not a singleton, hence $p$ is of the form $p=abaY_1bY_2$, where $Y_1$ and $Y_2$ are some subwords (which might contain $a$ or $b$). 
By Lemma \ref{LemVertRefl}, the vertical reflected version $\bar p$ of $p$ is in $\cC$, thus $p\otimes \bar p$ is in $\cC$ and it is of the form:
 \[p\otimes\bar p=abaY_1bY_2 \bar Y'_2 b' \bar Y'_1 a' b' a'\]
In $\bar p$, all letters $x$ of $p$ are replaced by letters $x'$ (since the blocks of $p$ and those of $\bar p$ are not connected to each other in $p\otimes\bar p$) and the order of the letters is reversed.

Using the pair partition, we may couple the last point of $Y_2$ with the first of $\bar Y'_2$. Iteratively, we obtain $abaa'ba'\in\cC$ (note that the blocks of $b$ and $b'$ are connecting by the coupling procedure). By Lemma \ref{LemBlockVerbinden}, we may connect the blocks of $a$ and $a'$ and we obtain $abaaba\in\cC$ which is a rotated version of $\primarypart$.

Using the pair partition, we deduce that $abaX\in\cC$ whenever $ab^kaX\in\cC$ for some odd number $k\in\NN$.
\end{proof}

\begin{lemma}[The Doubling Lemma]\label{LemABA2}
 Let $\cC$ be a non group-theoretical hyperoctahedral category of partitions. Let $p\in\cC$ be a partition of the form $p=X_1aX_2aX_3$ with some subwords $X_1, X_2$ and $X_3$. Then, the following holds true.
 \begin{itemize}
 \item[(a)] Every letter $b\neq a$ which appears in the subword $X_2$, appears an even number of times.
 \item[(b)] If $X_2$ is of odd length, it contains the letter $a$.
 \item[(c)] If $X_2$ is of even length (greater or equal two), it contains a consecutive pair of letters, i.e. $X_2$ is of the form $X_2=Y_1bbY_2$ (with possibly $b=a$).
 \end{itemize}
In particular, if a subword of a partition contains a letter twice, it contains a pair of consecutive letters.
\end{lemma}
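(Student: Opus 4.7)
The three statements are proved together, with (a) doing the heavy lifting and (b), (c) reduced to (a) by combinatorial manipulations. The goal throughout is, whenever the stated condition is violated, to derive $\primarypart\in\mathcal{C}$, contradicting the assumption that $\mathcal{C}$ is not group-theoretical. The main tool for producing $\primarypart$ is Lemma~\ref{LemABA}: it suffices to exhibit a partition in $\mathcal{C}$ of the form $ab^{k}aX$ with $k$ odd. Throughout, I freely use rotation, Proposition~\ref{LemReduzieren2} to reduce to single-double leg form (which preserves each letter's parity in each rotated subword), Lemma~\ref{LemBlockVerbinden} to merge adjacent blocks (since $\vierpart\in\mathcal{C}$), and Lemma~\ref{LemVertRefl} to tensor $p$ with its vertically reflected copy $\bar p$.

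For (a), assume some $b\neq a$ appears an odd number of times in $X_2$, and argue by induction on $|X_2|$ with a secondary induction on the number of blocks of $p$ other than $a$ and $b$. Rotating, write $p=aX_2 aX$ for $X=X_3X_1$. If the only letters occurring in $X_2$ are $a$ and $b$, then splitting $X_2$ at its $a$'s produces maximal pure runs of $b$'s between consecutive occurrences of $a$, and the odd total count forces at least one such run to have odd length; Lemma~\ref{LemABA} then gives $\primarypart$. Otherwise, pick a letter $c\neq a,b$ occurring in $X_2$ and look at the parity of the count of $c$ in $X_2$: if it is even, merge the block $c$ into the block $b$, strictly decreasing the number of blocks while keeping the odd parity of the count of $b$ in $X_2$; if it is odd, merge $c$ instead into the block $a$, so that the former $c$-positions in $X_2$ become new $a$-positions that split $X_2$ into strictly shorter subwords, at least one of which retains an odd count of $b$. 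Both merges are carried out via Lemma~\ref{LemBlockVerbinden} along a path of adjacent occurrences in the block-adjacency graph of the single-double leg form of $p$ (which is connected since the word itself visits all blocks). The induction hypothesis then applies.

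For (b), the identity $|X_2|=\sum_{c\neq a}n_c$ with $|X_2|$ odd and no contribution from $a$ forces some $n_c$ to be odd; part~(a) applied to this $c$ yields $\primarypart$, a contradiction. For (c), I induct on $|X_2|$. Suppose $|X_2|\ge 2$ is even and $X_2$ has no consecutive pair. If $a\in X_2$, iteratively split at internal $a$'s to reach a minimal subword $Y$ lying between two consecutive occurrences of $a$ in $p$ and containing no $a$; the case $|Y|=0$ produces $aa$ in $X_2$, contradicting the hypothesis, $|Y|$ odd contradicts~(b), and $|Y|$ even with $|Y|\ge 2$ (inheriting the no-consecutive-pair property) contradicts the induction hypothesis. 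If $a\notin X_2$, set $b:=c_1$ the first letter of $X_2$; by Lemma~\ref{RemSizeTwo} and~(a) applied with anchor $a$, the count of $b$ in $X_2$ is even and at least $2$, so let $i_2$ be the second position of $b$ in $X_2$. The subword $Z:=X_2[2{:}i_2-1]$ is nonempty, contains no $b$, and inherits the no-consecutive-pair property; applying (b) when $|Z|$ is odd, or the induction hypothesis of (c) when $|Z|\ge 2$ is even, now with anchor $b$, yields $\primarypart$.

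The main obstacle is the inductive step of (a), specifically the situation where the path in the block-adjacency graph from $c$ to the intended merge target is forced to pass through $b$: a naive merge would then drag $b$ along and destroy the odd parity on which the whole argument hinges. The way around is the symmetrization trick $p\otimes\bar p$ used in the proof of Lemma~\ref{LemABA}: coupling off the outer portions $X_1$ with $\bar X_1'$ and $X_3$ with $\bar X_3'$ by successive pair partitions produces a palindromic reduct of $p$ in which the obstruction block $b$ can be effectively bypassed, at the cost of doubling the length. Handling this carefully, while keeping the induction measure monotone, is the technical core of the argument.
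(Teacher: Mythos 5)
There is a genuine gap in your proof of part (a), and you flag it yourself without closing it. Your inductive scheme requires merging an auxiliary block $c$ into the block of $b$ (or of $a$) via Lemma~\ref{LemBlockVerbinden}, but that lemma only connects blocks having two \emph{neighbouring} points; merging along a path in the block-adjacency graph necessarily absorbs every intermediate block on the path, and those blocks have uncontrolled parities inside $X_2$, so the invariant ``$b$ occurs an odd number of times in $X_2$'' is not preserved. Your proposed rescue -- tensoring with $\bar p$ and coupling off the outer portions to ``bypass'' the obstruction -- is not carried out: it is not explained how the palindromic reduct avoids dragging $b$ into the merge, and the doubling of length is in direct tension with your induction on $|X_2|$. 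Since (b) and (c) are reduced to (a), the gap propagates to the whole lemma.

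The missing idea is that no merging of the auxiliary letters is needed at all. Write $p=X_1aZ_1bZ_2b\ldots bZ_kbZ_{k+1}aX_3$ with the $Z_i$ free of $a$ and $b$, and instead of relocating the letters of $Z_i$ into existing blocks, \emph{erase} them pairwise by composing with $\idpart^{\otimes\alpha}\otimes\baarpart\otimes\idpart^{\otimes\beta}$; this reduces each $Z_i$ to a word $Z_i'$ of length $0$ or $1$. Any surviving singleton $Z_i'=x$ with $2\le i\le k$ sits between two copies of $b$, so $bxb$ is a subword and Lemma~\ref{LemABA} yields $\primarypart\in\cC$, a contradiction; hence $Z_2',\dots,Z_k'$ are empty and $p$ contains $aZ_1'b^kZ_{k+1}'a$. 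Connecting $Z_1'$ and $Z_{k+1}'$ to the block of $a$ (now legitimately by Lemma~\ref{LemBlockVerbinden}, since they are adjacent to $a$) produces $ab^ka$ as a subword, and Lemma~\ref{LemABA} forces $k$ even. This two-step use of Lemma~\ref{LemABA} replaces your entire double induction and the adjacency-graph machinery. Your deductions of (b) from (a) and of (c) by induction are essentially sound once (a) is repaired.
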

\begin{proof}
 (a) We may assume that $X_2$ does not contain the letter $a$ (otherwise, we study $aX_2a=aY_1aY_2a\ldots aY_ma$ sectionwise). Let $b\neq a$ be a letter appearing in $X_2$ exactly  $k$ times. Hence, $p$ is of the following form:
 \[p=X_1aZ_1bZ_2b\ldots b Z_k b Z_{k+1} a X_3\]
 We apply the pair partition to the subwords $Z_i$ (note that $Z_i$ does not contain $a$ nor $b$), such that the following partition is in $\cC$, where the subwords $Z'_i$ have length either one or zero:
 \[p=X_1aZ'_1bZ'_2b\ldots b Z'_k b Z'_{k+1} a X_3\]
By Lemma \ref{LemABA}, the subwords $Z'_i$ must be empty for $i=2,\ldots, k$, thus $p$ is of the form:
 \[p=X_1aZ'_1b^k Z'_{k+1} a X_3\]
Using Lemma \ref{LemBlockVerbinden}, we connect the letters in $Z'_1$ and $Z'_{k+1}$ (if the subwords are non-empty) to the block of the letter $a$, and again by Lemma \ref{LemABA}, we infer that $k$ is even.

(b) Apply (a).

(c) If $X_2$ is of length two, it must be of the form $X_2=bb$ for some letter $b$, by (a). Otherwise, (a) implies that $X_2$ is of the form $X_2=Z_1bZ_2bZ_3$ for some letter $b$, where $Z_2$ does not contain $b$. Now, either $Z_2$ is empty, or it contains a pair by induction.
\end{proof}

By Lemma \ref{RemSizeTwo}, in hyperoctahedral categories all  partitions have even length. In non group-theoretical hyperoctahedral categories we have an even stronger statement: all blocks have even length.

\begin{lemma}\label{LemBlockLengthAllgem}
Let $\cC$ be a non group-theoretical hyperoctahedral category and let $p\in P(0,n)$ be a partition in $\cC$. Then each block of $p$ is of even length. 
\end{lemma}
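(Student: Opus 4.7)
The plan is to apply the Doubling Lemma (Lemma~\ref{LemABA2}(a)) cyclically and combine it with Lemma~\ref{RemSizeTwo} to force a parity contradiction. Suppose, for contradiction, that some block $A$ of $p$ has odd length $m$. Since $\singleton\otimes\singleton\notin\cC$, Lemma~\ref{RemSizeTwo} gives $m\geq 2$, so in fact $m\geq 3$.

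After rotation I view $p$ as a word $p=s_1\dotsb s_n$ and let $a$ be the letter attached to the block $A$. The $m$ occurrences of $a$ in $p$, at positions $i_1<i_2<\dotsb<i_m$, partition the remaining positions into $m$ cyclic gaps $X^{(1)},\dotsc,X^{(m)}$, where $X^{(j)}$ lies strictly between $i_j$ and $i_{j+1}$ with indices taken modulo $m$. For each $j<m$ the gap $X^{(j)}$ is already a linear subword of $p$ that contains no further $a$, so writing $p=X_1\,a\,X^{(j)}\,a\,X_3$ and invoking the Doubling Lemma yields that every letter $b\neq a$ appears an even number of times in $X^{(j)}$. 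For the wrap-around gap $X^{(m)}$ I first rotate $p$ cyclically so that position $i_m$ becomes the first letter of a new partition $p'\in\cC$; then $p'$ has the form $a\,X^{(m)}\,a\,Y$ for some suffix $Y$, and the Doubling Lemma applies in exactly the same way.

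Summing over all $m$ gaps, every block with letter $b\neq a$ has even total length in $p$, so $|p|=m+\sum_{b\neq a}|b|$ is the sum of an odd number and an even number, hence odd. This contradicts Lemma~\ref{RemSizeTwo}, which asserts that every partition in $\cC$ has even length. Therefore $m$ must be even, as claimed.

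The main delicacy is the cyclic wrap-around: one has to rotate $p$ explicitly before applying the Doubling Lemma to the gap $X^{(m)}$, since the lemma is stated for a linear presentation $X_1\,a\,X_2\,a\,X_3$. The two structural hypotheses on $\cC$ enter precisely through the two invocations above -- the Doubling Lemma requires $\cC$ to be hyperoctahedral and non group-theoretical, while Lemma~\ref{RemSizeTwo} requires $\singleton\otimes\singleton\notin\cC$.
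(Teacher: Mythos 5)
Your proof is correct and follows essentially the same route as the paper: both apply the Doubling Lemma to the gaps between consecutive occurrences of the letter $a$ (handling the wrap-around gap by rotation) and then extract a parity contradiction via Lemma~\ref{RemSizeTwo}. The only difference is the endgame -- the paper erases the even-length gaps with the pair partition to reduce to a partition of the form $a^kX_k'$ and then invokes the no-singleton clause of Lemma~\ref{RemSizeTwo}, whereas you count occurrences of the other letters directly and use the even-length clause; both steps are valid.
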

\begin{proof}
Let $V$ be a block of $p$ labelled by the letter $a$. By rotation, $p$ is of the form:
\[p=aX_1aX_2a\ldots aX_{k-1}aX_k\]
The subwords $X_i$ do not contain $a$ and hence (by the Doubling Lemma \ref{LemABA2}) they are of even length. Using the pair partition, we infer that the partition $p'=a^kX_k'$ is in $\cC$, where $X_k'$ is some subword without the letter $a$. Since $\singleton\otimes\singleton\notin\cC$, we deduce that $k$ is even, hence $V$ is of even length.
\end{proof}

We deduce that all partitions $p$ in a non group-theoretical hyperoctahedral category $\cC$ are of a very special form. Suppose that $p\in P(0,n)$ is a word starting with the letter $a$. By Lemma \ref{LemBlockLengthAllgem},  the letter $a$ appears at least a second time in the word $p$, hence $p$ is of the form:
\[p=a X a Y\]
We may assume that $X$ does not contain the letter $a$, hence it is a subword starting with a letter $b$. By the Doubling Lemma \ref{LemABA2}, the letter $b$ appears in $X$ at least a second time, hence $p$ is of the form $p=abX'bX''aY$. Thus, a typical word in a non group-theoretical hyperoctahedral category is of the form:
\[p=abccddbaeffghhgeabba\]
We see that these partitions consist of pairs of letters which are nested into each others -- a structure which resembles the one of the noncrossing pair partitions $NC_2$ very much. The crucial difference is, that the pairs in our case can be connected to other pairs of the word (letters may appear more than only twice) -- even in a way involving crossings. Hence, the partitions in our focus can be thought of as being built in two steps: First, take a noncrossing pair partition on $n$ points, and then build blocks by combining some of these pairs. The question is, which linking of pairs is allowed.

As a tool for our imagination, let us recall the bijection between noncrossing pair partitions and Dyck paths. In our case, we only have a surjection from  partitions $p\in\cC$ (where $\cC$ is non group-theoretical and hyperoctahedral) to Dyck paths by decoding a partition $p=a_{i(1)}a_{i(2)}\ldots a_{i(n)}\in P(0,n)$ in the following way. We read the word letter by letter and we add an $(1,-1)$-move to the path at the $k$-th position, if the
$k$-th letter appears an odd number of times in the subword $a_{i(1)}\ldots a_{i(k)}$. Otherwise, we add a $(1,1)$-move. This way, we obtain a Dyck path of length $n$ (note that all blocks have even length). This assignment is not injective, but it illustrates how partitions in non group-theoretical categories look like. As an example, consider the above partition:
\[p=abccddbaeffghhgeabba\]
It yields the following Dyck path:

\setlength{\unitlength}{0.5cm}
\newsavebox{\Dyckpath}
\savebox{\Dyckpath}
{\begin{picture}(20,3)
    \put(0,3){\line(1,-1){1}} 
    \put(1,2){\line(1,-1){1}} 
    \put(2,1){\line(1,-1){1}} 
    \put(3,0){\line(1,1){1}}  
    \put(4,1){\line(1,-1){1}} 
    \put(5,0){\line(1,1){1}}  
    \put(6,1){\line(1,1){1}}  
    \put(7,2){\line(1,1){1}}  
    \put(8,3){\line(1,-1){1}} 
    \put(9,2){\line(1,-1){1}} 
    \put(10,1){\line(1,1){1}} 
    \put(11,2){\line(1,-1){1}}
    \put(12,1){\line(1,-1){1}}
    \put(13,0){\line(1,1){1}} 
    \put(14,1){\line(1,1){1}} 
    \put(15,2){\line(1,1){1}} 
    \put(16,3){\line(1,-1){1}}
    \put(17,2){\line(1,-1){1}}
    \put(18,1){\line(1,1){1}} 
    \put(19,2){\line(1,1){1}} 
\end{picture}}
\newsavebox{\Dyckpathpoints}
\savebox{\Dyckpathpoints}
{\begin{picture}(20,3)
    \put(0,3){$\circ$}
    \put(1,2){$\circ$}
    \put(2,1){$\circ$}
    \put(3,0){$\circ$}
    \put(4,1){$\circ$}
    \put(5,0){$\circ$}
    \put(6,1){$\circ$}
    \put(7,2){$\circ$}
    \put(8,3){$\circ$}
    \put(9,2){$\circ$}
    \put(10,1){$\circ$}
    \put(11,2){$\circ$}
    \put(12,1){$\circ$}
    \put(13,0){$\circ$}
    \put(14,1){$\circ$}
    \put(15,2){$\circ$}
    \put(16,3){$\circ$}
    \put(17,2){$\circ$}
    \put(18,1){$\circ$}
    \put(19,2){$\circ$}
    \put(20,3){$\circ$}
\end{picture}}

\begin{center}
  \begin{picture}(20,6)
   \put(0.2,1.2){\usebox{\Dyckpath}}
   \put(0,1){\usebox{\Dyckpathpoints}}
   \put(0.5,0){$a$}
   \put(1.5,0){$b$}
   \put(2.5,0){$c$}
   \put(3.5,0){$c$}
   \put(4.5,0){$d$}
   \put(5.5,0){$d$}
   \put(6.5,0){$b$}
   \put(7.5,0){$a$}
   \put(8.5,0){$e$}
   \put(9.5,0){$f$}
   \put(10.5,0){$f$}
   \put(11.5,0){$g$}
   \put(12.5,0){$h$}
   \put(13.5,0){$h$}
   \put(14.5,0){$g$}
   \put(15.5,0){$e$}
   \put(16.5,0){$a$}
   \put(17.5,0){$b$}
   \put(18.5,0){$b$}
   \put(19.5,0){$a$}
  \end{picture}
\end{center}

Note that the following partition yields the same path:
\[q=abccddbaaeebccbaijji\]

Recall that the partition $\pi_k\in P(0,4k)$ was defined by:
\[\pi_k:=a_1a_2\ldots a_ka_k\ldots a_2a_1a_1a_2\ldots a_ka_k\ldots a_2a_1\]
It gives rise to a very simple Dyck path which can be pictured as a ``W of depth $k$''. Here, $a_1a_2\ldots a_k$ describes the first of the four legs of the letter W. This motivates the next definition. Recall that we want to classify all non group-theoretical hyperoctahedral categories by ``distorted versions of $\pi_k$''. The philosophy is that a partition which contains a W of depth $k$ (in the following sense) also ``contains'' the structure of $\pi_k$ (to be made more precise in Lemma \ref{LemExistenzVonPi}). In the next definition, it is helpful to view the subwords $Y_i$ and $X_j^\alpha,X_j^\beta, X_j^\gamma,X_j^\delta$ as ``noise''. If they are all empty,  we exactly obtain the partition $\pi_k$.

\begin{definition}\label{Defwdepth}
 Let $p\in P(0,n)$ be a partition.
 We say that $p$ \emph{contains a W of depth} $k$, if there are $k$ different letters $a_1,\ldots a_k$ such that:
\[p=Y_1\underbrace{a_1X_1^\alpha a_2\ldots a_k}_{S_\alpha} X_k^\alpha \underbrace{a_k X_{k-1}^\beta a_{k-1}\ldots a_1}_{S_\beta} Y_2\underbrace{a_1X_1^\gamma a_2\ldots a_k}_{S_\gamma} X_k^\gamma \underbrace{a_k X_{k-1}^\delta a_{k-1}\ldots a_1}_{S_\delta} Y_3\]
More precisely: 
  \begin{itemize}
   \item[$\bullet$] $p$ is of the form  $p=Y_1S_\alpha X_k^\alpha S_\beta Y_2 S_\gamma X_k^\gamma S_\delta Y_3$,
   \item[$\bullet$] where $S_\alpha=a_1X_1^\alpha a_2X_2^\alpha \ldots a_k$, for some subwords $X_i^\alpha$,
   \item[$\bullet$] $S_\beta=a_k X_{k-1}^\beta a_{k-1}\ldots X_2^\beta a_2  X_1^\beta a_1$,
   \item[$\bullet$] $S_\gamma=a_1X_1^\gamma a_2X_2^\gamma \ldots a_k$,
   \item[$\bullet$] and $S_\delta=a_k X_{k-1}^\delta a_{k-1}\ldots X_2^\delta a_2 X_1^\delta a_1$.
   \item[$\bullet$] Furthermore, each letter $a_i$ for $i=1,\ldots k$ appears an \emph{odd number of times} in the subword $S_\alpha$, and likewise for $S_\beta, S_\gamma$ and $S_\delta$. (Note that  $a_i$ could appear in some subwords $X_i^\alpha,X_i^\beta,\ldots$)
  \end{itemize} 
  In this sense, $S_\alpha$, $S_\beta$, $S_\gamma$ and $S_\delta$ form the four legs of the (distorted) letter W. 
  We denote by $\wdepth(p)\in\NN_0$ the maximal $k$ such that a rotated version of $p$ contains a W of depth $k$. Recall that if $p$ is of the form
  \[p=b_{i(1)}b_{i(2)}\ldots b_{i(n)}\]
the partitions 
 \[b_{i(r)} b_{i(r+1)}\ldots b_{i(n)} b_{i(1)} b_{i(2)} \ldots b_{i(r-1)}\]
 are called rotated versions of $p$.
\end{definition}

\begin{remark} \label{RemPropertieswdepth}
We list some properties of $\wdepth$. 
 \begin{itemize}
 \item[(a)] We have $\wdepth(\pi_k)=k$ with $Y_i=X_j^\zeta=\emptyset$ for all $i,j$ and all $\zeta\in\{\alpha,\beta,\gamma,\delta\}$ in the above definition.
 \item[(b)] In the above example with
\[p=abccddbaeffghhgeabba,\quad\textnormal{and}\quad q=abccddbaaeebccbaijji\]
 we have $\wdepth(p)=2$ (it contains a $W$ on $a$ and $b$) and $\wdepth(q)=3$ (it contains a $W$ on $a, b$ and $c$). The numbers $\wdepth$ can not be read directly from the Dyck paths itself, we need a coloring of the paths in order to obtain them in a precise way. In this article, we use the Dyck paths only for the reason of illustrating the structures of partitions in non group-theoretical hyperoctahedral categories -- we do not use them for rigorous proofs.
 \item[(c)] We have $\wdepth(p)\leq \#\{\textnormal{blocks of }p\}$ for all $p\in P(0,n)$.
 \item[(d)] Let $p\in P(0,n)$ be a partition containing a block of length at least  four, then $\wdepth(p)\geq 1$.
 \item[(e)] If $p\in P(0,n)$ is noncrossing, then $\wdepth(p)\leq 1$. Conversely, if $\cC$ is a non group-theoretical hyperoctahedral category and $p\in\cC$ fulfills $\wdepth(p)\leq 1$, then $p$ is noncrossing (and hence $p\in\langle\vierpart\rangle$). Indeed, if $\wdepth(p)=0$, then  all blocks of $p$ are of length two by (d) and Lemma \ref{LemBlockLengthAllgem}. Hence, $p$ is a pair partition which is noncrossing due to the Doubling Lemma \ref{LemABA2}. For the case $\wdepth(p)=1$, assume that $p$ contains two crossing blocks. Hence $p$ is of the form (after rotation):
 \[p=aX_1bX_2aX_3bX_4\]
 By the Doubling Lemma \ref{LemABA2}, we infer that $X_1bX_2$ must contain the letter $b$ a second time. By rotation, the same is true for $X_3bX_4$ and likewise for the letter $a$ in the subwords in between two letters $b$. We finally deduce that $p$ is of the form:
 \[p=aY_1bY_2bY_3aY_4aY_5bY_6bY_7aY_8\]
 Here, we may assume that $Y_1$, $Y_3$, $Y_5$ and $Y_7$ do not contain the letters $a$ and $b$. Thus, $p$ contains a W of depth 2, contradicting $\wdepth(p)=1$.
 \item[(f)] The definition of $\wdepth$ is made in such a way that a partition $p$ with $\wdepth(p)=k$ is ``like $\pi_k$''. For this, the requirement on the letters to appear an odd number of times in the subwords $S_\alpha, S_\beta, S_\gamma$ and $S_\delta$ is crucial.  For instance, the partition 
\[a_1^2a_2^2\ldots a_k^2a_k^2\ldots a_2^2a_1^2a_1^2a_2^2\ldots a_k^2a_k^2\ldots a_2^2a_1^2\]
 is not like $\pi_k$, it is rather like a rotated version of $\pi_2$ (compare for instance the Dyck paths as a ``minimal'' requirement for a similarity). The point is, that it is not difficult to permute pairs of points in partitions in non group-theoretical hyperoctahedral categories (since they contain $\fatcrosspart$ whenever they are not $\langle\vierpart\rangle$). The important structure is the one of those nested pairs which are somehow ``trapped'' between single legs.
 \end{itemize}
\end{remark}

\section{Classification of non group-theoretical hyperoctahedral categories} \label{SectWeakSimpl}

In this section we classify non group-theoretical hyperoctahedral categories.
Let us give a motivation of one of the main techniques in the classification. Let $p$ be a partition containing a W of depth $k$. Then $p$ is of the form (after rotation):
\[p=(a_1 X_1^\alpha \ldots a_k) X_k^\alpha (a_k \ldots X_1^\beta a_1) Y_1
(a_1 X_1^\gamma \ldots a_k) X_k^\gamma (a_k \ldots X_1^\delta a_1) Y_2\]
Fix some $1\leq j\leq k-1$. We consider the following two subwords of $p$:
\[a_j X_j^\alpha a_{j+1} \ldots a_k X_k^\alpha a_k \ldots a_{j+1} X_j^\beta a_j\]
and
\[a_{j+1} \ldots a_k X_k^\alpha a_k \ldots a_{j+1}\]
Now, let us assume that the subword $X_j^\alpha$  contains a letter $x$ an odd number of times, and $x\neq a_j$, $x\neq a_{j+1}$. Since $x$ appears an even number of times in each of the  above two subwords, we infer that it must appear in $X_j^\beta$ an odd number of times, too. We proceed in the same way to show that it appears in  $X_j^\gamma$ and $X_j^\delta$ again an odd number of times. 
This argument will be used in the following proof in order to construct a  W of depth $k+1$ in $p$. The following scheme illustrates this ``odd and even''-technique.

\setlength{\unitlength}{0.5cm}
\begin{center}
  \begin{picture}(27,8)
   \put(2.7,7){$S_\alpha$}
   \put(-0.3,5.5){\uppartii{1}{0}{6}}
   \put(9.8,7){$S_\beta$}
   \put(6.8,5.5){\uppartii{1}{0}{6}}
   \put(16.8,7){$S_\gamma$}
   \put(13.8,5.5){\uppartii{1}{0}{6}}
   \put(23.8,7){$S_\delta$}
   \put(20.8,5.5){\uppartii{1}{0}{6}}
   \put(0,5.5){\line(1,0){27}}
   \put(0,5){$_1$}
   \put(1.9,5){$_j$}
   \put(2.9,5){$_{j+1}$}
   \put(5.9,5){$_k$}
   \put(7,5){$_k$}
   \put(9.1,5){$_{j+1}$}
   \put(10.9,5){$_j$}
   \put(12.9,5){$_1$}
   \put(14,5){$_1$}
   \put(16.4,5){$_j$}
   \put(17.5,5){$_{j+1}$}
   \put(20,5){$_k$}
   \put(21,5){$_k$}
   \put(23.8,5){$_{j+1}$}
   \put(25.5,5){$_j$}
   \put(26.8,5){$_1$}
   \put(1.8,5.5){\partii{1}{0}{1}}
   \put(2.1,3.8){$_{\textnormal{odd}}$}
   \put(3.2,5.5){\partii{1}{0}{6}}
   \put(6,3.8){$_{\textnormal{even}}$}
   \put(9.5,5.5){\partii{1}{0}{1}}
   \put(9.8,3.8){$_{\textnormal{odd}}$}
   \put(1.7,4.3){\partii{1}{0}{9}}
   \put(6,2.6){$_{\textnormal{even}}$}
   \put(11.1,4.3){\partii{1}{0}{5}}
   \put(13.4,2.6){$_{\textnormal{even}}$}
   \put(16.5,4.3){\partii{1}{0}{1}}
   \put(16.8,2.6){$_{\textnormal{odd}}$}
   \put(9.5,3.1){\partii{1}{0}{8}}
   \put(13.4,1.4){$_{\textnormal{even}}$}
   \put(17.9,3.1){\partii{1}{0}{6}}
   \put(20.7,1.4){$_{\textnormal{even}}$}
   \put(24.3,3.1){\partii{1}{0}{1}}
   \put(24.6,1.4){$_{\textnormal{odd}}$}
   \put(16.4,1.9){\partii{1}{0}{9}}
   \put(20.7,0.2){$_{\textnormal{even}}$}
  \end{picture}
\end{center}

\begin{lemma}\label{LemExistenzVonPi}
Let $\cC$ be a non group-theoretical hyperoctahedral category. If $\cC$ contains a partition $p\in P(0,n)$  with $\wdepth(p)=k$, then $\pi_k\in\cC$.
\end{lemma}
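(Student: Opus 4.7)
The plan is to reduce $p$ to $\pi_k$ via a sequence of category operations performed inside $\cC$, producing $\pi_k$ as the end result. If $\cC=\langle\vierpart\rangle$ then $k\leq 1$ and $\pi_1=\vierpart\in\cC$, so I may assume $\cC\neq\langle\vierpart\rangle$; by Proposition~\ref{PropFat}, then, $\fatcrosspart\in\cC$. After rotating $p$, I display the W of depth $k$ explicitly as
\[p = Y_1 \, S_\alpha \, X_k^\alpha \, S_\beta \, Y_2 \, S_\gamma \, X_k^\gamma \, S_\delta \, Y_3,\]
on letters $a_1,\dots,a_k$, and I put $p$ in single-double leg form by Proposition~\ref{LemReduzieren2}, noting that the parity of letter counts in each leg is preserved.

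The first key step is to exploit the \emph{maximality} of $\wdepth(p)=k$. Suppose, for contradiction, some letter $x\notin\{a_j,a_{j+1}\}$ appeared an odd number of times in $X_j^\alpha$. The odd/even technique sketched just before the lemma statement would force the same parity in $X_j^\beta$, $X_j^\gamma$ and $X_j^\delta$, producing a W of depth $k+1$ on $a_1,\dots,a_j,x,a_{j+1},\dots,a_k$. This contradicts the maximality of $k$. Hence in every $X_j^\zeta$, every letter other than $a_j$ and $a_{j+1}$ occurs an even number of times; an analogous constraint for the outer subwords $Y_i$ follows by applying the same argument to suitably rotated versions of $p$ and invoking Lemma~\ref{LemVertRefl} when needed.

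Next I eliminate all noise letters, i.e.\ letters outside $\{a_1,\dots,a_k\}$. Since each such letter $x$ appears an even number of times in each subword $Y_i$ or $X_j^\zeta$, I use $\fatcrosspart$ to relocate two of its occurrences into an adjacent double-leg $xx$, then apply $\baarpart$ to erase the pair; iterating removes all noise. An analogous procedure, using the odd count of each $a_i$ in every leg together with the parity observation above, reduces the multiplicity of each $a_i$ to exactly one per $S_\zeta$ (hence to four in total). Once only the letters $a_1,\dots,a_k$ remain, each appearing four times in a W-structure of depth $k$, a final sequence of $\fatcrosspart$-swaps within each leg brings the letters into the canonical order of $\pi_k$.

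The main obstacle is ensuring that every intermediate partition remains in $\cC$: because $\primarypart\notin\cC$, no intermediate may even cyclically contain a pattern $ab^{2m+1}aX$ with $a\neq b$, since by Lemma~\ref{LemABA} this would yield $\primarypart\in\cC$. This restricts when pair partitions and $\fatcrosspart$-moves may be sequenced. The parity constraints from the maximality argument above, combined with the Doubling Lemma~\ref{LemABA2}, guarantee the existence of a valid schedule of operations, but the concrete choice of which swaps to perform (and in which order) calls for a careful inductive bookkeeping---for instance, on the excess length $|p|-4k$ or on the total number of noise occurrences---to make sure no forbidden $aba$-pattern is ever created along the way.
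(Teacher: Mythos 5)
Your strategy differs fundamentally from the paper's: you try to \emph{transform} $p$ into $\pi_k$ by an explicit sequence of category operations, whereas the paper argues by minimality --- it takes $p$ to be a partition of \emph{minimal length} in $\cC$ containing a W of depth $k$ and shows that this minimality forces all the noise subwords $Y_i$ and $X_j^\zeta$ to be empty, so that $p$ simply \emph{is} a rotated version of $\pi_k$. That indirect route is what lets the paper avoid the scheduling problem you run into, and your version has two genuine gaps. First, your maximality step is incomplete: from ``$x$ appears an odd number of times in $X_j^\alpha$'' the odd/even technique does propagate odd multiplicity of $x$ to $X_j^\beta,X_j^\gamma,X_j^\delta$, but a W of depth $k+1$ requires $x$ to appear an odd number of times in the \emph{whole leg} $S_\alpha$, and $x$ may also occur in other subwords $X_i^\alpha$ of the same leg, spoiling the parity. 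The paper circumvents this by first using minimality (its Steps 1--2) to show that no leg contains a consecutive pair, whence the Doubling Lemma~\ref{LemABA2} forces each noise letter to occur at most once per leg; only then does the parity argument close. You have no analogous control.

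Second, and more seriously, the step ``relocate two occurrences of $x$ into an adjacent double leg and erase them'' is exactly the hard part, and it is not justified. The moves available in $\cC$ (composition with $\fatcrosspart$, more generally with the $\pi_l$, and with pair partitions) only commute \emph{double} legs past other double legs or nested groups of them; commuting a double leg past a \emph{single} leg is precisely the $\primarypart$-move, which is unavailable since $\cC$ is not group-theoretical. So the parity constraints do not by themselves guarantee that a ``valid schedule'' of swaps exists, and your closing paragraph essentially concedes that this bookkeeping --- which is the entire content of the lemma --- is missing. (Your worry about ``creating a forbidden $aba$-pattern'' is also misplaced: every intermediate partition obtained by category operations from elements of $\cC$ lies in $\cC$ automatically, so such a pattern cannot arise; the real danger is not that you create forbidden words but that the permitted moves are too weak to realize the rearrangement you want.) To repair the argument you would either have to supply this combinatorial scheduling in full detail, or switch to the paper's extremal argument, which sidesteps it entirely.
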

\begin{proof}
For the case $k=1$, note that the partition $\pi_1=\vierpart$ is in all hyperoctahedral categories.
Let $k\geq 2$ and let $p\in\cC$ be the partition of minimal length $n$ containing a W of depth $k$. We prove that $p$ coincides with a rotated version of $\pi_k$. By definition, there are $k$ different letters $a_1,\ldots a_k$ such that $p$ is of the following form (after rotating):
\[p=S_\alpha X_k^\alpha S_\beta Y_1 S_\gamma X_k^\gamma S_\delta Y_2\]
Here, $S_\alpha=a_1X_1^\alpha a_2X_2^\alpha \ldots a_k$ and likewise $S_\beta, S_\gamma$ and $S_\delta$ are as in Definition \ref{Defwdepth}.
Each letter $a_i$ for $i=1,\ldots k$ appears an odd number of times in the subword $S_\alpha$, and likewise for $S_\beta, S_\gamma$ and $S_\delta$. 

\textbf{\emph{Step 1.} Direct consequences of the minimality of $p$.}

 All subwords $X_i^\zeta$ for all $\zeta\in\{\alpha,\beta,\gamma,\delta\}$ and all $i$, and also all $Y_j$ contain no two consecutive points belonging to the same block. Otherwise, it would contradict the minimality of $p$ since we could apply the pair partition to such a pair. Furthermore, there are no $a_j^3$ in $S_\zeta$. A priori, these subwords could appear for instance as $X_{j-1}^\alpha a_j X_j^\alpha={X'}_{j-1}^\alpha a_ja_j a_j{X'}_j^\alpha$, but we could then use the pair partition to erase $a_j^2$ (contradiction to the minimality of $p$). 
The subwords $Y_1$ and $Y_2$ are either empty or consist just of the single letter $a_1$ -- if $Y_1$ was of even length at least two, we would obtain a consecutive pair by the Doubling Lemma \ref{LemABA2}, since $S_\beta$ ends with $a_1$ and $S_\gamma$ begins with $a_1$. For $X_k^\alpha$ and $X_k^\gamma$, we deduce that they are either empty or consist just of the single letter $a_k$.

\textbf{\emph{Step 2.} No subword $S_\zeta$, $\zeta\in\{\alpha,\beta,\gamma,\delta\}$ contains a consecutive pair of letters.}

Assume the contrary.
This step is the most complicated in the proof. The idea is, that we can now find copies of one of the letters $a_i$ such that we can rearrange the W in $p$ -- and shorten $p$, contradicting the minimality of  $p$. 

Let us consider the case $\zeta=\alpha$. By step 1, we know that no subword $X_i^\alpha$ contains  a consecutive pair of letters. Thus, the pair in $S_\alpha$ needs to be formed of two copies of a letter $a_j$ one of which is either in  $X_{j-1}^\alpha$ or in $X_j^\alpha$. Hence, either $X_j^\alpha=a_j{X'}_j^\alpha$ or $X_{j-1}^\alpha={X'}_{j-1}^\alpha a_j$ for some $j$. Since $a_j$ appears an odd number of times in $S_\alpha$, it must appear at least a third time in $S_\alpha$. It cannot appear again in $X_{j-1}^\alpha$ or $X_j^\alpha$, since this would yield $a_j^3$ by the Doubling Lemma \ref{LemABA2}, in contradiction to step 1. We are thus in the situation that the letter $a_j$ appears at least three times in the subword
\[S_\alpha=a_1X_1^\alpha \ldots X_{j-2}^\alpha a_{j-1} X_{j-1}^\alpha a_j X_j^\alpha a_{j+1} X_{j+1}^\alpha \ldots a_k\]
and exactly twice in $X_{j-1}^\alpha a_j X_j^\alpha$. Hence it appears at least once either in the subword $a_1X_1^\alpha\ldots X_{j-2}^\alpha a_{j-1}$ or in $a_{j+1}X_{j+1}^\alpha\ldots a_k$.

First assume that $a_j$ appears in the subword $a_1X_1^\alpha  \ldots X_{j-2}^\alpha a_{j-1}$. Let $r$ be the smallest number such that $a_j$ appears in $X_r^\alpha$ (it can appear at most once in $X_r^\alpha$ by the Doubling Lemma \ref{LemABA2} since this subword contains no consecutive pairs by step 1). Since $a_j$ appears in $S_\alpha$ an odd number of times, it must appear an even number of times in the subword $a_{r+1}X_{r+1}^\alpha\ldots a_k$. On the other hand, it appears an even number of times in the subword 
\[a_{r}X_{r}^\alpha a_{r+1}X_{r+1}^\alpha\ldots  a_kX_k^\alpha a_k\ldots  X_{r+1}^\beta a_{r+1} X_r^\beta a_r,\]
as well as in the subword 
\[a_{r+1}X_{r+1}^\alpha\ldots  a_kX_k^\alpha a_k\ldots  X_{r+1}^\beta a_{r+1},\]
by the Doubling Lemma \ref{LemABA2}. Hence, it must appear exactly once in $X_r^\beta$ (see the explanation preceding the proof).

The same holds true for the cases $\gamma$ and $\delta$: Since $a_j$ appears an even number of times in
\[a_{r+1} X_{r}^\beta a_r X_{r-1}^\beta \ldots X_1^\beta a_1 Y_1  a_1 X_1^\gamma \ldots X_{r-1}^\gamma a_r X_r^\gamma a_{r+1},\]
as well as in 
\[ a_r X_{r-1}^\beta \ldots X_1^\beta a_1 Y_1  a_1 X_1^\gamma \ldots X_{r-1}^\gamma a_r,\]
it must appear in $X_r^\gamma$. In the same way, we deduce that $a_j$ appears in $X_r^\delta$. Now, using the pair partition, we erase the pair $a_j^2$ in the subword $X_{j-1}^\alpha a_j X_j^\alpha$ and we obtain a partition in $\cC$ containing a W of depth $k$ (where now the ``W-letters'' are given in the order $a_1,\ldots,a_r,a_j,a_{r+1},\ldots,a_{j-1},a_{j+1},\ldots,a_k$), contradicting the minimality of $p$.

We infer, that the third appearance of the letter $a_j$ in $S_\alpha$ is not in $a_1X_1^\alpha\ldots X_{j-2}^\alpha a_{j-1}$ but in the subword $a_{j+1}X_{j+1}^\alpha\ldots a_k$. Taking $s$ to be the maximal index such that $a_j$ appears in $X_s^\alpha$, we proceed in the same way as above to infer that $a_j$ appears in the other $X_s^\zeta$ for $\zeta\in\{\beta,\gamma,\delta\}$, too. This contradicts the minimality of $p$ and we conclude that no string $S_\zeta$ contains a consecutive pair of letters.

\textbf{\emph{Step 3.} No subword $X_i^\zeta$ contains a letter $a_j$.}

Assume that $a_j$ appears in $X_i^\zeta$ for some $i<k$ and some $\zeta$. Then, $S_\zeta$ contains a pair of consecutive points (using the Doubling Lemma \ref{LemABA2}), contradicting step 2. Furthermore, if $X_k^\alpha=a_k$, then $a_k$ appears an odd number of times in the string $a_{k-1}X_{k-1}^\alpha a_k X_k^\alpha a_k X_{k-1}^\beta a_{k-1}$, which contradicts the Doubling Lemma \ref{LemABA2}. Likewise we see that $X_k^\gamma$, $Y_1$ and $Y_2$ are empty.

\textbf{\emph{Step 4.} All $X_i^\zeta$ are empty.} 

Assume that $X_i^\alpha$ contains a letter $x\notin\{a_1,\ldots,a_k\}$. This letter appears only once in $S_\alpha$ since there would be a consecutive pair in $S_\alpha$ otherwise (by the Doubling Lemma \ref{LemABA2}). Proceeding like in step 2 (see also the motivation for this proof), we infer that $x$ must appear in the other $X_i^\zeta$ for $\zeta\in\{\beta,\gamma,\delta\}$, too. Hence, $p$ contains a W of depth $k+1$, which is a contradiction to $\wdepth(p)=k$.

We finally conclude that $p$ is of the form
\[p=S_\alpha X_k^\alpha S_\beta Y_1 S_\gamma X_k^\gamma S_\delta Y_2\;,\]
where 
\[S_\alpha=a_1X_1^\alpha a_2X_2^\alpha \ldots a_k=a_1a_2\ldots a_{k-1}a_k\;,\]
and similarly for $S_\beta, S_\gamma$ and $S_\delta$. Furthermore $X_k^\alpha=Y_1=X_k^\gamma=Y_2=\emptyset$ by step 3. Thus, $p=\pi_k$.
\end{proof}

For the converse direction, we use the following idea. Take a partition $p\in P(0,2n)$ in a non group-theoretical hyperoctahedral category. We want to prove that it is in  $\langle\pi_l,l\leq k\rangle$, whenever $\wdepth(p)\leq k$. The lower tips of the Dyck path associated to $p$ form pairs of consecutive letters, hence we are in situations $p=XaaY$. By induction with respect to the length of $p$ we may assume $q:=XY\in\langle\pi_l,l\leq k\rangle$.
If the block corresponding to the letter $a$ is of length two, we are done -- simply compose $q$ with $\idpart^{\otimes\alpha}\otimes\paarpart\otimes\idpart^{\otimes\beta}$ for suitable $\alpha$ and $\beta$. The resulting partition, namely $p$, is in $\langle\pi_l,l\leq k\rangle$.

Now, if the letter $a$ appears more than twice, $p$ is of the following form (after rotation):
\[p=aaXaYaZ\]
Again, we assume $p'=XaYaZ\in\langle\pi_l,l\leq k\rangle$ by induction hypothesis. Composing $p'$ with $\idpart^{\otimes\alpha}\otimes\paarpart\otimes\idpart^{\otimes\beta}$ and connecting neighbouring blocks (see Lemma \ref{LemBlockVerbinden}) yields that the following partition is in $\langle\pi_l,l\leq k\rangle$:
\[q=XaaaYaZ\]
We are left with the problem of shifting the pair $aa$ from the right hand side of $X$ to the left hand side of it in order to obtain $p\in\langle\pi_l,l\leq k\rangle$. For doing so, note that the following partition -- a rotated version of $\pi_k\in P(0,4k)$ -- enables us to let the words $a_2\ldots a_ka_k\ldots a_2$ and $a_1a_1$ commute (via composition):
\begin{align*}
\begin{pmatrix} (a_1 a_1) & (a_2 \ldots a_k a_k \ldots a_2)\\
(a_2 \ldots a_k a_k \ldots a_2) &(a_1 a_1) \end{pmatrix} \in P(2k,2k)
\end{align*}
The problem is, that $X$ is not a product of ``proper V's'', i.e. of words of the form $a_2\ldots a_ka_k\ldots a_2$. Nevertheless, analyzing the Dyck path structure of $X$, we can view it as a product $X_1X_2\ldots X_l$ of ``skew V's''.
As an example, consider:
\[X=bccdeeffdb\]
Its Dyck path is of the form:

\setlength{\unitlength}{0.5cm}
\newsavebox{\DyckpathZwei}
\savebox{\DyckpathZwei}
{\begin{picture}(10,3)
    \put(0,3){\line(1,-1){1}} 
    \put(1,2){\line(1,-1){1}} 
    \put(2,1){\line(1,1){1}} 
    \put(3,2){\line(1,-1){1}}  
    \put(4,1){\line(1,-1){1}} 
    \put(5,0){\line(1,1){1}}  
    \put(6,1){\line(1,-1){1}}  
    \put(7,0){\line(1,1){1}}  
    \put(8,1){\line(1,1){1}} 
    \put(9,2){\line(1,1){1}} 
\end{picture}}
\newsavebox{\DyckpathpointsZwei}
\savebox{\DyckpathpointsZwei}
{\begin{picture}(10,3)
    \put(0,3){$\circ$}
    \put(1,2){$\circ$}
    \put(2,1){$\circ$}
    \put(3,2){$\circ$}
    \put(4,1){$\circ$}
    \put(5,0){$\circ$}
    \put(6,1){$\circ$}
    \put(7,0){$\circ$}
    \put(8,1){$\circ$}
    \put(9,2){$\circ$}   
    \put(10,3){$\circ$}
\end{picture}}

\begin{center}
  \begin{picture}(10,5)
   \put(0.2,1.2){\usebox{\DyckpathZwei}}
   \put(0,1){\usebox{\DyckpathpointsZwei}}
  \end{picture}
\end{center}
We can view $X$ as a product of $X_1=bcc$, $X_2=dee$ and $X_3=ffdb$, hence decomposing the Dyck path into:
\setlength{\unitlength}{0.5cm}
\newsavebox{\DyckpathZweib}
\savebox{\DyckpathZweib}
{\begin{picture}(16,3)
    \put(0,3){\line(1,-1){1}} 
    \put(1,2){\line(1,-1){1}} 
    \put(2,1){\line(1,1){1}} 
    \put(5,2){\line(1,-1){1}}  
    \put(6,1){\line(1,-1){1}} 
    \put(7,0){\line(1,1){1}}  
    \put(12,1){\line(1,-1){1}}  
    \put(13,0){\line(1,1){1}}  
    \put(14,1){\line(1,1){1}} 
    \put(15,2){\line(1,1){1}} 
\end{picture}}
\newsavebox{\DyckpathpointsZweib}
\savebox{\DyckpathpointsZweib}
{\begin{picture}(12,3)
    \put(0,3){$\circ$}
    \put(1,2){$\circ$}
    \put(2,1){$\circ$}
    \put(3,2){$\circ$}
    \put(5,2){$\circ$}
    \put(6,1){$\circ$}
    \put(7,0){$\circ$}
    \put(8,1){$\circ$}
    \put(12,1){$\circ$}
    \put(13,0){$\circ$}
    \put(14,1){$\circ$}
    \put(15,2){$\circ$}   
    \put(16,3){$\circ$}
\end{picture}}

\begin{center}
  \begin{picture}(16,5)
   \put(0.2,1.2){\usebox{\DyckpathZweib}}
   \put(0,1){\usebox{\DyckpathpointsZweib}}
  \end{picture}
\end{center}
If we now insert suitable pairs iteratively in $X$ (at some point we have to prove that we can do so), we obtain a product $X'_1\ldots X'_l$ of honest V's such that each $X'_i$ commutes with $aa$.
In our example this would yield:
\[X'=(bccb)(bdeedb)(bdffdb)\]
With associated Dyck path:

\setlength{\unitlength}{0.5cm}
\newsavebox{\DyckpathDrei}
\savebox{\DyckpathDrei}
{\begin{picture}(16,3)
    \put(0,3){\line(1,-1){1}} 
    \put(1,2){\line(1,-1){1}} 
    \put(2,1){\line(1,1){1}} 
    \put(3.2,2.1){$\cdot$} 
    \put(3.4,2.3){$\cdot$}
    \put(3.6,2.5){$\cdot$} 
    \put(4.2,2.5){$\cdot$} 
    \put(4.4,2.3){$\cdot$}
    \put(4.6,2.1){$\cdot$} 
    \put(5,2){\line(1,-1){1}}  
    \put(6,1){\line(1,-1){1}} 
    \put(7,0){\line(1,1){1}}  
    \put(8.2,1.1){$\cdot$} 
    \put(8.4,1.3){$\cdot$}
    \put(8.6,1.5){$\cdot$} 
    \put(9.2,2.1){$\cdot$} 
    \put(9.4,2.3){$\cdot$}
    \put(9.6,2.5){$\cdot$} 
    \put(10.2,2.5){$\cdot$} 
    \put(10.4,2.3){$\cdot$}
    \put(10.6,2.1){$\cdot$} 
    \put(11.2,1.5){$\cdot$} 
    \put(11.4,1.3){$\cdot$}
    \put(11.6,1.1){$\cdot$} 
    \put(12,1){\line(1,-1){1}}  
    \put(13,0){\line(1,1){1}}  
    \put(14,1){\line(1,1){1}} 
    \put(15,2){\line(1,1){1}} 
\end{picture}}
\newsavebox{\DyckpathpointsDrei}
\savebox{\DyckpathpointsDrei}
{\begin{picture}(10,3)
    \put(0,3){$\circ$}
    \put(1,2){$\circ$}
    \put(2,1){$\circ$}
    \put(3,2){$\circ$}
    \put(4,3){$\circ$}
    \put(5,2){$\circ$}
    \put(6,1){$\circ$}
    \put(7,0){$\circ$}
    \put(8,1){$\circ$}
    \put(9,2){$\circ$}
    \put(10,3){$\circ$}
    \put(11,2){$\circ$}
    \put(12,1){$\circ$}
    \put(13,0){$\circ$}
    \put(14,1){$\circ$}
    \put(15,2){$\circ$}
    \put(16,3){$\circ$}
\end{picture}}

\begin{center}
  \begin{picture}(16,5)
   \put(0.2,1.2){\usebox{\DyckpathDrei}}
   \put(0,1){\usebox{\DyckpathpointsDrei}}
  \end{picture}
\end{center}

Recall the definition of partitions in single-double leg form (Definition \ref{DefSingleDouble}).

\begin{lemma}\label{LemWdepthInPi}
 Let $\cC$ be a non group-theoretical hyperoctahedral category of partitions and let $k\geq 1$. Let $p\in P(0,2n)\cap\cC$ be a  partition in single-double leg form with $\wdepth(p)\leq k$. Then $p\in\langle\pi_l,l\leq k\rangle$.
\end{lemma}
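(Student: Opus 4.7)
My plan is to induct on the length $2n$ of $p$. The base case $n=0$ is trivial, and when $\wdepth(p)\leq 1$ Remark \ref{RemPropertieswdepth}(e) forces $p$ to be noncrossing, so $p\in\langle\vierpart\rangle\subseteq\langle\pi_l, l\leq k\rangle$. For the inductive step I first locate a consecutive double leg $aa$ in $p$: such a pair must exist, because if $p$ were in single-double leg form with no double leg at all, then applying the Doubling Lemma \ref{LemABA2}(b,c) to the subword between two consecutive occurrences of any letter $a$ would force this subword to be empty, contradicting the assumed absence of double legs. I then split into two cases according to the size of the $a$-block, which is even by Lemma \ref{LemBlockLengthAllgem}.

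In the easy case the $a$-block has size two. Write $p=XaaY$; erasing the pair $aa$ with $\paarpart$ yields $p'=XY\in\cC$, which can be reduced to single-double leg form inside $\cC$ via Lemma \ref{LemReduzieren1} and satisfies $\wdepth(p')\leq\wdepth(p)\leq k$ (since the block just removed has only two points, it contributes to no W). The inductive hypothesis gives $p'\in\langle\pi_l,l\leq k\rangle$, and then tensoring with $\paarpart$ and rotating the freshly created pair into the correct position reconstructs $p$ inside $\langle\pi_l,l\leq k\rangle$.

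In the harder case the $a$-block has size at least four. After rotation I assume $p=aaXaUaV$, where $X$ does not contain the letter $a$. Removing the initial $aa$ via the pair partition gives $p'=XaUaV\in\cC$ with $\wdepth(p')\leq k$, and by induction $p'\in\langle\pi_l,l\leq k\rangle$. I then tensor with $\paarpart$ and rotate to place a new two-block $\paarpart$ immediately to the left of the first surviving $a$, and I use Lemma \ref{LemBlockVerbinden} to merge this two-block into the adjacent $a$-block. This produces $q=XaaaUaV\in\langle\pi_l,l\leq k\rangle$, in which the $a$-block is back to size four but sits $|X|$ positions to the right of where it is in $p$.

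The remaining task, and the main obstacle, is to slide the extra pair $aa$ across the subword $X$. For any genuine nested V-word $W=b_1b_2\ldots b_rb_r\ldots b_2b_1$ with $r\leq k$, a rotated form of $\pi_r$ supplies a partition in $P(2r+2,2r+2)$ realizing the commutation $(aa)\cdot W\leftrightarrow W\cdot(aa)$, so composing with such a partition moves the pair past one V. In general $X$ is not itself a nested V but a concatenation of skew V's between the local minima of its associated Dyck path; I therefore decompose $X=X_1X_2\cdots X_s$ along those minima and, inserting auxiliary pairs by the $\paarpart$-tensor trick and Lemma \ref{LemBlockVerbinden}, complete each piece $X_i$ to a genuine nested V-word $X_i'$ of depth $r_i$. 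The hypothesis $\wdepth(p)\leq k$ is exactly what bounds every $r_i$ by $k$, so only generators $\pi_l$ with $l\leq k$ are ever required. Sliding $aa$ through $X_1',\ldots,X_s'$ successively and then stripping away the auxiliary pairs delivers $p$ inside $\langle\pi_l,l\leq k\rangle$. The main technical difficulty is making this V-decomposition and the auxiliary bookkeeping rigorous -- in particular, verifying that the global bound $\wdepth(p)\leq k$ really does translate into a uniform cap on all the V-depths $r_i$ encountered along $X$, and that the auxiliary pairs introduced do not inflate the wdepth beyond $k$.
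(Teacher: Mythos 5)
Your overall strategy coincides with the paper's: induct on the length, dispose of size-two blocks by erasing and re-inserting a pair, and otherwise slide an extra pair $aa$ across the subword $X$ by decomposing $X$ into skew V's along its Dyck path and completing each to an honest V that commutes with $aa$ via a rotated $\pi$-partition. However, there is a genuine gap at the decisive point, which you yourself flag but do not resolve: the claim that $\wdepth(p)\leq k$ ``exactly'' bounds the depths $r_i$ of the skew V's occurring in $X$. This is not automatic. By Definition \ref{Defwdepth} a W of depth $m+1$ requires the letters $a,b_1,\dots,b_m$ to recur in four legs $S_\alpha,S_\beta,S_\gamma,S_\delta$ with odd multiplicities; a deep nested V sitting inside $X$ does not by itself witness a large $\wdepth$, so the bound on the $r_i$ does not follow from the hypothesis without further argument. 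The paper's Step 2 proves the bound $m\leq k-1$ for the deepest skew V $X_l=b_s\ldots b_mb_m\ldots b_1$ by (a) choosing the double leg $aa$ subject to a minimality condition on $X$ --- namely that every consecutive pair $bb$ occurring in $X$ has its letter reappearing in the tail $Z$ (otherwise one works with the block of $b$ instead of that of $a$) --- and (b) an ``odd/even'' bookkeeping argument with the Doubling Lemma \ref{LemABA2} showing that each $b_j$ then reappears, with the correct parities, in $Z$, so that $a,b_1,\dots,b_m$ form a W of depth $m+1$ in a rotated version of $p$ and hence $m+1\leq k$. Your proposal makes no choice of $a$ of this kind and supplies no replacement for this argument, so the uniform cap on the $r_i$ is unjustified.

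There is also an off-by-one to fix: to commute $aa$ with a V-word $b_1\ldots b_rb_r\ldots b_1$ of depth $r$ one composes with a rotated version of $\pi_{r+1}$ (which, rotated, lives in $P(2r+2,2r+2)$), not of $\pi_r$; consequently the bound you need on the V-depths is $r\leq k-1$, not $r\leq k$. This is precisely what the paper's Step 2 delivers, and it is what makes the hypothesis $\wdepth(p)\leq k$ the correct one.
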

\begin{proof}
We begin the proof by analyzing some special cases.
If $\wdepth(p)\leq 1$, then $p\in\langle\vierpart\rangle=\langle\pi_1\rangle\subset\langle\pi_l,l\leq k\rangle$ by Remark \ref{RemPropertieswdepth}(e). Furthermore, if $p$ contains only one block, then $\wdepth(p)\leq 1$. Hence, we can restrict to partitions $p$ which contain at least two blocks. If there is at most one block which has length greater or equal to four, then $\wdepth(p)\leq 1$. Therefore, we can restrict to the cases where $\wdepth(p)\geq 2$, $k\geq 2$ and $p$ containing at least two blocks of length greater or equal to four -- thus the length of $p$ is at least eight.
 
\textbf{\emph{Step 1.} Decomposition of partitions in $\CC$.}

We prove the lemma by induction on the length $2n$ of $p$. If $n=4$, then $p$ contains exactly two blocks of length four, and $\wdepth(p)=2$. Hence $p=\pi_2$ up to rotation, thus $p\in\langle\pi_l,l\leq k\rangle$. 

In the induction step, we assume that $p$ contains no pairs of consecutive letters appearing only as a block of size two -- hence we exclude the situation $p=XaaY$, where the subwords $X$ and $Y$ do not contain the letter $a$.
Indeed, in this case, we would have $\wdepth(XY)\leq\wdepth(p)$, and thus $q:=XY\in\langle\pi_l,l\leq k\rangle$ by the induction hypothesis. Composing $q$ with a suitable partition $\idpart^{\otimes\alpha}\otimes\paarpart\otimes\idpart^{\otimes\beta}$ yields $p\in\langle\pi_l,l\leq k\rangle$.

Let us now begin with the main part of the proof. We are given a partition $p\in\cC$ of the following form (after rotation and possible reflection, see Lemma \ref{LemVertRefl}):
\[p=aaXaYaZ\]
We may assume that $X$ and $Y$ do not contain the letter $a$, and that $X$ contains at least one letter different from $a$, since $p$ is in single-double leg form. Furthermore, we may assume that  if $bb$ is a pair of consecutive letters in $X=X_1bbX_2$, then the letter $b$ appears at least once in $Z$. Otherwise, we study the block on the letter $b$ instead of the one on the letter $a$. This is a minimality assumption on the length of $X$. Note that $b$ cannot appear in $Y$, since this would contradict the Doubling Lemma \ref{LemABA2} -- the letter $a$ appears in $XaY$ exactly once.

By the induction hypothesis, we know that the partition $XaYaZ$ is in $\langle\pi_l,l\leq k\rangle$, and hence the following partition, too (see the above exposition of the idea of the proof or Lemma \ref{LemReduzieren1}):
\[q:=XaaaYaZ\] 
We have to prove that also $p$ is in $\langle\pi_l,l\leq k\rangle$. To see this, we form subwords $X_j$ of $X$, such that $X$ is of the form:
\[X=X_1 X_2 \ldots X_l\]
The subwords $X_j$ are obtained by the following procedure: We read $X$ from right to left, writing down letter by letter, until one of the letter appears for the second time. It can only be the last written down, by the Doubling Lemma \ref{LemABA2}. For instance for the first subword from the right, $X_l$, we obtain:
\[b_mb_m \ldots  b_2b_1\]
We continue writing down the letters (which now come in reversed order) as long as they appear for the second time in our subword. As soon as a letter different from all the others appears, or a letter appears for the third time, we end our subword and this will be the starting point of our next subword.
In the end, the subword $X_l$ will be of the form:
 \[X_l=b_s \ldots b_mb_m \ldots b_s b_{s-1} \ldots b_2b_1\]
Here, $1\leq s\leq m$.
 The subword $X_{l-1}$ then either starts with $b_s$ or with a letter different from all $b_1,\ldots,b_m$.
 In general, the subwords are of the form:
\[X_j=b^{(j)}_{s_j} b^{(j)}_{s_j+1} \ldots b^{(j)}_{m_j}b^{(j)}_{m_j} \ldots b^{(j)}_{t_j+1}b^{(j)}_{t_j}\]
Here, $1\leq s_j, t_j\leq m_j$ with $s_j\leq t_j$ or $s_j\geq t_j$, and all letters $b^{(j)}_r$ are mutually different.
Viewing $X$ as a Dyck path, each subword $X_j$ corresponds exactly to one ``tooth'' of the path, as motivated above. Hence, we could also view the construction of the subwords $X_j$ as reading in an ``even/up'' and in an ``odd/down'' mode, depending on whether we go $(1,-1)$ or $(1,1)$ in the Dyck path. We always start with a new subword whenever we switch from an even reading mode into an odd reading mode.
 
Considering the subword $X_l$ we now prove that $m\leq k-1$.

\textbf{\emph{Step 2.} We have $m\leq k-1$ for the subword $X_l=b_s \ldots b_mb_m \ldots b_1$.}

In the sequel, we will construct a W of depth $m+1$ in $p$. 
We are in the situation that $p$ is of the form:
\[p=aaX_1\ldots X_{l-1} b_s \ldots b_mb_m \ldots b_s \ldots b_2b_1aYaZ\]
By the minimality assumption on $p$ regarding the pair $aa$, we know that $b_m$ appears in the subword $Z$. Recall that it cannot appear in $Y$ by the Doubling Lemma \ref{LemABA2}. Hence $Z$ is of the form $Z=Z'b_mZ_m$. We may assume that $b_m$ does not appear in $Z'$. The letter $b_{m-1}$ appears in the subword
\[b_mb_{m-1}\ldots b_2b_1aYaZ'b_m\]
an even number of times, likewise in $aYa$. Thus, it appears in $Z'$ at least once -- or to be more precise: an odd number of times. Inductively we obtain that $p$ is of the form:
\[p=aaX_1\ldots X_{l-1} b_s \ldots b_mb_m \ldots b_2b_1aYaZ_0b_1Z_1b_2Z_2\ldots b_{m-1} Z_{m-1} b_m Z_m\]
By construction, we can obtain that $b_j$ does not appear in $Z_k$ for $k<j$.
Hence, every letter $b_j$ appears an odd number of times in the subword:
\[b_1Z_1b_2Z_2\ldots b_{m-1} Z_{m-1} b_{m}\]
Using rotation, we can consider the subword:
\[aZ_0b_1Z_1b_2Z_2\ldots b_{m-1} Z_{m-1} b_m Z_ma\]
By the Doubling Lemma \ref{LemABA2}, we know that $b_1$ appears in the above subword at least a second time. In each of the subwords $Z_k$ for $k<m$, the letter $b_1$ appears an even number of times (or not at all). This simply follows from the Doubling Lemma \ref{LemABA2} when considering the subwords
\[b_k\ldots b_1 a Y a Z_0b_1Z_1b_2Z_2\ldots b_{k-1}Z_{k-1}b_k\]
and
\[b_{k+1}b_k\ldots b_1 a Y a Z_0b_1Z_1b_2Z_2\ldots b_{k-1}Z_{k-1}b_kZ_kb_{k+1}\]
We infer that $b_1$ appears in the subword $Z_m$ an odd number of times, hence at least once.
Therefore, $Z_m=Z_m'b_1\tilde Z_0$. Inductively, we obtain:
\[Z_m=\tilde Z_m b_m \tilde Z_{m-1} b_{m-1}\ldots \tilde Z_2 b_2 \tilde Z_1 b_1 \tilde Z_0\]
Since the letters $b_1,\ldots b_{s-1}$ appear in $X_1\ldots X_{l-1}$, we infer that a rotated version of $p$ contains a W of depth $m+1$ consisting in the letters:
\[a, b_1, b_2,\ldots, b_m\]
Thus, $m+1\leq \wdepth(p)\leq k$. 

\textbf{\emph{Step 3.} Commuting $aa$ with the subword $X_l$.}

Let us go back to the following picture of $q\in\langle\pi_l,l\leq k\rangle$ -- the parentheses are just for readability:
\[q=X_1\ldots X_{l-1} \left(b_s \ldots b_mb_m \ldots b_s\right) b_{s-1}\ldots b_1aaaYaZ\]
Composing $q$ with suitable  $\idpart^{\otimes\alpha}\otimes\paarpart\otimes\idpart^{\otimes\beta}$ and using Lemma \ref{LemBlockVerbinden}, we may insert the pair $b_{s-1}b_{s-1}$ and deduce that the following partition is in $\langle\pi_l,l\leq k\rangle$, too:
\[X_1\ldots X_{l-1} \left(b_s \ldots b_mb_m \ldots b_s\right) \left(b_{s-1}b_{s-1}\right)b_{s-1}\ldots b_1aaaYaZ\]
Since $t:=m-s+1\leq m\leq k-1$, we use a rotated version of $\pi_{t+1}\in\langle\pi_l,l\leq k\rangle$ and compose it with the above partition in order to obtain the following partition in $\langle\pi_l,l\leq k\rangle$ (see the above exposition of the idea of the proof):
\[X_1\ldots X_{l-1} \left(b_{s-1}b_{s-1}\right)\left( b_s \ldots b_mb_m \ldots b_s\right) b_{s-1}\ldots b_1aaaYaZ\]
Iterating this procedure, we obtain the following partition in $\langle\pi_l,l\leq k\rangle$:
\[X_1\ldots X_{l-1} b_{s-1}\ldots b_1 b_1 \ldots b_{s-1} b_s \ldots b_mb_m \ldots b_s b_{s-1}\ldots b_1aaaYaZ\]
As $m\leq k-1$, we infer that we can now let $aa$ and $b_1\ldots b_m b_m \ldots b_1$ commute such that the following partition is in $\langle\pi_l,l\leq k\rangle$:
\[X_1\ldots X_{l-1} b_{s-1}\ldots b_1 aa b_1 \ldots b_{s-1} b_s \ldots b_mb_m \ldots b_s b_{s-1}\ldots b_1aYaZ\]
We can also write it like:
\[X_1\ldots X_{l-1} b_{s-1}\ldots b_1 aa b_1 \ldots b_{s-1} X_laYaZ\]

\textbf{\emph{Step 4.} Commuting $aa$ with the subword $X$.}

The scheme of Step 2 + 3 is the following. Starting with
\[X_1\ldots X_{l-2}X_{l-1} X_l aaa Y a Z\in\langle\pi_l,l\leq k\rangle\;,\]
we infer that
\[X_1\ldots X_{l-2}X_{l-1} b_{s-1}\ldots b_1 aab_1\ldots b_{s-1} X_la Y a Z\in\langle\pi_l,l\leq k\rangle\;.\]
Now, renaming $X_{l-1}':=X_{l-1}b_{s-1}\ldots b_1$ and applying Step 2 and 3 again yields
\[X_1\ldots X_{l-2} b'_{s'-1}\ldots b'_1 aab'_1\ldots b'_{s'-1} X_{l-1}'b_1\ldots b_{s-1}X_la Y aZ \in\langle\pi_l,l\leq k\rangle\;,\]
which amounts to:
\[X_1\ldots X_{l-2} b'_{s'-1}\ldots b'_1 aab'_1\ldots b'_{s'-1} X_{l-1}b_{s-1}\ldots b_1b_1\ldots b_{s-1}X_la Y aZ \in\langle\pi_l,l\leq k\rangle\]
Using the pair partition, we infer: 
\[X_1\ldots X_{l-2} b'_{s'-1}\ldots b'_1 aab'_1\ldots b'_{s'-1} X_{l-1}X_la Y a Z\in\langle\pi_l,l\leq k\rangle\]
An induction on steps 2 and 3 completes the proof that 
\[p= aa X_1\ldots X_{l-1} X_laYaZ\]
is in $\langle\pi_l,l\leq k\rangle$.
\end{proof}

Using Lemma \ref{LemExistenzVonPi} and Lemma \ref{LemWdepthInPi}, we can now classify all non group-theoretical hyperoctahedral categories.

\begin{theorem}\label{ThMain}
 Let $\cC$ be a non group-theoretical hyperoctahedral category. Then $\CC=\langle\pi_l,l\leq k\rangle$ for some $k\in\{1,2,\ldots,\infty\}$.
\end{theorem}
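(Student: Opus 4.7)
The plan is to set $k := \sup\{\wdepth(p) \mid p \in \CC\} \in \{1,2,\ldots,\infty\}$ and to prove that $\CC = \langle \pi_l, l \leq k \rangle$. The supremum is well defined and at least $1$, because $\vierpart = \pi_1 \in \CC$ (since $\CC$ is hyperoctahedral) and $\wdepth(\vierpart) = 1$ by Remark \ref{RemPropertieswdepth}(a). With $k$ defined this way, both inclusions follow cleanly from the two main lemmas of Section \ref{SectWeakSimpl}.

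For the forward inclusion $\langle \pi_l, l \leq k \rangle \subseteq \CC$, I would fix any $l \in \N$ with $l \leq k$ and produce $p \in \CC$ with $\wdepth(p) \geq l$: if $k < \infty$ one takes a partition realising the supremum, and if $k = \infty$ one picks $p \in \CC$ with $\wdepth(p) \geq l$, which exists by definition of the supremum. Lemma \ref{LemExistenzVonPi} then gives $\pi_{\wdepth(p)} \in \CC$, and since $\pi_l \in \langle \pi_m \rangle$ whenever $l \leq m$ (obtained by applying the pair partition to $\pi_m$, as observed at the beginning of Section \ref{SectStructure}), we conclude $\pi_l \in \CC$. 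In the case $k = \infty$ this means $\pi_l \in \CC$ for all $l \in \N$, matching the convention $\langle \pi_l, l \leq \infty \rangle := \langle \pi_l, l \in \N \rangle$.

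For the reverse inclusion $\CC \subseteq \langle \pi_l, l \leq k \rangle$, take $p \in \CC$ and pass (after rotation) to its associated partition $p'$ in single-double leg form. Both $\CC$ and $\langle \pi_l, l \leq k \rangle$ contain $\vierpart = \pi_1$, so Proposition \ref{LemReduzieren2} applies in each of them and yields the equivalences $p \in \CC \Leftrightarrow p' \in \CC$ and $p \in \langle \pi_l, l \leq k \rangle \Leftrightarrow p' \in \langle \pi_l, l \leq k \rangle$. Since $p' \in \CC$, the definition of $k$ gives $\wdepth(p') \leq k$, and Lemma \ref{LemWdepthInPi} (applicable because $\CC$ is non group-theoretical hyperoctahedral and $k \geq 1$) yields $p' \in \langle \pi_l, l \leq k \rangle$, hence $p \in \langle \pi_l, l \leq k \rangle$ as required.

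The hard combinatorial work — turning the loose geometric intuition that ``a partition of $\wdepth$ exactly $k$ is essentially a distorted $\pi_k$'' into a rigorous statement in both directions — has already been carried out in Lemmas \ref{LemExistenzVonPi} and \ref{LemWdepthInPi}. Consequently, at this stage the proof is just the bookkeeping above; the only point that needs a moment of care is that the single-double leg reduction $p \mapsto p'$ of Proposition \ref{LemReduzieren2} works verbatim inside $\langle \pi_l, l \leq k \rangle$, which is immediate from $\pi_1 = \vierpart \in \langle \pi_l, l \leq k \rangle$.
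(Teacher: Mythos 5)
Your proof is correct and follows essentially the same route as the paper: define $k$ as the supremum of $\wdepth$ over $\CC$, use Lemma \ref{LemWdepthInPi} together with the single-double leg reduction of Proposition \ref{LemReduzieren2} for the inclusion $\CC\subset\langle\pi_l,l\leq k\rangle$, and Lemma \ref{LemExistenzVonPi} for the reverse inclusion. Your handling of the case $k=\infty$ (picking, for each $l$, some $p\in\CC$ with $\wdepth(p)\geq l$) is a slightly cleaner uniform formulation of what the paper does by splitting into the attained and unattained cases.
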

\begin{proof}
Let $k$ be the supremum of all numbers $\wdepth(p)$, where $p\in\cC$. Then $k\geq 1$, since $\wdepth(\vierpart)=1$. Let $p\in\cC$ and let $p'\in P(0,2n)$ be a rotated version of it. Let $p''$ be its associated partition in single-double leg form. By Lemma \ref{LemReduzieren2}, we have $p''\in\CC$ and hence $\wdepth(p'')\leq k$. By Lemma \ref{LemWdepthInPi}, we have $p''\in\langle\pi_l,l\leq k\rangle$. This implies $p\in\langle\pi_l,l\leq k\rangle$. We infer $\cC\subset \langle \pi_l, l\leq k\rangle$.

On the other hand, if $k<\infty$, then there is a partition $p\in\cC$ such that $\wdepth(p)=k$. By Lemma \ref{LemExistenzVonPi}, we infer that $\pi_k\in\cC$, which proves $\cC= \langle\pi_l,l\leq k\rangle$. If $k=\infty$, we find partitions $p_n\in\cC$ with $\wdepth(p_n)=l_n$ for $l_n\to\infty$, if $n\to\infty$. This yields  $\cC=\langle \pi_l,l\in\N\rangle$ in the case $k=\infty$.
\end{proof}

As a corollary, we obtain the main theorem of our article.

\begin{theorem}\label{ThMain2}
If $G$ is an orthogonal easy quantum group, its corresponding category of partitions
\begin{itemize}
\item[(i)] either is non-hyperoctahedral (and hence it is one of the 13 cases of \cite{Weber13}),
\item[(ii)] or it coincides with $\langle\pi_l,l\leq k\rangle$ for some $k\in\{1,2,\ldots,\infty\}$,
\item[(iii)] or it is group-theoretical and hyperoctahedral (see \cite{RaumWeberSimpl}, \cite{RaumWeberSemiDirect}).
\end{itemize}
\end{theorem}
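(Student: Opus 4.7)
The plan is to argue that the trichotomy is a direct sorting of cases enabled by the results already assembled in the paper, together with Theorem~\ref{ThMain} proved in Section~\ref{SectWeakSimpl}. First I would invoke Woronowicz's Tannaka--Krein duality, recalled in Section~\ref{SectCMQG}, to reduce the statement to a classification of categories of partitions: each orthogonal easy quantum group $G$ with $S_n \subset G \subset O_n^+$ corresponds bijectively to a category $\CC$, so it suffices to show that every such $\CC$ falls into (i), (ii), or (iii).

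Next I would split on whether $\CC$ is hyperoctahedral. If $\CC$ is non-hyperoctahedral, i.e.\ either $\vierpart \notin \CC$ or $\singleton\otimes\singleton \in \CC$, I would appeal directly to the classification recalled in Section~\ref{SectHistory} (originating in \cite{BanicaSpeicher09}, \cite{BanicaCurranSpeicher10} and \cite{Weber13}), which enumerates exactly 13 such categories. This puts us in case (i).

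Suppose now that $\CC$ is hyperoctahedral. Here I would split a second time, according to whether $\primarypart \in \CC$. If $\primarypart \in \CC$, then by definition $\CC$ is group-theoretical, and we are in case (iii), with the structural analysis of such categories carried out in \cite{RaumWeberSimpl, RaumWeberSemiDirect}. If $\primarypart \notin \CC$, then $\CC$ is a non group-theoretical hyperoctahedral category, so Theorem~\ref{ThMain} applies verbatim and yields $\CC = \langle \pi_l, l \leq k \rangle$ for some $k \in \{1, 2, \ldots, \infty\}$, which is case (ii).

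The only subtlety is to confirm that the cases cover every possibility exhaustively, but this is immediate from the two dichotomies used (hyperoctahedral vs.\ non-hyperoctahedral, and within the hyperoctahedral world, group-theoretical vs.\ not). There is essentially no obstacle at this stage: the difficult combinatorial work lies entirely in Theorem~\ref{ThMain} and its preparatory lemmas in Sections~\ref{SectStructure} and \ref{SectWeakSimpl}; the present statement is a packaging result. (If one additionally wanted the three cases to be pairwise disjoint as stated, one would observe that cases (ii) and (iii) are disjoint because $\primarypart \notin \langle \pi_l, l \leq k \rangle$ for any $k \in \{1,\ldots,\infty\}$, a fact promised by the strictness of inclusions established in Section~\ref{SectCStar} via the associated C$^*$-algebras.)
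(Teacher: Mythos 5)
Your proposal is correct and follows exactly the route the paper intends: Theorem~\ref{ThMain2} is stated there as an immediate corollary of Theorem~\ref{ThMain}, obtained by the same two dichotomies (hyperoctahedral or not, and then $\primarypart\in\CC$ or not). Your added remark on disjointness via Section~\ref{SectCStar} is a sensible bonus but not required for the statement as written.
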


Coming back to the description of the elements in $\langle \pi_l, l\in\N\rangle$, we now understand how to obtain them starting with noncrossing pair partitions. Take a noncrossing pair partition $p\in NC_2$ and consider its Dyck path. We now may connect pairs (such that they belong to the same block) according to the following rule:

\begin{itemize}
 \item[(1)] We may only connect pairs which are on the same level of the Dyck path.
 \item[(2)] If we connect two pairs, we must also connect all ``open pairs'' in between.
\end{itemize}

To illustrate this rule, let us consider the following partition $q_0\in NC_2$.
\[q_0=abccddbaeffghhgeijji\]

Its Dyck path is of the form:

\setlength{\unitlength}{0.5cm}
\begin{center}
  \begin{picture}(20,5)
   \put(0.2,1.2){\usebox{\Dyckpath}}
   \put(0,1){\usebox{\Dyckpathpoints}}
   \put(0.5,0){$\boldsymbol a$}
   \put(1.5,0){$\boldsymbol b$}
   \put(2.5,0){$\boldsymbol c$}
   \put(3.5,0){$\boldsymbol c$}
   \put(4.5,0){$d$}
   \put(5.5,0){$d$}
   \put(6.5,0){$\boldsymbol b$}
   \put(7.5,0){$\boldsymbol a$}
   \put(8.5,0){$\underline e$}
   \put(9.5,0){$f$}
   \put(10.5,0){$f$}
   \put(11.5,0){$\underline g$}
   \put(12.5,0){$\underline h$}
   \put(13.5,0){$\underline h$}
   \put(14.5,0){$\underline g$}
   \put(15.5,0){$\underline e$}
   \put(16.5,0){$i$}
   \put(17.5,0){$j$}
   \put(18.5,0){$j$}
   \put(19.5,0){$i$}
  \end{picture}
\end{center}

We may now connect the pair $cc$ to the pair $hh$ since they are on the same level -- for instance, $cc$ with $gg$ would be impossible, as well as $cc$ with $jj$ (but $gg$ and $jj$ would be okay). But according to item (2), we also have to connect $bb$ with $gg$ as well as $aa$ with $ee$. We end up with the partition $q\in\langle\pi_l,l\in\N\rangle$ of the form:
\[q=abccddbaaffbccbaijji\]

Partitions in $\langle\pi_l,l\leq k\rangle$ may be obtained in the same way, but here the level in item (1) and the number of ``open pairs'' in item (2) are restricted.

Another way of viewing partitions in $\langle\pi_l,l\leq k\rangle$ is via rooted trees. ``Glueing'' together the pairs in the above Dyck path yields the following rooted graph:

\setlength{\unitlength}{0.5cm}
\begin{center}
  \begin{picture}(8,4)
   \put(0,0){$\bullet$}
   \put(1,1){$\bullet$}
   \put(1,2){$\bullet$}
   \put(2,0){$\bullet$}
   \put(4,1){$\bullet$}
   \put(5,2){$\bullet$}
   \put(5,3){$\circ$}
   \put(6,0){$\bullet$}
   \put(6,1){$\bullet$}
   \put(8,1){$\bullet$}
   \put(8,2){$\bullet$}
   \put(0.2,0.2){\line(1,1){1}}
   \put(1.2,1.2){\line(1,-1){1}}
   \put(1.2,1.2){\line(0,1){1}}
   \put(1.2,2.2){\line(4,1){4}}
   \put(4.2,1.2){\line(1,1){1}}
   \put(5.2,2.2){\line(0,1){1}}
   \put(5.2,2.2){\line(1,-1){1}}
   \put(6.2,0.2){\line(0,1){1}}
   \put(5.2,3.2){\line(3,-1){3}}
   \put(8.2,1.2){\line(0,1){1}}
  \end{picture}
\end{center}
We read such a graph in the following way.
Starting from the root ($\circ$), we take the leftmost branch and we begin to ``open pairs'' until we reach the end of the most left branch. In our example: $q_0=abc$. We then return and go back to the first knot where a new branch begins: $q_0=abcc$. We walk along this new branch until its end ($q_0=abccd$) and return, if we reach its end, again going back to the first knot where a new branch begins ($q_0=abccddba$). Here, ``going back closes the pairs''. Step by step, we read the full word $q_0$ as above. In the tree associated to $q$ in turn, some edges are marked with the same color (in the picture the ``colors'' are $\alpha, \beta$ and $\gamma$) corresponding to related pairs.

\setlength{\unitlength}{0.5cm}
\begin{center}
  \begin{picture}(8,4)
   \put(0,0){$\bullet$}
   \put(1,1){$\bullet$}
   \put(1,2){$\bullet$}
   \put(2,0){$\bullet$}
   \put(4,1){$\bullet$}
   \put(5,2){$\bullet$}
   \put(5,3){$\circ$}
   \put(6,0){$\bullet$}
   \put(6,1){$\bullet$}
   \put(8,1){$\bullet$}
   \put(8,2){$\bullet$}
   \put(0.2,0.2){\line(1,1){1}}
   \put(1.2,1.2){\line(1,-1){1}}
   \put(1.2,1.2){\line(0,1){1}}
   \put(1.2,2.2){\line(4,1){4}}
   \put(4.2,1.2){\line(1,1){1}}
   \put(5.2,2.2){\line(0,1){1}}
   \put(5.2,2.2){\line(1,-1){1}}
   \put(6.2,0.2){\line(0,1){1}}
   \put(5.2,3.2){\line(3,-1){3}}
   \put(8.2,1.2){\line(0,1){1}}
   \put(0.2,0.9){$_\alpha$}
   \put(6.5,0.8){$_\alpha$}
   \put(0.5,1.7){$_\beta$}
   \put(5.8,1.9){$_\beta$}
   \put(2,2.9){$_\gamma$}
   \put(5.4,2.7){$_\gamma$}
  \end{picture}
\end{center}

The coloring rule of such a rooted tree is obviously that if two edges have the same color, the paths that connects them with the root have to be colored in the same way. Furthermore, no two edges of a path from a vertex to the root are allowed to have the same color. Now, it is easy to see when a colored, rooted tree contains a $W$ of depth $k$: It is the maximal length of a  coloring of a path which appears at least twice. The only difficulty when working with the identification of colored, rooted trees and partitions in $\langle\pi_l,l\leq k\rangle$ is the correct change of colors when considering rotations of the partitions (note that this is needed to determine the value $\wdepth(p)$).

Let us remark a second thing. We say that a category $\CC$ is \emph{finitely generated}, if there are finitely many partitions $p_1,\ldots,p_n$ such that $\CC=\langle p_1,\ldots,p_n\rangle$, and $\CC$ is \emph{singly generated}, if $\CC=\langle p\rangle$ for some partition $p$. We have a dichotomy between singly generated and not finitely generated categories.

\begin{lemma}
 Let $\CC$ be a category of partitions. Then, $\CC$ is finitely generated if and only if it is singly generated. Hence $\CC$ is either singly generated or not finitely generated.
\end{lemma}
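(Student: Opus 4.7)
The forward direction is immediate, since a singly generated category is finitely generated (by one element). For the converse, suppose $\CC = \langle p_1, \dotsc, p_n \rangle$; the task is to exhibit a single partition $p$ with $\langle p \rangle = \CC$. After rotating each generator I may assume $p_i \in P(0, l_i)$, and I write $L = l_1 + \cdots + l_n$.

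My candidate generator is essentially the tensor product $p = p_1 \otimes \cdots \otimes p_n$, with a small parity adjustment described below. Since $p \in \CC$, one direction $\langle p \rangle \subseteq \CC$ is free. The reverse inclusion I obtain by extracting each $p_i$ from $p$ via a composition $T_i p$, where $T_i \in P(L, l_i)$ is built as a tensor product of \emph{erasers} in $P(l_j, 0)$ on every segment of $p$ of length $l_j$ for $j \neq i$, and of the identity $\idpart^{\otimes l_i}$ on the segment of $p_i$. Because an eraser has no lower points, each block of a $p_j$ with $j \neq i$ merges after composition into a block that lies entirely among the internal (middle) points and hence disappears, while the identity segment faithfully transports the block structure of $p_i$ onto the $l_i$ external lower points of $T_i p$. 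Thus $T_i p = p_i$ as partitions.

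An eraser on an even number $2k$ of points can be taken to be $\baarpart^{\otimes k}$, which lies in every category. The hard point is erasing an odd-length segment, for which $T_i$ must contain a factor $\singleton^*$; so I need $\singleton \in \langle p \rangle$ whenever some $l_j$ with $j \neq i$ is odd. This is supplied by the following parity lemma: if a category $\CC'$ contains some partition of odd total length $m$, then $\singleton \in \CC'$. Indeed, $\CC'$ contains the partition $\idpart \otimes \baarpart^{\otimes (m-1)/2} \in P(m, 1)$, and composing it below the given partition yields an element of $P(0,1)$, which is forced to be $\singleton$.

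The argument now splits into two cases. If every $l_j$ is even, take $p = p_1 \otimes \cdots \otimes p_n$; all needed erasers are built from $\baarpart$ alone, so $T_i \in \langle p \rangle$ and the extraction goes through. If some $l_j$ is odd, then $\singleton \in \CC$ by the parity lemma applied to that $p_j$, and I take $p = p_1 \otimes \cdots \otimes p_n$ when $L$ is odd, and $p = \singleton \otimes p_1 \otimes \cdots \otimes p_n$ (legal since $\singleton \in \CC$) when $L$ is even. In either variant $p$ has odd total length, so the parity lemma applied \emph{inside} $\langle p \rangle$ gives $\singleton \in \langle p \rangle$ and, by involution, $\singleton^* \in \langle p \rangle$. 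The mixed erasers $\baarpart^{\otimes (l_j-1)/2} \otimes \singleton^*$ then live in $\langle p \rangle$, each $T_i$ can be assembled, and every $p_i$ is extracted as above (with the leading $\singleton$ factor of $p$, in the second variant, erased by a single $\singleton^*$). The main obstacle is exactly this parity mismatch: without a way to remove an odd number of points, the tensor-product construction breaks, and the parity lemma is precisely what unblocks it.
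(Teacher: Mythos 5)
Your proof is correct and follows essentially the same route as the paper: take the tensor product $p_1\otimes\cdots\otimes p_n$, recover each $p_i$ by erasing the other factors with the pair partition, and in the odd-length case adjoin $\singleton$ (which lies in $\CC$ by the same parity argument the paper uses) so that $\singleton$ can be regenerated from the single generator. You merely spell out the eraser construction that the paper compresses into ``using the pair partition.''
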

\begin{proof}
 Let $\CC=\langle p_1,\ldots, p_n\rangle$ and put $p:=p_1\otimes \ldots \otimes p_n$. Then $p\in\CC$. If all partitions $p_i$ are of even length, then $p_i\in\langle p \rangle$ using the pair partition. Hence $\CC=\langle p \rangle$. Otherwise, if one of the partitions $p_i$ is of odd length, then $\singleton\in\CC$. Putting $q:=p$ if $p$ is of odd length and $q:=p\otimes \singleton$ otherwise, we infer $\CC=\langle q\rangle$.
\end{proof}

In the next section, we will prove that $\langle\pi_l,l\leq k\rangle \neq\langle \pi_l, l\in\N\rangle$ for $k<\infty$. Using this, we see that $\langle \pi_l,l\in\N\rangle$ is not  finitely generated. In fact, this is the first concrete example of a category which is not singly generated, although there must be uncountably many, due to \cite{RaumWeberSemiDirect}. 

\begin{proposition}
 The category $\langle\pi_l,l\in\N\rangle$ is not finitely generated.
\end{proposition}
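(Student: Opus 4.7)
The plan is to argue by contradiction, combining the previous dichotomy lemma with the strict chain of categories $\langle\pi_l,l\leq k\rangle\subsetneq\langle\pi_l,l\in\N\rangle$ announced for the next section.

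First I would invoke the dichotomy: if $\langle\pi_l,l\in\N\rangle$ were finitely generated, then by the preceding lemma there would exist a single partition $p$ with $\langle\pi_l,l\in\N\rangle=\langle p\rangle$. The aim is then to show that every single partition $p$ inside $\langle\pi_l,l\in\N\rangle$ already lies in $\langle\pi_l,l\leq k\rangle$ for some finite $k$, which will contradict the strict inequality.

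For this, I would apply our structure theory to the fixed partition $p$. After rotation we may assume $p\in P(0,2n)$, and by Remark~\ref{RemPropertieswdepth}(c) we have $\wdepth(p)\leq \#\{\text{blocks of }p\}<\infty$. Set $k:=\wdepth(p)$. Now let $p'$ denote the partition in single-double leg form associated to $p$. Since $\langle\pi_l,l\in\N\rangle$ is non group-theoretical and hyperoctahedral, Proposition~\ref{LemReduzieren2} gives $p'\in\langle\pi_l,l\in\N\rangle$ with $\wdepth(p')\leq\wdepth(p)=k$ (passing to single-double leg form only removes even multiplicities and cannot increase the depth), and Lemma~\ref{LemWdepthInPi} then yields $p'\in\langle\pi_l,l\leq k\rangle$. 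Applying Lemma~\ref{LemReduzieren1} backwards we obtain $p\in\langle\pi_l,l\leq k\rangle$, and consequently
\[\langle\pi_l,l\in\N\rangle=\langle p\rangle\subseteq\langle\pi_l,l\leq k\rangle.\]

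Finally, invoking the fact (to be established in the next section) that $\langle\pi_l,l\leq k\rangle\neq\langle\pi_l,l\in\N\rangle$ for every $k<\infty$, we reach a contradiction, completing the proof. The only potentially delicate point is the observation that passing from $p$ to its single-double leg form $p'$ does not increase $\wdepth$; this is immediate, however, since erasing or inserting pairs of consecutive equal letters preserves the underlying alternation structure of the W-subwords appearing in Definition~\ref{Defwdepth}, so every W of depth $m$ in $p'$ lifts to one in $p$ of at least the same depth after the inverse transformation.
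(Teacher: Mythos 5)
Your argument is correct and follows essentially the same route as the paper's proof: reduce to a single generator $p$ via the dichotomy lemma, set $k:=\wdepth(p)<\infty$, apply Lemma \ref{LemWdepthInPi} to get $\langle\pi_l,l\in\N\rangle=\langle p\rangle\subseteq\langle\pi_l,l\leq k\rangle$, and contradict Proposition \ref{PiKDifferent1}. The only differences are cosmetic: the paper assumes from the start that $p$ is in single-double leg form, whereas you perform that reduction explicitly (correctly observing that it cannot increase $\wdepth$), and the paper additionally cites Lemma \ref{LemExistenzVonPi} to produce $\pi_k\in\langle p\rangle$, a step your version shows is not needed for the contradiction.
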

\begin{proof}
 Assume that $\langle\pi_l,l\in\N\rangle=\langle p \rangle$ for some partition $p\in P(0,2n)$ in single-double leg form. Let $k:=\wdepth(p)$. By Lemma \ref{LemWdepthInPi}, we know that $p\in\langle\pi_l,l\leq k\rangle$. Conversely, Lemma \ref{LemExistenzVonPi} shows that $\pi_k\in\langle p\rangle$. Thus $\langle\pi_l,l\in\N\rangle=\langle\pi_l,l\leq k\rangle$ which is a contradiction to Proposition \ref{PiKDifferent1}.
\end{proof}

\section{The associated C$^*$-algebras} \label{SectCStar}

Note that the results in this section are proved independently from Sections \ref{SectStructure} and \ref{SectWeakSimpl}.

The categories $\langle\pi_l,l\leq k\rangle$ for $k\in\N\cup\{\infty\}$ give rise to the \emph{non group-theoretical hyperoctahedral series} of easy quantum groups, denoted by $H_n^{[\pi_k]}$. All these quantum groups are quantum subgroups of the free hyperoctahedral quantum group  $H_n^{[\pi_1]}=H_n^+$.
The quantum group $H_n^{[\pi_2]}$ associated to the category $\langle\pi_2\rangle=\langle\fatcrosspart\rangle$ is also denoted by  $\Hnfc$. We call it the \emph{square commuting (hyperoctahedral) quantum group}. 

We first give a description of the C$^*$-algebras associated to the categories $\langle\pi_l,l\leq k\rangle$ (resp. to the quantum groups $H_n^{[\pi_k]}$) before we prove that they are actually all mutually different.
Let us recall how to translate the fact that a linear map $T$ is in the intertwiner space of a quantum group into relations on the generators $u_{ij}$. If $A$ is a $C^*$-algebra generated by elements $u_{ij}$, $1\leq i,j\leq n$, the map $T:(\C^n)^{\otimes k}\to(\C^n)^{\otimes l}$ gives rise to a matrix $T\otimes 1\in M_{n^k\times n^l}(A)$. On the other hand, $u^{\otimes k}$ is shorthand for the matrix $(u_{i_1j_1}\ldots u_{i_kj_k})$ in $M_{n^k\times n^k}(A)$ which may be seen as mapping  vectors from $(\C^n)^{\otimes k}\otimes A$ to $(\C^n)^{\otimes l}\otimes A$.
 We study the matrices $(T\otimes 1)$ and $ u^{\otimes k}$ by evaluation at the $i$-th entry, where $i=(i_1,\ldots,i_k)$ is a multi-index. Thus:
\begin{align*}
 (T\otimes 1) (e_{i_1}\otimes\ldots\otimes e_{i_k}) &= T(e_{i_1}\otimes\ldots\otimes e_{i_k})\otimes 1\\
 u^{\otimes k}(e_{i_1}\otimes\ldots\otimes e_{i_k}) &= \sum_{\alpha_1,\ldots,\alpha_k} (e_{\alpha_1}\otimes\ldots\otimes e_{\alpha_k})\otimes u_{\alpha_1 i_1}\ldots u_{\alpha_k i_k}
\end{align*}
These methods will be used in the sequel.

\begin{proposition}\label{PropCStarHnfc}
 The C$^*$-algebra associated to the quantum group $\Hnfc$ is the universal unital C$^*$-algebra generated by elements $u_{ij}$ such that the following relations hold:
 \begin{itemize}
  \item[(i)] $u_{ij}^*=u_{ij}$ for all $i,j$
  \item[(ii)] $u_{ij}^2u_{kl}^2=u_{kl}^2u_{ij}^2$ for all $i,j,k,l$
  \item[(iii)] $u_{ik}u_{jk}=u_{ki}u_{kj}=0$ for all $i,j,k$ with $i\neq j$
  \item[(iv)] $\sum_k u_{ik}^2=\sum_k u_{ki}^2=1$ for all $i$
 \end{itemize}
In particular, the elements $u_{ij}$ are partial isometries.
\end{proposition}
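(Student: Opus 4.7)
The plan is to apply Woronowicz's Tannaka--Krein duality \cite{Woronowicz88}: since $\Hnfc$ is the easy quantum group associated to $\CC=\langle\fatcrosspart\rangle$, its $C^{*}$-algebra is the universal unital $C^{*}$-algebra generated by self-adjoint elements $u_{ij}$ (the self-adjointness (i) coming from $\Hnfc\subset O_{n}^{+}$) such that $T_{p}$ intertwines the tensor powers of $u$ for every $p$ in a generating set of $\CC$. By Lemma~\ref{LemBezBasePart}(i) we have $\vierpart\in\langle\fatcrosspart\rangle$, so $\{\paarpart,\vierpart,\fatcrosspart\}$ is a convenient generating set, and the task reduces to translating the three corresponding intertwiner conditions into algebraic relations and matching the outcome to (i)--(iv).

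The conditions for $\paarpart$ and $\vierpart$ produce precisely the defining relations of the free hyperoctahedral quantum group $H_{n}^{+}$ of Banica--Bichon--Collins, which is the easy quantum group for $\langle\vierpart\rangle$. Namely, $T_{\paarpart}$ being an intertwiner amounts to orthogonality of $u$: for $i=j$ this is exactly (iv), and for $i\neq j$ it gives $\sum_{k}u_{ik}u_{jk}=0$ (later subsumed by (iii)). The $\vierpart$-intertwiner condition, written on basis vectors, reads $\sum_{k}u_{\alpha k}u_{\beta k}u_{\gamma k}u_{\delta k}=\delta_{\alpha\beta\gamma\delta}$; specialising to $\alpha=\beta\neq\gamma=\delta$ this gives $\sum_{k}u_{\alpha k}^{2}u_{\gamma k}^{2}=0$, and the pointwise relation (iii) is extracted by the standard positivity argument: combined with a suitable rotation of $\vierpart$ and using self-adjointness, one shows that the positive elements $(u_{\alpha k}u_{\gamma k})^{*}(u_{\alpha k}u_{\gamma k})=u_{\gamma k}u_{\alpha k}^{2}u_{\gamma k}$ sum to zero and hence vanish individually. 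The column version of (iii) follows symmetrically, and the ``partial isometry'' conclusion is automatic, since (iii) and (iv) together force $u_{ij}^{2}$ to be an idempotent, hence (by self-adjointness) a projection.

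The new content of the proposition is relation (ii), coming from the $\fatcrosspart$-intertwiner condition. A direct computation yields
\[
T_{\fatcrosspart}(e_{a}\otimes e_{b}\otimes e_{c}\otimes e_{d})=\delta_{ab}\delta_{cd}\,e_{c}\otimes e_{c}\otimes e_{a}\otimes e_{a}.
\]
Expanding $u^{\otimes 4}T_{\fatcrosspart}=T_{\fatcrosspart}u^{\otimes 4}$ on basis vectors and simplifying by means of the $H_{n}^{+}$ relations (under which $u_{\alpha c}u_{\beta c}$ collapses to $\delta_{\alpha\beta}u_{\alpha c}^{2}$), both sides reduce to sums over pairs $(\alpha,\gamma)$ of tensors of the form $e_{\gamma}^{\otimes 2}\otimes e_{\alpha}^{\otimes 2}$, and after relabelling tensor factors the equality boils down to
\[
u_{\alpha c}^{2}\,u_{\gamma a}^{2}=u_{\gamma a}^{2}\,u_{\alpha c}^{2}
\]
for all indices, which is exactly (ii).

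For the converse, start from the universal $C^{*}$-algebra $A$ defined by (i)--(iv) and verify that $T_{\paarpart}$, $T_{\vierpart}$ and $T_{\fatcrosspart}$ are intertwiners of the tensor powers of $u$ -- a straightforward reversal of the computations above. Woronowicz's duality then equips $A$ with a comultiplication $\Delta(u_{ij})=\sum_{k}u_{ik}\otimes u_{kj}$, well-defined by the universal property, and identifies the resulting compact matrix quantum group with $\Hnfc$; the invertibility of $u$ and $u^{t}$ required in Definition~\ref{DefCMQG} follows from orthogonality and self-adjointness. The main technical subtlety is the positivity step needed to pass from the scalar identity $\sum_{k}u_{\alpha k}^{2}u_{\gamma k}^{2}=0$ to the pointwise relation (iii); once the $H_{n}^{+}$ relations are in place, the $\fatcrosspart$ calculation is essentially mechanical bookkeeping.
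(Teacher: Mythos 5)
Your proposal is correct and follows essentially the same route as the paper: derive (i), (iii), (iv) from $\paarpart$ and $\vierpart$ (the standard $H_n^+$ relations), obtain (ii) by expanding the intertwiner condition for $T_{\fatcrosspart}$ on basis vectors and collapsing $u_{\alpha c}u_{\beta c}$ to $\delta_{\alpha\beta}u_{\alpha c}^2$ via (iii), and then reverse the computation to see that the universal algebra defined by (i)--(iv) is a quotient of $C(\Hnfc)$ and vice versa, with the partial-isometry claim coming from $u_{ij}^2=u_{ij}^2\sum_k u_{ik}^2=u_{ij}^4$. The only cosmetic difference is that you sketch the derivation of the $H_n^+$ relations where the paper simply cites \cite{Weber13}.
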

\begin{proof}
The proof follows from standard techniques, see for instance \cite{Weber13}. Since the partitions $p=\paarpart$ and $q=\vierpart$ are in $\langle\fatcrosspart\rangle$, we have that the corresponding maps $T_p$ and $T_q$ are in the intertwiner spaces of $\Hnfc$. We deduce that the relations (i), (iii) and (iv) hold in the C$^*$-algebra asociated to $\Hnfc$ (see \cite{Weber13}). 

In order to prove that (ii) holds in $C(\Hnfc)$, note that the linear map $T_p:(\CC^n)^{\otimes 4}\to(\CC^n)^{\otimes 4}$ associated to $p=\;\fatcrosspart$ is of the form:
 \[T_p(e_a\otimes e_b\otimes e_c\otimes e_d)=\delta_{ab}\delta_{cd} e_c\otimes e_c\otimes e_a\otimes e_a\]
 We compute using (iii) (see also \cite{Weber13} for such computations):
 \begin{align*}
  u^{\otimes 4}(T_p\otimes 1)(e_a\otimes e_b\otimes e_c\otimes e_d)
  &= \delta_{ab}\delta_{cd}\sum_{\alpha\beta\gamma\delta} e_\alpha\otimes e_\beta\otimes e_\gamma\otimes e_\delta\otimes u_{\alpha c}u_{\beta c} u_{\gamma a} u_{\delta a}\\
  &= \delta_{ab}\delta_{cd}\sum_{\alpha\gamma} e_\alpha\otimes e_\alpha\otimes e_\gamma\otimes e_\gamma\otimes u_{\alpha c}^2 u_{\gamma a}^2\\
 (T_p\otimes 1)u^{\otimes 4}(e_a\otimes e_b\otimes e_c\otimes e_d)
  &= \sum_{\gamma\delta\alpha\beta} (T_p\otimes 1)e_\gamma\otimes e_\delta\otimes e_\alpha\otimes e_\beta\otimes u_{\gamma a}u_{\delta b} u_{\alpha c} u_{\beta d}\\
  &= \sum_{\gamma\alpha} e_\alpha\otimes e_\alpha\otimes e_\gamma\otimes e_\gamma\otimes u_{\gamma a}u_{\gamma b} u_{\alpha c} u_{\alpha d}\\
  &= \delta_{ab}\delta_{cd}\sum_{\gamma\alpha} e_\alpha\otimes e_\alpha\otimes e_\gamma\otimes e_\gamma\otimes  u_{\gamma a}^2u_{\alpha c}^2   
 \end{align*} 
We conclude that  $u^{\otimes 4}(T_p\otimes 1)=(T_p\otimes 1)u^{\otimes 4}$ if and only if the relations (ii) hold.
 
On the other hand, the universal C$^*$-algebra $A$ generated by the above relations (i), (ii), (iii) and (iv) gives rise to a compact matrix quantum group since all relations are preserved under the comultiplication $\Delta$. It follows that the maps $T_p$ are in the intertwiner spaces of this quantum group, for $p\in\{\paarpart,\vierpart,\fatcrosspart\}$. Thus, the intertwiner spaces contain all maps $T_p$, for $p\in\langle\fatcrosspart\rangle$ and $A$ is a quotient of $C(\Hnfc)$.

 Note that the elements $u_{ij}^2$ are projections due to (iii) and (iv):
\[u_{ij}^2=u_{ij}^2\left(\sum_k u_{ik}^2\right)=\sum_k u_{ij}^2u_{ik}^2=u_{ij}^4\]
Hence, the $u_{ij}$ are partial isometries.
\end{proof}

We now describe the $C^*$-algebras corresponding to the quantum groups $H_n^{[\pi_k]}$, denoted by $A_{\pi_k}(n)$. The preceding proposition is just a special case.

\begin{proposition}\label{PropCStarPik}
 Let $k\in\{2,3,\ldots,\infty\}$. The C$^*$-algebra $A_{\pi_k}(n)$ associated to the category $\langle\pi_l,l\leq k\rangle$ (and to the quantum group $H_n^{[\pi_k]}$) is the universal unital C$^*$-algebra generated by elements $u_{ij}$ such that the following relations hold:
 \begin{itemize}
  \item[(i)] $u_{ij}^*=u_{ij}$ for all $i,j$
  \item[(ii)] For all $2\leq l\leq k$ and all multi-indices $i$ and $j$ of length $l$ the elements $(u_{i_2j_2}\ldots u_{i_lj_l}^2\ldots u_{i_2j_2})$ and $u_{i_1j_1}^2$ commute.
  \item[(iii)] $u_{ik}u_{jk}=u_{ki}u_{kj}=0$ for all $i,j,k$ with $i\neq j$
  \item[(iv)] $\sum_k u_{ik}^2=\sum_k u_{ki}^2=1$ for all $i$
 \end{itemize}
In particular, the elements $u_{ij}$ are partial isometries.
\end{proposition}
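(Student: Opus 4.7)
The plan is to follow the template of Proposition \ref{PropCStarHnfc}, the only novelty being the translation of the intertwiner condition for a suitable rotation of $\pi_l$ into relation (ii). First we note that $\langle\pi_l,l\leq k\rangle$ contains $\paarpart$, $\idpart$ and $\vierpart=\pi_1$ (the last obtained from $\pi_2$ by composition with the pair partition, as observed in Section \ref{SectStructure}). Exactly as in \cite{Weber13} and Proposition \ref{PropCStarHnfc}, the fact that $T_\paarpart$, $T_\idpart$ and $T_\vierpart$ lie in the intertwiner spaces of $H_n^{[\pi_k]}$ forces relations (i), (iii) and (iv) in $C(H_n^{[\pi_k]})$.

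Next, for each $2\leq l\leq k$ we take the rotation $\widetilde\pi_l\in P(2l,2l)$ displayed in Section \ref{SectStructure}, with upper row $a_1a_1a_2a_3\cdots a_la_l\cdots a_3a_2$ and lower row $a_2a_3\cdots a_la_l\cdots a_3a_2a_1a_1$. Evaluating the identity $(T_{\widetilde\pi_l}\otimes 1)u^{\otimes 2l}=u^{\otimes 2l}(T_{\widetilde\pi_l}\otimes 1)$ on a standard basis vector and collapsing the sums using (iii), one obtains surviving contributions on one side of the form
\[
u_{i_1j_1}^{2}\,\bigl(u_{i_2j_2}u_{i_3j_3}\cdots u_{i_lj_l}^{2}\cdots u_{i_3j_3}u_{i_2j_2}\bigr),
\]
whereas on the other side the factor $u_{i_1j_1}^{2}$ appears on the opposite end of the palindromic product. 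Equality for all multi-indices is therefore equivalent to relation (ii). (For $l=2$, since $u_{ij}^{2}$ is a projection by (iii) and (iv), we have $u_{ij}^{4}=u_{ij}^{2}$, and the relation recovers the one in Proposition \ref{PropCStarHnfc}.)

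For the converse direction, let $A$ be the universal $C^*$-algebra given by (i)-(iv). Relations (i), (iii) and (iv) imply $uu^t=u^tu=I$, so $u$ is orthogonal. The candidate comultiplication $\Delta(u_{ij})=\sum_s u_{is}\otimes u_{sj}$ preserves (i), (iii) and (iv) as in the free hyperoctahedral case; it preserves (ii) as well, since expanding $\Delta(u_{ab}^2)=\sum_{s,t}u_{as}u_{at}\otimes u_{sb}u_{tb}$ and using (iii) collapses this to $\sum_s u_{as}^2\otimes u_{sb}^2$, so the tensored palindromic expressions commute tensor-factor-wise. Hence $A=C(G)$ for a compact matrix quantum group $G$ whose intertwiner spaces contain $T_p$ for each generating partition $p\in\{\paarpart,\idpart,\vierpart,\pi_2,\ldots,\pi_k\}$ by the computations above, and so for every $p\in\langle\pi_l,l\leq k\rangle$ by closure under the category operations. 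By Woronowicz's Tannaka-Krein duality this gives $G\subset H_n^{[\pi_k]}$, i.e.\ a surjection $C(H_n^{[\pi_k]})\twoheadrightarrow A$ which, combined with the first direction, yields $A=C(H_n^{[\pi_k]})$. The partial-isometry assertion follows verbatim as in Proposition \ref{PropCStarHnfc}.

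The main obstacle will be the combinatorial bookkeeping in the second step: while the pattern mirrors the $l=2$ case handled in Proposition \ref{PropCStarHnfc}, the nested structure of $\widetilde\pi_l$ requires careful tracking of indices to confirm that (iii) cancels exactly the off-diagonal contributions, leaving precisely the palindromic product whose commutation with $u_{i_1j_1}^{2}$ is what (ii) asserts.
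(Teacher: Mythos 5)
Your overall strategy coincides with the paper's: translate the intertwiner condition for a rotated version of $\pi_l$ into relation (ii), then use the universal property and Tannaka--Krein duality for the converse inclusion. However, there is a genuine gap at the central step. When you expand $u^{\otimes 2l}(T_{\widetilde\pi_l}\otimes 1)$ on a basis vector, the sum does \emph{not} collapse to a single diagonal sum: you obtain a double sum over multi-indices $\alpha,\beta$ attached to distinct basis vectors, with coefficients
\[(u_{\alpha_2i_2}\ldots u_{\alpha_li_l}u_{\beta_li_l}\ldots u_{\beta_2i_2})(u_{\beta_1i_1}u_{\alpha_1i_1}),\]
whereas $(T_{\widetilde\pi_l}\otimes 1)u^{\otimes 2l}$ produces only the terms with $\alpha=\beta$. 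Comparing coefficients, the intertwiner condition is therefore equivalent not to (ii) alone but to (ii) \emph{together with} the vanishing of all off-diagonal coefficients with $\alpha\neq\beta$. Relation (iii) kills these only when $\alpha_1\neq\beta_1$ or $\alpha_l\neq\beta_l$, because only then are the two mismatched factors adjacent in the product. If $\alpha$ and $\beta$ agree at the outermost positions but differ at some interior position $s$, the factors $u_{\alpha_si_s}$ and $u_{\beta_si_s}$ are separated by an inner palindrome and (iii) does not apply directly; so your assertion that ``(iii) cancels exactly the off-diagonal contributions'' is false as stated.

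The paper closes precisely this gap. It introduces auxiliary relations (ii)$'$ recording the full coefficient comparison (including the off-diagonal vanishing), shows that (ii)$'$ is literally equivalent to $T_{\pi_l'}$ being an intertwiner, and then proves that (i)--(iv) imply (ii)$'$: one takes the largest index $s$ with $\alpha_s\neq\beta_s$, uses (ii) for the shorter inner palindrome to rewrite it as a product of squares and to commute $u_{\beta_si_s}^2$ across it, and uses the partial isometry identity $u_{\beta_si_s}=u_{\beta_si_s}^3$ to produce the adjacent factor $u_{\alpha_si_s}u_{\beta_si_s}=0$ via (iii). Without this argument your converse direction --- that the universal algebra defined by (i)--(iv) admits all the maps $T_{\pi_l'}$ as intertwiners --- is unproved. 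The same issue resurfaces in your comultiplication check: expanding $\Delta$ of the palindromic product again produces off-diagonal cross terms whose vanishing must be established before the tensor factors can be said to commute; this is one reason the paper works with (ii)$'$, a relation of intertwiner type that is automatically compatible with $\Delta$, rather than with (ii) directly.
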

\begin{proof}
 For elements $u_{ij}, 1\leq i,j\leq n$ we define the relations (ii)' by:
\begin{itemize}
 \item[(ii)'] For all $2\leq l\leq k$ and all multi-indices $i$ and $j$ of length $l$ we have:
\begin{align*}
&\delta_{ij}(u_{\alpha_2i_2}\ldots u_{\alpha_li_l}u_{\beta_li_l}\ldots u_{\beta_2i_2})(u_{\beta_1i_1}u_{\alpha_1i_1})\\
=\;&\delta_{\alpha\beta}(u_{\alpha_1j_1}u_{\alpha_1i_1})(u_{\alpha_2i_2}\ldots u_{\alpha_li_l}u_{\alpha_lj_l}\ldots u_{\alpha_2j_2})
\end{align*}
\end{itemize}
Here,  $\delta_{ij}$ is shorthand for the product $\delta_{i_1j_1}\ldots \delta_{i_lj_l}$ and likewise $\delta_{\alpha\beta}$.

We first prove that the universal C$^*$-algebra $A$ generated by elements $u_{ij}$ and the relations (i), (ii), (iii) and (iv) is isomorphic to the C$^*$-algebra $B$ generated by (i), (ii)', (iii) and (iv). Putting $i=j$ and $\alpha=\beta$ in (ii)', we immediately infer that $B$ is a quotient of $A$ under the map sending generators to generators. Conversely, the relations (ii)' with $i=j$ and $\alpha=\beta$ follow directly from (ii). For $i\neq j$ and $\alpha\neq \beta$ nothing is to prove. So let $i=j$ and $\alpha\neq \beta$. We have to prove that 
\[(u_{\alpha_2i_2}\ldots u_{\alpha_li_l}u_{\beta_li_l}\ldots u_{\beta_2i_2})(u_{\beta_1i_1}u_{\alpha_1i_1})=0\]
holds in $A$. If $\alpha_1\neq\beta_1$ or $\alpha_l\neq\beta_l$, the above equation is true due to (iii). Otherwise, there is an index $1<s<l$ such that $\alpha_s\neq\beta_s$ and $\alpha_t=\beta_t$ for all $s<t\leq l$. Since $u_{\beta_si_s}$ is a partial isometry and using the relations (ii), we infer:
\begin{align*}
&u_{\alpha_si_s}(u_{\alpha_{s+1}i_{s+1}}\ldots u_{\alpha_li_l}u_{\beta_li_l}\ldots u_{\beta_{s+1}i_{s+1}})u_{\beta_si_s}\\
&=u_{\alpha_si_s}(u_{\alpha_{s+1}i_{s+1}}\ldots u_{\alpha_li_l}^2\ldots u_{\alpha_{s+1}i_{s+1}})u_{\beta_si_s}\\
&=u_{\alpha_si_s}(u_{\alpha_{s+1}i_{s+1}}\ldots u_{\alpha_li_l}^2\ldots u_{\alpha_{s+1}i_{s+1}})u_{\beta_si_s}^3\\
&=u_{\alpha_si_s}u_{\beta_si_s}^2(u_{\alpha_{s+1}i_{s+1}}\ldots u_{\alpha_li_l}^2\ldots u_{\alpha_{s+1}i_{s+1}})u_{\beta_si_s}\\
&=0
\end{align*}
The last step follows from (iii) which yields $u_{\alpha_si_s}u_{\beta_si_s}=0$.

We claim that the relations (ii)' are equivalent to the fact that the following auxilary partitions $\pi_l'$ are in the intertwiner spaces, for $2\leq l\leq k$.
\begin{align*}
\pi_l' &:= \begin{pmatrix} 
a_1 &a_1 &a_2 &\ldots &a_l &a_l &\ldots &a_2\\
a_2 &\ldots &a_l &a_l &\ldots &a_2 &a_1 &a_1 \end{pmatrix}
\end{align*}
This partition is a rotated version of $\pi_l$. Thus, $\pi_l\in\CC$ if and only if    $\pi'_l\in\CC$. We prove that $T_{\pi_l'}u^{\otimes 2l}=u^{\otimes 2l}T_{\pi_l'}$ if and only if the relations (ii)' hold.
The map $T_{\pi_l'}$ satisfies:
 \[T_{\pi_l'}((e_{j_1}\otimes e_{i_1})\otimes e_{i_2}\otimes \ldots\otimes e_{i_l}\otimes e_{j_l}\otimes \ldots\otimes e_{j_2})=\delta_{ij}e_{i_2}\otimes \ldots\otimes e_{i_l}\otimes e_{i_l}\otimes \ldots\otimes e_{i_2}\otimes(e_{i_1}\otimes e_{i_1}) \]
 We then compute:
 \begin{align*}
  u^{\otimes 2l}(T_{\pi_l'}\otimes 1)&(((e_{j_1}\otimes e_{i_1})\otimes e_{i_2}\otimes \ldots\otimes e_{i_l}\otimes e_{j_l}\otimes \ldots\otimes e_{j_2})\otimes 1)\\
  &= \delta_{ij}\sum_{\alpha\beta} e_{\alpha_2}\otimes \ldots\otimes e_{\alpha_l}\otimes e_{\beta_l}\otimes \ldots\otimes e_{\beta_2}\otimes e_{\beta_1}\otimes e_{\alpha_1} \\
 &\qquad\qquad\otimes(u_{\alpha_2 i_2}\ldots u_{\alpha_l i_l}u_{\beta_l i_l}\ldots u_{\beta_2 i_2})(u_{\beta_1 i_1}u_{\alpha_1 i_1})\\
  (T_{\pi_l'}\otimes 1)u^{\otimes 2l}&((e_{j_1}\otimes e_{i_1})\otimes e_{i_2}\otimes \ldots\otimes e_{i_l}\otimes e_{j_l}\otimes \ldots\otimes e_{j_2})\otimes 1)\\
  &= \sum_{\alpha\beta} (T_{\pi_l'}\otimes 1)e_{\beta_1}\otimes e_{\alpha_1}\otimes e_{\alpha_2}\otimes \ldots\otimes e_{\alpha_l}\otimes e_{\beta_l}\otimes \ldots\otimes e_{\beta_2}\\
  &\qquad\qquad\otimes (u_{\beta_1 j_1}u_{\alpha_1 i_1})  (u_{\alpha_2 i_2}\ldots u_{\alpha_l i_l}u_{\beta_l j_l}\ldots u_{\beta_2 j_2})\\
  &= \sum_{\alpha}  e_{\alpha_2}\otimes \ldots\otimes e_{\alpha_l}\otimes e_{\alpha_l}\otimes \ldots\otimes e_{\alpha_2}\otimes (e_{\alpha_1}\otimes e_{\alpha_1})\\
  &\qquad\qquad\otimes (u_{\alpha_1 j_1}u_{\alpha_1 i_1})  (u_{\alpha_2 i_2}\ldots u_{\alpha_l i_l}u_{\alpha_l j_l}\ldots u_{\alpha_2 j_2})
 \end{align*} 
Comparison of the coefficients yields the claim.
\end{proof}

It remains to show, that the inclusions
\[\langle\fatcrosspart\rangle\subset\langle\pi_l,l\leq 3\rangle
\subset\langle\pi_l,l\leq 4\rangle\subset\ldots\subset\langle \pi_k, k\in\N\rangle\subset \langle\primarypart\rangle\]

are strict. In general, if $\CC\subset\mathcal D$ are two categories with associated quantum groups $G(\CC)$ and $G(\mathcal D)$ and associated C$^*$-algebras $A_\CC(n)$ and $A_{\mathcal D}(n)$, we have a surjective homomorphism $\phi:A_\CC(n)\to A_{\mathcal D}(n)$, mapping generators to generators. Thus, if we find a representation $\sigma:A_\CC(n)\to B(H)$ on some Hilbert space $H$ such that the elements $\sigma(u_{ij})$ do \emph{not} fulfill the relations of $A_{\mathcal D}(n)$, the homomorphism $\phi$ cannot be an isomorphism. This proves $\CC\neq\mathcal D$.

\begin{proposition}\label{PiKDifferent1}
 Let $k\in\N$, $k\geq 2$. There is a dimension  $n_k\in\N$, a representation $\sigma_k: A_{\pi_k}(n_k)\to B(H_k)$ and multi-indices $i$ and $j$ of length $k+1$ such that the elements $\sigma_k(u_{i_2j_2}\ldots u_{i_{k+1}j_{k+1}}^2\ldots u_{i_2j_2})$ and $\sigma_k(u_{i_1j_1}^2)$ do \emph{not} commute.
In particular, the categories $\langle\pi_l,l\leq k\rangle$ for $k\in\{2,3,\ldots,\infty\}$ are mutually different.
\end{proposition}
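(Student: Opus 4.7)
My plan is to exhibit, for each $k\geq 2$, a finite-dimensional Hilbert space representation of $A_{\pi_k}(n_k)$ in which the depth-$(k{+}1)$ commutation demanded by $\pi_{k+1}$ visibly fails. The intuition is that the defining relations of $A_{\pi_k}(n_k)$ given in Proposition \ref{PropCStarPik} only force commutation between squares $u_{ij}^2$ and conjugates of squares of ``depth'' at most $k{-}1$, so there is room for a genuinely deeper commutator to survive. A natural way to realise this is via a wreath-product-type construction: choose a finite group $G_k$ generated by involutions $g_1,\ldots,g_m$ such that the $k$-fold iterated commutator pattern $g_{\alpha_2}\cdots g_{\alpha_k}g_{\alpha_2}$ is always central (or at least commutes with every $g_{\alpha_1}$), whereas some $(k{+}1)$-fold pattern $g_{\alpha_2}\cdots g_{\alpha_{k+1}}\cdots g_{\alpha_2}$ does not commute with $g_{\alpha_1}$. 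Nilpotent quotients of $\Z_2^{*(k+1)}$ (e.g.\ $D_4$ for $k=2$, and their higher-class analogues in general) are natural candidates.

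Concretely, I would take $n_k=k{+}1$ and work on $H_k = \ell^{2}(G_k)\otimes\C^{n_k}\otimes\ell^{2}(S_{n_k})$, defining
\[
 \sigma_k(u_{ij})(\xi\otimes e_j\otimes\delta_\sigma)
   \;=\;\chi_{\{\sigma(i)=j\}}\cdot\bigl(\rho(g_{ij})\xi\bigr)\otimes e_i\otimes\delta_\sigma
\]
(and vanishing on other basis vectors), where $\rho$ is the left-regular representation of $G_k$ and $(i,j)\mapsto g_{ij}$ is an assignment of involutions chosen so that the desired $\pi_k$-depth structure is encoded in $G_k$. The easy relations are immediate: self-adjointness follows from $g_{ij}^{2}=1$; the orthogonality $u_{ij}u_{kj}=u_{ki}u_{kj}=0$ for $i\neq k$ comes from disjointness of the conditions $\sigma(i)=j$ and $\sigma(k)=j$; and $\sum_k u_{ik}^2=\sum_k u_{ki}^2 = 1$ holds because each row/column of a permutation contains a unique prescribed index.

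The main verification, carried out via the reformulation (ii)' in the proof of Proposition \ref{PropCStarPik}, is that for every $2\leq l\leq k$ and every pair of multi-indices of length $l$,
\[
 \sigma_k\bigl(u_{i_2j_2}\cdots u_{i_lj_l}^{2}\cdots u_{i_2j_2}\bigr)
\quad\text{commutes with}\quad
 \sigma_k\bigl(u_{i_1j_1}^{2}\bigr).
\]
Unwinding the definition, the left-hand element acts on the $G_k$-factor by left multiplication by the word $g_{i_2j_2}\cdots g_{i_lj_l}^{2}\cdots g_{i_2j_2}$, which by the choice of $G_k$ lies in the centraliser of every $g_{i_1j_1}$; the combinatorial cancellations on the $\C^{n_k}\otimes\ell^2(S_{n_k})$ factor proceed as in the proof of Proposition \ref{PropCStarPik}. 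Conversely, at depth $k{+}1$, I would select indices so that the corresponding iterated word in the $g_{ij}$'s realises a non-central lower-central-series element of $G_k$, whence its left-multiplication operator does not commute with some $\rho(g_{i_1j_1})$, and hence $\pi_{k+1}$ fails in $\sigma_k$.

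The hard part, and the source of all the delicate bookkeeping, is producing the assignment $(i,j)\mapsto g_{ij}$ that simultaneously enforces all the required depth-$k$ central commutation (for every choice of multi-indices, not just a single one) and provides at least one depth-$(k{+}1)$ witness of non-commutation; the obvious first attempts, where $g_{ij}$ depends only on $i$, collapse the squares $u_{ij}^{2}$ to scalar projections and trivially satisfy all $\pi_l$, so the assignment must genuinely depend on both $i$ and $j$ and exploit the permutation structure of $S_{n_k}$. Once this combinatorial dependence is fixed, strictness of the whole chain $\langle\fatcrosspart\rangle\subsetneq\langle\pi_l,l\leq 3\rangle\subsetneq\dotsb\subsetneq\langle\pi_l,l\in\N\rangle$ follows formally, since a surjection $A_{\pi_k}(n_k)\twoheadrightarrow A_{\pi_{k+1}}(n_k)$ composed with $\sigma_k$ would force the $\pi_{k+1}$-relation in the image.
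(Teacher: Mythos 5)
Your proposal is not a proof: the construction of the group $G_k$ and of the assignment $(i,j)\mapsto g_{ij}$, which you yourself identify as ``the hard part,'' is exactly the content of the proposition and is never carried out. Worse, there is a structural reason why the group-algebra route cannot work as described. Since each $g_{ij}$ is an involution, the word $g_{i_2j_2}\cdots g_{i_{k+1}j_{k+1}}^{2}\cdots g_{i_2j_2}$ telescopes to the identity of $G_k$, so the candidate element $\sigma_k(u_{i_2j_2}\cdots u_{i_{k+1}j_{k+1}}^{2}\cdots u_{i_2j_2})$ acts trivially on the $\ell^2(G_k)$ factor no matter how deep the lower central series of $G_k$ is; the group factor can therefore never witness the failure of commutation with $\sigma_k(u_{i_1j_1}^2)$. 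The remaining factors $\C^{n_k}\otimes\ell^2(S_{n_k})$ carry only diagonal projections conjugated by partial permutations, which commute among themselves, so no witness arises there either. This is not an accident: a representation modelled on $C^*(G_k)\Join C(S_{n_k})$ is of the group-theoretical type of \cite{RaumWeberSemiDirect}, where the squares $u_{ij}^2$ are central, whereas the whole point of the categories $\langle\pi_l,l\leq k\rangle$ is that they are \emph{not} group-theoretical (cf.\ Remark \ref{PiKDifferent2}, which separates $\langle\pi_l,l\leq\infty\rangle$ from $\langle\primarypart\rangle$ precisely by exhibiting a non-central square). As a side remark, the formula you wrote also violates relation (i) of Proposition \ref{PropCStarPik}: as defined, $\sigma_k(u_{ij})$ maps the $e_j$-slice to the $e_i$-slice and vanishes elsewhere, so it is not self-adjoint, and $\sigma_k(u_{ij})^2=0$ for $i\neq j$, contradicting (iv).

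The mechanism actually needed is conjugation of \emph{projections}, not of group elements. The paper takes $H=\C^2\otimes\C^k$, self-adjoint unitaries $v_1,\dots,v_{k-1}$ swapping adjacent coordinates of $\C^k$, and two projections $p=\mathring p\otimes p_1$ and $q=\mathring q\otimes p_k$, where $\mathring p,\mathring q$ are noncommuting projections on $\C^2$. Then $p$ and $q$ commute because their $\C^k$-supports are orthogonal, and every conjugate $v_i\cdots v_1pv_1\cdots v_i=\mathring p\otimes p_{i+1}$ arising at depth at most $k$ still commutes with $q$; only the full chain of $k-1$ swaps, i.e.\ depth $k+1$, moves $p$ onto the support $p_k$ of $q$, where the noncommutativity of $\mathring p$ and $\mathring q$ finally appears. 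Arranging $v_1,\dots,v_{k-1},p,1-p,q,1-q$ into a block matrix of size $n_k=k+3$ gives the representation, and relation (ii) is verified by a short induction on $l$. If you want to salvage your approach, you must replace the group data by projection data that is genuinely displaced under conjugation by the generators; that is essentially what this construction does.
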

\begin{proof}
Consider $H:=\C^2\otimes \C^k$ and let $p_1,\ldots,p_k$ be the projections onto the subspaces $\C e_1,\ldots,\C e_k$ of $\C^k$ respectively. Here, $e_1,\ldots,e_k$ is the canonical basis of $\C^k$. For $i=1,\ldots, k-1$, let $\hat v_i\in B(\C^k)$ be the operator swapping the basis vectors $e_i$ and $e_{i+1}$ and leaving all other basis vectors invariant. That is, we define:
\[\hat v_i e_j:=
\begin{cases} 
 e_{i+1} &\textnormal{ if $j=i$}\\
 e_i &\textnormal{ if $j=i+1$}\\
 e_j &\textnormal{ otherwise}            
\end{cases}\]
Furthermore, let $\mathring p$ and $\mathring q$ be two noncommuting projections in $B(\C^2)$, for instance:
\[\mathring p:=\begin{pmatrix} 1 & 0\\ 0& 0\end{pmatrix} \qquad \textnormal{and} \qquad 
\mathring q:=\frac{1}{2}\begin{pmatrix} 1 & 1\\ 1& 1\end{pmatrix}\]
We define operators in $B(H)$ by the following:
\[v_i:= 1\otimes \hat v_i, \textnormal{ for } i=1,\ldots,k-1 \qquad p:=\mathring p\otimes p_1,\qquad q:=\mathring q\otimes p_k\]
Note that the projections $p$ and $q$ commute (they are orthogonal) whereas $\mathring p$ and $\mathring q$ do not. Furthermore, we have $v_i=v_i^*$, $v_i^2=1$ and $v_i\ldots v_1pv_1\ldots v_i=\mathring p\otimes p_{i+1}$.
We show that for $n:=k+3$ the following matrix $(u'_{ij})_{1\leq i,j\leq n}$ gives rise to a representation $\sigma_k$ of $A_{\pi_k}(n)$ on $H$, mapping $u_{ij}$ to $u_{ij}'$.
\[(u'_{ij})_{1\leq i,j\leq n}:= 
\begin{pmatrix}
v_1\\
& \ddots\\
& & v_{k-1}\\
& & &p &1-p\\
& & &1-p &p\\
& & & & &q &1-q\\
& & & & &1-q &q
\end{pmatrix}\]
Here, all other entries are zero. We immediately see that the elements $u_{ij}'$ fulfill (i), (iii) and (iv) of the relations of Proposition \ref{PropCStarPik}. As for (ii), let $2\leq l\leq k$. We prove by induction on $l$ that the elements $(u'_{i_2j_2}\ldots u_{i_lj_l}'^2\ldots u'_{i_2j_2})$ and $u_{i_1j_1}'^2$ commute for all choices of elements $u_{i_sj_s}'$. The claim is true for $l=2$. For the induction step, we may assume that all elements $u_{i_sj_s}'$ are nonzero and $u_{i_1j_1}'$ and $u_{i_lj_l}'$ are in $\{p,1-p,q,1-q\}$ (for the assumption on $u_{i_lj_l}'$ we use the induction hypothesis).

\textbf{\emph{Case 1.} The elements $u_{i_2j_2}',\ldots,u_{i_{l-1}j_{l-1}}'$ are all in $\{v_1,\ldots,v_{k-1}\}$.}

If $u_{i_lj_l}'=p$, the element $(u'_{i_2j_2}\ldots u_{i_lj_l}'^2\ldots u'_{i_2j_2})$ is of the form $\mathring p\otimes p_r$ for some $r$ with  $1\leq r\leq l-1\leq k-1$. Hence it commutes with $p$ and $q$, thus also with $u_{i_1j_1}'^2\in\{p,1-p,q,1-q\}$. If $u_{i_lj_l}'=q$, we have $(u'_{i_2j_2}\ldots u_{i_lj_l}'^2\ldots u'_{i_2j_2})=\mathring q\otimes p_r$ for some $r$ with  $2\leq k-(l-2)\leq r\leq k$. Hence it commutes with $u_{i_1j_1}'^2$. This also proves the cases $u_{i_lj_l}'=1-p$ and $u_{i_lj_l}'=1-q$.

\textbf{\emph{Case 2.} There is a $u_{i_sj_s}'$ for some $2\leq s\leq l-1$ such that $u_{i_sj_s}'\in\{p,1-p,q,1-q\}$.}

In this case, the element $(u'_{i_2j_2}\ldots u_{i_lj_l}'^2\ldots u'_{i_2j_2})$ is of the form:
\[(u'_{i_2j_2}\ldots u_{i_lj_l}'^2\ldots u'_{i_2j_2})=XaYbY^*aX^*\]
Here, $X$ and $Y$ are some products of elements $u_{i_sj_s}'$ and the word $Y$ has length less than $l-2$. Furthermore $a,b\in\{p,1-p,q,1-q\}$. Like in case 1, we deduce that $aYbY^*a$ is in $\{p,q,YbY^*,0\}$ if $b\in\{p,q\}$. Hence we can apply the induction hypothesis. The same holds for the case $b\in\{1-p,1-q\}$.

We have shown that (ii) of Proposition \ref{PropCStarPik} is fulfilled, hence the assignment $u_{ij}\mapsto u_{ij}'$ is a representation of $A_{\pi_k}(n)$. Now, let $u_{i_1j_1}':=q=\mathring q\otimes p_k$ and put $u_{i_2j_2}',\ldots, u_{i_{k+1}j_{k+1}}'$. Then:
\[u_{i_2j_2}'\ldots u_{i_kj_k}'u_{i_{k+1}j_{k+1}}'^2u_{i_kj_k}'\ldots u_{i_2j_2}'=v_{k-1}\ldots v_2v_1pv_1v_2\ldots v_{k-1}=\mathring p\otimes p_k\]
So the elements $u_{i_2j_2}'\ldots u_{i_{k+1}j_{k+1}}'^2\ldots u_{i_2j_2}'$ and $u_{i_1j_1}'$ do \emph{not} commute, since $\mathring p$ and $\mathring q$ do not commute.

Now, consider the surjections $A_{\pi_k}(n)\to A_{\pi_{k+1}}(n)$ mapping generators to generators. Since $\sigma_k$ is a representation of $A_{\pi_k}(n)$ but not of $A_{\pi_{k+1}}(n)$, the categories  $\langle\pi_l,l\leq k\rangle$ and $\langle\pi_l,l\leq k+1\rangle$ do not coincide, for all $k\in\N, k\geq 2$.  
It follows that $\langle\pi_l,l\leq k\rangle\neq\langle\pi_l,l\leq \infty\rangle$ for all $k\in\N, k\geq 2$. 
\end{proof}

\begin{remark}\label{PiKDifferent2}
 The following representation $\sigma_{\infty}:A_{\pi_\infty}(n)\to B(H_\infty)$ shows that $\langle\pi_l,l\leq \infty\rangle\neq \langle\primarypart\rangle$. Let $H_\infty:= \C^2$ and put
\[p:=\begin{pmatrix} 1 & 0\\ 0& 0\end{pmatrix} \qquad \textnormal{and} \qquad 
w:=\begin{pmatrix} 0 & 1\\ 1& 0\end{pmatrix}\;.\]
Then the assignment
\[(u_{ij})\mapsto \begin{pmatrix} p & 1-p\\ 1-p&p\\0& 0&w\end{pmatrix}\]
is a representation of $A_{\pi_\infty}(3)$ where the images of $u_{11}^2$ and $u_{33}$ do \emph{not} commute (but this is the case in the C$^*$-algebra associated to $\langle\primarypart\rangle$, see also \cite{RaumWeberSimpl}). Thus, all $\langle\pi_l,l\leq k\rangle$ are non group-theoretical.
\end{remark}

\begin{remark}\label{PiKDifferent3}
Since $\langle\primarypart\rangle\subset\langle\vierpart,\crosspart\rangle$ and all partitions in  $\langle\vierpart,\crosspart\rangle$  have blocks of even size (see \cite{BanicaSpeicher09}), we infer that
 $\singleton\otimes\singleton\notin\langle\primarypart\rangle$.
This also proves that $\singleton\otimes\singleton$ is not contained in $\langle\pi_l,l\leq k\rangle$, hence these categories are indeed non group-theoretical and hyperoctahedral.
\end{remark}

%

\end{document}